\documentclass[oneside, a4paper,11pt,reqno]{amsart}

\textheight=23cm
\textwidth=16cm
\voffset=-1.5cm
\hoffset=-1.5cm

\usepackage[english]{babel}
\usepackage[colorlinks]{hyperref}
\usepackage{epsfig}
\usepackage{mathtools}
\usepackage{amsmath}

\usepackage{amsthm}

\usepackage{amssymb}
\usepackage{amscd}
\usepackage{tikz}
\usepackage{graphicx}
\usepackage{color}
\usepackage{verbatim}
\usepackage[utf8]{inputenc}
\usepackage{pgf,tikz}
\usepackage{mathrsfs}
\usetikzlibrary{arrows}

\newtheorem{thm}{Theorem}[section]
\newtheorem{lem}[thm]{Lemma}
\newtheorem{prop}[thm]{Proposition}
\newtheorem{fact}[thm]{Fact}

\newtheorem{hyp}[thm]{Hypothesis}

\newtheorem{cor}[thm]{Corollary}
\newtheorem{defn}[thm]{Definition}

\newtheorem{rem}[thm]{Remark}

\newcommand{\Z}{\mathbb{Z}}
\newcommand{\N}{\mathbb{N}}
\newcommand{\M}{\mathcal{M}}

\newcommand{\f}{\varphi}
\newcommand{\mup}{\overline{\mu}}
\newcommand{\tp}{\overline{T}}
\newcommand{\Mp}{\overline{M}}
\newcommand{\R}{\mathcal{R}}


\newcommand{\e}{\epsilon_0}

\newcommand{\Fb}{\overline F}



\title[Slow-fast systems in infinite measure, with or without averaging]{
Slow-fast systems in continuous time and infinite measure, with or without averaging
}
\address{Univ Brest,  CNRS  UMR 6205, Laboratoire de Mathématiques 
de Bretagne Atlantique.}
\author{Maxence Phalempin}

\begin{document}

\maketitle

\begin{abstract}
This paper studies the asymptotic behaviour of the solution of a differential equation perturbed by a fast flow preserving an infinite measure. This question is related with limit theorems for non-stationary Birkhoff integrals. 
We distinguish two settings with different behaviour: the integrable setting (no averaging phenomenon) and
the case of an additive "centered" perturbation term (averaging phenomenon). 
The paper is motivated by the case where the perturbation comes from the $\Z$-periodic Lorentz gas flow or from the geodesic flow over a $\Z$-cover of a negatively curved compact surface. 
We establish limit theorems in more general contexts. 
\end{abstract}

\section{introduction}
We call perturbed differential equation or equivalently slow fast system in continuous time the following Cauchy problem

\begin{align}\label{pertx}
\frac{dX^{\epsilon}_t(x_0,\omega)}{dt}=\tilde f(X^{\epsilon}_t(x_0,\omega),\f_{\frac t \epsilon }(\omega)) ,\;X_0^\epsilon=x_0\, ,\quad \forall t\in [0,S]
\end{align}
with $\tilde f :\mathbb{R}^d \times \M \rightarrow \mathbb{R}^d$ 
a bounded measurable function, uniformly Lipschitz in its first coordinate, 
and $(\f_s)_{s\geq 0}$ a flow preserving the measure $\mu$ over a measurable space $\M$. The term $\f_{t/\epsilon}$ represents the fast motion when $\epsilon \rightarrow 0^+$,
 whereas $X_t^\epsilon$ is considered as a slow motion. The behavior of $(X_t^\epsilon)_{t\geq 0}$ has been widely studied when the flow $\f_t$ preserves a probability measure and the resulting 
dynamical system is mixing, we can notably cite the work done for exponentially mixing flows and transformations as Anosov flows or  billiard transformations by Anosov \cite{anosovslowfast}, Arnold \cite{arnoldequadiff}, Khasminskii \cite{kasminski}, Kifer \cite{kifer} and Pène \cite{phdpene}, among others.
In these settings they proved that while the dynamics accelerates ($\epsilon \rightarrow 0^+$), the perturbed solution $(X_t^\epsilon)_{t\geq 0}$ converges to the averaged motion $(W_t)_{t\geq 0}$ solution of the following ordinary averaged differential equation
\begin{align}\label{equaw}
\frac{dW_t(x_0)}{dt}=\bar f(W_t(x_0))\, , \quad W_0(x_0)=x_0\, , \quad \forall t\in [0,S],
\end{align}
where $\bar f(x):=\int_{\M} \tilde f(x,.)d\nu$. They also identify (see \cite{kasminski}, \cite{phdpene}) the limit in distribution of the error term
\begin{equation}\label{error}
E_t^\epsilon(x_0,\omega):=X_t^\epsilon(x_0,\omega)-W_t(x_0)
\end{equation}
between the solution $W_t$ of the ordinary differential equation~\eqref{equaw} and the solution
$X_t^\epsilon$ of the perturbation~\eqref{pertx}
by $\varphi$ of equation~\eqref{equaw}. 
Other slow fast systems have been studied with fast motion belonging to less chaotic probability preserving dynamical system such as Pommeau Manneville dynamics  by Chevyrev, Friz, Korepanov and Melbourne in \cite{korepanovslowfast}.\\
Note that, in the particular case where $\tilde f(x,\omega)=\tilde f_0(\omega)$ does not depend on its first coordinate, then $X_t^\epsilon(x_0,\cdot)-x_0$ is simply 
the averaged Birkhoff integral $\epsilon\int_0^{t/\epsilon}f_0\circ \varphi_s\, ds$ and the error $E_t^\epsilon(x_0,\cdot)$ is then $\epsilon\int_0^{t/\epsilon}(\tilde f_0-\bar f)\circ \varphi_s\, ds$. Thus the study of $X_t^\epsilon$ and $E_t^\epsilon$ can be seen as a generalization of the study of Birkhoff integrals. 

Our motivation here is to study slow fast systems with the fast dynamics coming from a flow $(\varphi_t)_t$ preserving an infinite measure. A first study of perturbation by a infinite measure preserving dynamical systems has been done in~\cite{MP24discrete} in the discrete time context. In~\cite{MP24discrete}, we studied solutions of~\eqref{pertx} in which $\varphi_{t/\epsilon}$ was replaced by $T^{\lfloor t/\epsilon\rfloor}$ for some chaotic maps preserving an infinite measure
(such as the collision map in the $\mathbb Z$-periodic Lorentz gas).

We assume from now on that
$(\varphi_t)_t$ is a flow defined on some measurable space $\mathcal M$ and preserving an infinite ($\sigma$-finite) measure $\nu$. There are (at least) two natural settings to consider and that generalize naturally the above model from the probability measure preserving context to the infinite measure preserving context.

The first setting consists in considering the case where $\tilde f(x,\cdot)$ is integrable for all $x\in\mathbb R^d$. 
In this case, under natural assumptions, $(X_t^\epsilon(x_0,\cdot)-x_0)$ behaves mostly as $\int_{\mathcal M}\tilde f(x_0,\cdot)\, d\nu$  
as $\epsilon\rightarrow 0$ 
(see Theorem~\ref{thmexempleintegrable}, and Section~\ref{secintegrable} for a general result for flows, see also \cite{MPphd} for the discrete time dynamical counterpart with stronger assumptions and a control in some $L^1$-norm). In the study of this first setting, $W_t^\epsilon(x_0)$ (obtained by taking
this time only $\bar f(x_0)=\int_{\mathcal M}\tilde f(x_0,\cdot)\, d\nu$) does not play any role.

A second  natural setting, more difficult to study and to which most of the present paper is dedicated, is the case when  $\tilde f=f+\bar f$ is split into a 
integrable part $f$ and a drift $\bar f$ independent of the dynamics. In this situation the differential equation~\eqref{pertx} will appear as a perturbation
of the differential equation~\eqref{equaw} and we will establish a limit theorem for $E_t^\epsilon$ as in the context of chaotic probability preserving dynamical systems. 
A first study of the discrete time counterpart of this second setting has been done in~\cite{MP24discrete}. 
We will study this second setting 
for general family of models  $(\M,\nu,\varphi)$ (see Section~\ref{secZext0}) which includes the $\Z$-periodic Lorentz gas flow and also the geodesic flow on a $\mathbb Z$-cover of a compact negatively curved surface. 
A presentation of these two examples may be found in Section \ref{secmodels}. 
These two flows describe the behaviour of a point particle moving at unit velocity on a 
$\mathbb Z$-periodic surface $\mathcal R_0$ either with negative curvature (in the case of geodesic flow) or (in the case of the Lorentz gas flow) 
on the flat tube $\mathbb R\times\mathbb T$ 
deprived 
of a periodic set of "round" obstacles on which the particle is elastically reflected. The set of configurations $\mathcal M$ is then
the set of couples of position and unit velocity, that is the unit tangent bundle $\mathcal M=T^1\mathcal R_0$ (up to identifying pre- and post-collision vectors at a collision time) of the surface $\mathcal R_0$ in which the particle evolves. 
For both flows, $\varphi_t$ maps a couple position/unit-velocity to the couple position/unit-velocity after time $t$. These flows  preserve the Lebesgue (or Liouville) measure $\nu$. 
The Lorentz gas has been introduced by Lorentz in~\cite{Lorentz} to model the displacement of an electron in a weakly magnetic metal, the round obstacles modeling the atoms. 
An example of $\mathbb Z$-cover of negatively curved compact surface 
is the $\Z$-cover of the surface 
$\mathcal R\subset \mathbb R\times\mathbb T^2$ of equation $\cos(2\pi u)+\cos(2\pi v)+\cos(2\pi w)=0$. 

\begin{figure}[htbp!]
\definecolor{ffffff}{rgb}{1.,1.,1.}
\definecolor{qqwwzz}{rgb}{0.,0.4,0.6}
\definecolor{qqzzqq}{rgb}{0.,0.6,0.}
\definecolor{ccqqqq}{rgb}{0.8,0.,0.}
\begin{tikzpicture}[line cap=round,line join=round,>=triangle 45,x=0.38720631737471994cm,y=0.36753802264368196cm]
\clip(-14.417905401737666,-5.601309281127577) rectangle (13.508488657234448,5.486198798433184);
\draw [shift={(-5.,5.)},line width=2.pt]  plot[domain=-1.5707963267948966:0.,variable=\t]({1.*3.8*cos(\t r)+0.*3.8*sin(\t r)},{0.*3.8*cos(\t r)+1.*3.8*sin(\t r)});
\draw [shift={(-5.,-5.)},line width=2.pt]  plot[domain=0.:1.5707963267948966,variable=\t]({1.*3.8*cos(\t r)+0.*3.8*sin(\t r)},{0.*3.8*cos(\t r)+1.*3.8*sin(\t r)});
\draw [shift={(5.,-5.)},line width=2.pt]  plot[domain=1.5707963267948966:3.141592653589793,variable=\t]({1.*3.8*cos(\t r)+0.*3.8*sin(\t r)},{0.*3.8*cos(\t r)+1.*3.8*sin(\t r)});
\draw [shift={(5.,5.)},line width=2.pt]  plot[domain=3.141592653589793:4.71238898038469,variable=\t]({1.*3.8*cos(\t r)+0.*3.8*sin(\t r)},{0.*3.8*cos(\t r)+1.*3.8*sin(\t r)});
\draw [shift={(0.,6.748541905584902)},line width=2.pt,color=ccqqqq]  plot[domain=4.1109263962649605:5.313851564504419,variable=\t]({1.*2.120707145172685*cos(\t r)+0.*2.120707145172685*sin(\t r)},{0.*2.120707145172685*cos(\t r)+1.*2.120707145172685*sin(\t r)});
\draw [shift={(0.,3.1849587164306037)},line width=2.pt,color=ccqqqq]  plot[domain=0.9866282605050797:2.1549643930847138,variable=\t]({1.*2.1758618662638587*cos(\t r)+0.*2.1758618662638587*sin(\t r)},{0.*2.1758618662638587*cos(\t r)+1.*2.1758618662638587*sin(\t r)});
\draw [shift={(0.,-6.639510247152966)},line width=2.pt,color=ccqqqq]  plot[domain=0.93896334991882:2.202629303670973,variable=\t]({1.*2.0317465025242645*cos(\t r)+0.*2.0317465025242645*sin(\t r)},{0.*2.0317465025242645*cos(\t r)+1.*2.0317465025242645*sin(\t r)});
\draw [shift={(0.,-3.308013578821793)},line width=2.pt,color=ccqqqq]  plot[domain=4.095498169574483:5.329279791194896,variable=\t]({1.*2.074323516101439*cos(\t r)+0.*2.074323516101439*sin(\t r)},{0.*2.074323516101439*cos(\t r)+1.*2.074323516101439*sin(\t r)});
\draw [shift={(3.453995848431412,0.)},line width=2.pt,color=qqzzqq]  plot[domain=-0.6600559403422821:0.6600559403422824,variable=\t]({1.*1.9570714950321333*cos(\t r)+0.*1.9570714950321333*sin(\t r)},{0.*1.9570714950321333*cos(\t r)+1.*1.9570714950321333*sin(\t r)});
\draw [shift={(6.546004151568588,0.)},line width=2.pt,color=qqzzqq]  plot[domain=2.481536713247511:3.8016485939320757,variable=\t]({1.*1.9570714950321333*cos(\t r)+0.*1.9570714950321333*sin(\t r)},{0.*1.9570714950321333*cos(\t r)+1.*1.9570714950321333*sin(\t r)});
\draw [shift={(-6.201593228809644,0.)},line width=2.pt,color=qqzzqq]  plot[domain=-0.784734758555695:0.784734758555695,variable=\t]({1.*1.6981832314332825*cos(\t r)+0.*1.6981832314332825*sin(\t r)},{0.*1.6981832314332825*cos(\t r)+1.*1.6981832314332825*sin(\t r)});
\draw [shift={(-3.798406771190356,0.)},line width=2.pt,color=qqzzqq]  plot[domain=2.356857895034098:3.926327412145488,variable=\t]({1.*1.698183231433282*cos(\t r)+0.*1.698183231433282*sin(\t r)},{0.*1.698183231433282*cos(\t r)+1.*1.698183231433282*sin(\t r)});
\draw [shift={(1.537878564151371,3.4428554380431455)},line width=2.pt,color=ccqqqq]  plot[domain=2.8150550197602158:4.725722860250245,variable=\t]({1.*1.3207605113569272*cos(\t r)+0.*1.3207605113569272*sin(\t r)},{0.*1.3207605113569272*cos(\t r)+1.*1.3207605113569272*sin(\t r)});
\draw [line width=2.pt,color=qqzzqq] (0.2869088524857311,3.866509906749658)-- (0.049149734332857804,3.13162120677704);
\draw [line width=2.pt,color=qqzzqq] (0.049149734332857804,3.13162120677704)-- (0.04546628601518405,3.407879830602569);
\draw [line width=2.pt,color=qqzzqq] (0.049149734332857804,3.13162120677704)-- (0.20385456367515536,3.326843967613747);
\draw [line width=2.pt,color=qqzzqq] (1.5554889042995157,2.12221233550572)-- (2.326911741420443,2.183971966317168);
\draw [line width=2.pt,color=qqzzqq] (2.326911741420443,2.183971966317168)-- (2.1077316506737254,2.0922221608883107);
\draw [line width=2.pt,color=qqzzqq] (2.326911741420443,2.183971966317168)-- (2.0771483821974392,2.229846869031597);
\draw (-1.043929583900772,4.058986987332946) node[anchor=north west] {$x$};
\draw (1.847709579459238,1.5751430906007076) node[anchor=north west] {$\varphi_t(x)$};
\draw [line width=2.pt,color=qqwwzz] (-0.6584340669401274,-0.7749470911311843) ellipse (0.3686181449777437cm and 0.3498940436568077cm);
\draw [line width=2.pt,color=qqwwzz] (0.15009171636533897,0.20369590078153288) ellipse (0.3686181449777437cm and 0.3498940436568077cm);
\draw [shift={(0.15009171636533897,0.20369590078153288)},line width=2.pt,color=ffffff]  plot[domain=1.2532945355341802:1.4261206946553242,variable=\t]({1.*0.9519941396540091*cos(\t r)+0.*0.9519941396540091*sin(\t r)},{0.*0.9519941396540091*cos(\t r)+1.*0.9519941396540091*sin(\t r)});
\draw [shift={(0.15009171636533897,0.20369590078153288)},line width=2.pt,color=ffffff]  plot[domain=2.3416987030507435:2.525250187992933,variable=\t]({1.*0.951994139654009*cos(\t r)+0.*0.951994139654009*sin(\t r)},{0.*0.951994139654009*cos(\t r)+1.*0.951994139654009*sin(\t r)});
\draw [shift={(0.15009171636533897,0.20369590078153288)},line width=2.pt,color=ffffff]  plot[domain=3.09789858851906:3.279448970308834,variable=\t]({1.*0.9519941396540093*cos(\t r)+0.*0.9519941396540093*sin(\t r)},{0.*0.9519941396540093*cos(\t r)+1.*0.9519941396540093*sin(\t r)});
\draw [shift={(0.15009171636533897,0.20369590078153288)},line width=2.pt,color=ffffff]  plot[domain=3.8703838219144058:4.050715286329725,variable=\t]({1.*0.9519941396540093*cos(\t r)+0.*0.9519941396540093*sin(\t r)},{0.*0.9519941396540093*cos(\t r)+1.*0.9519941396540093*sin(\t r)});
\draw [shift={(0.15009171636533897,0.20369590078153288)},line width=2.pt,color=ffffff]  plot[domain=4.768408581387503:4.950891818015055,variable=\t]({1.*0.9519941396540093*cos(\t r)+0.*0.9519941396540093*sin(\t r)},{0.*0.9519941396540093*cos(\t r)+1.*0.9519941396540093*sin(\t r)});
\draw [shift={(0.15009171636533897,0.20369590078153288)},line width=2.pt,color=ffffff]  plot[domain=5.717248896782086:5.934395582020829,variable=\t]({1.*0.9519941396540091*cos(\t r)+0.*0.9519941396540091*sin(\t r)},{0.*0.9519941396540091*cos(\t r)+1.*0.9519941396540091*sin(\t r)});
\draw [shift={(0.15009171636533897,0.20369590078153288)},line width=2.pt,color=ffffff]  plot[domain=0.37257978132003017:0.536950688165803,variable=\t]({1.*0.951994139654009*cos(\t r)+0.*0.951994139654009*sin(\t r)},{0.*0.951994139654009*cos(\t r)+1.*0.951994139654009*sin(\t r)});
\draw [line width=2.pt,color=qqwwzz] (-0.6584340669401274,-0.7749470911311843) ellipse (0.3686181449777437cm and 0.3498940436568077cm);
\draw [shift={(5.,5.)},line width=2.pt]  plot[domain=3.141592653589793:4.71238898038469,variable=\t]({-1.*3.8*cos(\t r)+0.*3.8*sin(\t r)},{0.*3.8*cos(\t r)+1.*3.8*sin(\t r)});
\draw [shift={(5.,-5.)},line width=2.pt]  plot[domain=1.5707963267948966:3.141592653589793,variable=\t]({-1.*3.8*cos(\t r)+0.*3.8*sin(\t r)},{0.*3.8*cos(\t r)+1.*3.8*sin(\t r)});
\draw [shift={(10.,-6.6395102471529635)},line width=2.pt,color=ccqqqq]  plot[domain=0.93896334991882:2.202629303670973,variable=\t]({-1.*2.0317465025242645*cos(\t r)+0.*2.0317465025242645*sin(\t r)},{0.*2.0317465025242645*cos(\t r)+1.*2.0317465025242645*sin(\t r)});
\draw [shift={(10.,-3.3080135788217904)},line width=2.pt,color=ccqqqq]  plot[domain=4.095498169574483:5.329279791194896,variable=\t]({-1.*2.074323516101439*cos(\t r)+0.*2.074323516101439*sin(\t r)},{0.*2.074323516101439*cos(\t r)+1.*2.074323516101439*sin(\t r)});
\draw [shift={(15.,-5.)},line width=2.pt]  plot[domain=0.:1.5707963267948966,variable=\t]({-1.*3.8*cos(\t r)+0.*3.8*sin(\t r)},{0.*3.8*cos(\t r)+1.*3.8*sin(\t r)});
\draw [shift={(15.,5.)},line width=2.pt]  plot[domain=-1.5707963267948966:0.,variable=\t]({-1.*3.8*cos(\t r)+0.*3.8*sin(\t r)},{0.*3.8*cos(\t r)+1.*3.8*sin(\t r)});
\draw [shift={(10.,6.748541905584904)},line width=2.pt,color=ccqqqq]  plot[domain=4.1109263962649605:5.313851564504419,variable=\t]({-1.*2.120707145172685*cos(\t r)+0.*2.120707145172685*sin(\t r)},{0.*2.120707145172685*cos(\t r)+1.*2.120707145172685*sin(\t r)});
\draw [shift={(10.,3.1849587164306064)},line width=2.pt,color=ccqqqq]  plot[domain=0.9866282605050797:2.1549643930847138,variable=\t]({-1.*2.1758618662638587*cos(\t r)+0.*2.1758618662638587*sin(\t r)},{0.*2.1758618662638587*cos(\t r)+1.*2.1758618662638587*sin(\t r)});
\draw [line width=2.pt,color=qqwwzz] (9.341565933059872,-0.7749470911311815) ellipse (0.3686181449777437cm and 0.34989404365680776cm);
\draw [line width=2.pt,color=qqwwzz] (10.150091716365338,0.20369590078153565) ellipse (0.36861814497774353cm and 0.3498940436568076cm);
\draw [shift={(33.79840677119036,0.)},line width=2.pt,color=qqzzqq]  plot[domain=2.356857895034098:3.926327412145488,variable=\t]({-1.*1.698183231433282*cos(\t r)+0.*1.698183231433282*sin(\t r)},{0.*1.698183231433282*cos(\t r)+1.*1.698183231433282*sin(\t r)});
\draw [shift={(13.798406771190358,0.)},line width=2.pt,color=qqzzqq]  plot[domain=2.356857895034098:3.926327412145488,variable=\t]({-1.*1.698183231433282*cos(\t r)+0.*1.698183231433282*sin(\t r)},{0.*1.698183231433282*cos(\t r)+1.*1.698183231433282*sin(\t r)});
\draw [shift={(36.20159322880964,0.)},line width=2.pt,color=qqzzqq]  plot[domain=-0.784734758555695:0.784734758555695,variable=\t]({-1.*1.6981832314332825*cos(\t r)+0.*1.6981832314332825*sin(\t r)},{0.*1.6981832314332825*cos(\t r)+1.*1.6981832314332825*sin(\t r)});
\draw [shift={(16.201593228809646,0.)},line width=2.pt,color=qqzzqq]  plot[domain=-0.784734758555695:0.784734758555695,variable=\t]({-1.*1.6981832314332825*cos(\t r)+0.*1.6981832314332825*sin(\t r)},{0.*1.6981832314332825*cos(\t r)+1.*1.6981832314332825*sin(\t r)});
\draw [shift={(-5.,5.)},line width=2.pt]  plot[domain=-1.5707963267948966:0.,variable=\t]({-1.*3.8*cos(\t r)+0.*3.8*sin(\t r)},{0.*3.8*cos(\t r)+1.*3.8*sin(\t r)});
\draw [shift={(-5.,-5.)},line width=2.pt]  plot[domain=0.:1.5707963267948966,variable=\t]({-1.*3.8*cos(\t r)+0.*3.8*sin(\t r)},{0.*3.8*cos(\t r)+1.*3.8*sin(\t r)});
\draw [shift={(-10.,-6.639510247152965)},line width=2.pt,color=ccqqqq]  plot[domain=0.93896334991882:2.202629303670973,variable=\t]({-1.*2.0317465025242645*cos(\t r)+0.*2.0317465025242645*sin(\t r)},{0.*2.0317465025242645*cos(\t r)+1.*2.0317465025242645*sin(\t r)});
\draw [shift={(-10.,-3.3080135788217926)},line width=2.pt,color=ccqqqq]  plot[domain=4.095498169574483:5.329279791194896,variable=\t]({-1.*2.074323516101439*cos(\t r)+0.*2.074323516101439*sin(\t r)},{0.*2.074323516101439*cos(\t r)+1.*2.074323516101439*sin(\t r)});
\draw [shift={(-10.,6.7485419055849025)},line width=2.pt,color=ccqqqq]  plot[domain=4.1109263962649605:5.313851564504419,variable=\t]({-1.*2.120707145172685*cos(\t r)+0.*2.120707145172685*sin(\t r)},{0.*2.120707145172685*cos(\t r)+1.*2.120707145172685*sin(\t r)});
\draw [shift={(-10.,3.184958716430604)},line width=2.pt,color=ccqqqq]  plot[domain=0.9866282605050797:2.1549643930847138,variable=\t]({-1.*2.1758618662638587*cos(\t r)+0.*2.1758618662638587*sin(\t r)},{0.*2.1758618662638587*cos(\t r)+1.*2.1758618662638587*sin(\t r)});
\draw [shift={(-15.,5.)},line width=2.pt]  plot[domain=3.141592653589793:4.71238898038469,variable=\t]({-1.*3.8*cos(\t r)+0.*3.8*sin(\t r)},{0.*3.8*cos(\t r)+1.*3.8*sin(\t r)});
\draw [shift={(-15.,-5.)},line width=2.pt]  plot[domain=1.5707963267948966:3.141592653589793,variable=\t]({-1.*3.8*cos(\t r)+0.*3.8*sin(\t r)},{0.*3.8*cos(\t r)+1.*3.8*sin(\t r)});
\draw [shift={(-16.54600415156859,0.)},line width=2.pt,color=qqzzqq]  plot[domain=2.481536713247511:3.8016485939320757,variable=\t]({-1.*1.9570714950321333*cos(\t r)+0.*1.9570714950321333*sin(\t r)},{0.*1.9570714950321333*cos(\t r)+1.*1.9570714950321333*sin(\t r)});
\draw [shift={(-13.453995848431411,0.)},line width=2.pt,color=qqzzqq]  plot[domain=-0.6600559403422821:0.6600559403422824,variable=\t]({-1.*1.9570714950321333*cos(\t r)+0.*1.9570714950321333*sin(\t r)},{0.*1.9570714950321333*cos(\t r)+1.*1.9570714950321333*sin(\t r)});
\draw [line width=2.pt,color=qqwwzz] (-10.658434066940128,-0.7749470911311871) ellipse (0.36861814497774364cm and 0.34989404365680765cm);
\draw [line width=2.pt,color=qqwwzz] (-9.849908283634662,0.2036959007815301) ellipse (0.36861814497774364cm and 0.34989404365680765cm);
\draw [shift={(10.150091716365338,0.20369590078153565)},line width=2.pt,color=ffffff]  plot[domain=3.0810853260437403:3.2534719836399044,variable=\t]({1.*0.9519941396540086*cos(\t r)+0.*0.9519941396540086*sin(\t r)},{0.*0.9519941396540086*cos(\t r)+1.*0.9519941396540086*sin(\t r)});
\draw [shift={(10.150091716365338,0.20369590078153565)},line width=2.pt,color=ffffff]  plot[domain=4.797184521457376:4.909167490307397,variable=\t]({1.*0.9519941396540089*cos(\t r)+0.*0.9519941396540089*sin(\t r)},{0.*0.9519941396540089*cos(\t r)+1.*0.9519941396540089*sin(\t r)});
\draw [shift={(10.150091716365338,0.20369590078153565)},line width=2.pt,color=ffffff]  plot[domain=3.9274506309145725:4.032111176477976,variable=\t]({1.*0.9519941396540089*cos(\t r)+0.*0.9519941396540089*sin(\t r)},{0.*0.9519941396540089*cos(\t r)+1.*0.9519941396540089*sin(\t r)});
\draw [shift={(10.150091716365338,0.20369590078153565)},line width=2.pt,color=ffffff]  plot[domain=5.611771614766113:5.757602315325147,variable=\t]({1.*0.9519941396540088*cos(\t r)+0.*0.9519941396540088*sin(\t r)},{0.*0.9519941396540088*cos(\t r)+1.*0.9519941396540088*sin(\t r)});
\draw [shift={(10.150091716365338,0.20369590078153565)},line width=2.pt,color=ffffff]  plot[domain=0.06441334162510681:0.21436621656617055,variable=\t]({1.*0.951994139654008*cos(\t r)+0.*0.951994139654008*sin(\t r)},{0.*0.951994139654008*cos(\t r)+1.*0.951994139654008*sin(\t r)});
\draw [shift={(10.150091716365338,0.20369590078153565)},line width=2.pt,color=ffffff]  plot[domain=0.94481265163191:1.0646083663734986,variable=\t]({1.*0.9519941396540086*cos(\t r)+0.*0.9519941396540086*sin(\t r)},{0.*0.9519941396540086*cos(\t r)+1.*0.9519941396540086*sin(\t r)});
\draw [shift={(10.150091716365338,0.20369590078153565)},line width=2.pt,color=ffffff]  plot[domain=1.9839478026543094:2.098347056771017,variable=\t]({1.*0.9519941396540087*cos(\t r)+0.*0.9519941396540087*sin(\t r)},{0.*0.9519941396540087*cos(\t r)+1.*0.9519941396540087*sin(\t r)});
\draw [shift={(-9.849908283634662,0.2036959007815301)},line width=2.pt,color=ffffff]  plot[domain=3.125526756001007:3.233722789964543,variable=\t]({1.*0.951994139654009*cos(\t r)+0.*0.951994139654009*sin(\t r)},{0.*0.951994139654009*cos(\t r)+1.*0.951994139654009*sin(\t r)});
\draw [shift={(-9.849908283634662,0.2036959007815301)},line width=2.pt,color=ffffff]  plot[domain=4.810430807037666:4.948474356880295,variable=\t]({1.*0.951994139654009*cos(\t r)+0.*0.951994139654009*sin(\t r)},{0.*0.951994139654009*cos(\t r)+1.*0.951994139654009*sin(\t r)});
\draw [shift={(-9.849908283634662,0.2036959007815301)},line width=2.pt,color=ffffff]  plot[domain=3.8712089586447846:3.975435973019547,variable=\t]({1.*0.951994139654009*cos(\t r)+0.*0.951994139654009*sin(\t r)},{0.*0.951994139654009*cos(\t r)+1.*0.951994139654009*sin(\t r)});
\draw [shift={(-9.849908283634662,0.2036959007815301)},line width=2.pt,color=ffffff]  plot[domain=5.790147382783656:5.930373341510596,variable=\t]({1.*0.9519941396540088*cos(\t r)+0.*0.9519941396540088*sin(\t r)},{0.*0.9519941396540088*cos(\t r)+1.*0.9519941396540088*sin(\t r)});
\draw [shift={(-9.849908283634662,0.2036959007815301)},line width=2.pt,color=ffffff]  plot[domain=0.39862216062272726:0.510203524081383,variable=\t]({1.*0.9519941396540091*cos(\t r)+0.*0.9519941396540091*sin(\t r)},{0.*0.9519941396540091*cos(\t r)+1.*0.9519941396540091*sin(\t r)});
\draw [shift={(-9.849908283634662,0.2036959007815301)},line width=2.pt,color=ffffff]  plot[domain=1.2356784318523415:1.3337901550111841,variable=\t]({1.*0.951994139654009*cos(\t r)+0.*0.951994139654009*sin(\t r)},{0.*0.951994139654009*cos(\t r)+1.*0.951994139654009*sin(\t r)});
\draw [shift={(-9.849908283634662,0.2036959007815301)},line width=2.pt,color=ffffff]  plot[domain=2.2421594201295574:2.337733564551046,variable=\t]({1.*0.951994139654009*cos(\t r)+0.*0.951994139654009*sin(\t r)},{0.*0.951994139654009*cos(\t r)+1.*0.951994139654009*sin(\t r)});
\draw [line width=2.pt,color=qqwwzz] (9.341565933059872,-0.7749470911311815) ellipse (0.36861814497774376cm and 0.34989404365680776cm);
\draw [line width=2.pt,color=qqwwzz] (-10.658434066940128,-0.7749470911311871) ellipse (0.36861814497774353cm and 0.34989404365680754cm);
\draw [shift={(-6.201593228809644,0.)},line width=2.pt,color=ffffff]  plot[domain=5.868044438325866:5.9971915662813,variable=\t]({1.*1.6981832314332828*cos(\t r)+0.*1.6981832314332828*sin(\t r)},{0.*1.6981832314332828*cos(\t r)+1.*1.6981832314332828*sin(\t r)});
\draw [shift={(-6.201593228809644,0.)},line width=2.pt,color=ffffff]  plot[domain=0.05944576271585577:0.1752131699197991,variable=\t]({1.*1.6981832314332823*cos(\t r)+0.*1.6981832314332823*sin(\t r)},{0.*1.6981832314332823*cos(\t r)+1.*1.6981832314332823*sin(\t r)});
\draw [shift={(-6.201593228809644,0.)},line width=2.pt,color=ffffff]  plot[domain=0.4683433463231158:0.5633851181134774,variable=\t]({1.*1.6981832314332823*cos(\t r)+0.*1.6981832314332823*sin(\t r)},{0.*1.6981832314332823*cos(\t r)+1.*1.6981832314332823*sin(\t r)});
\draw [shift={(3.453995848431412,0.)},line width=2.pt,color=ffffff]  plot[domain=5.856857986581255:5.933997285500661,variable=\t]({1.*1.9570714950321337*cos(\t r)+0.*1.9570714950321337*sin(\t r)},{0.*1.9570714950321337*cos(\t r)+1.*1.9570714950321337*sin(\t r)});
\draw [shift={(3.453995848431412,0.)},line width=2.pt,color=ffffff]  plot[domain=-0.06785769590849178:0.011905417543477381,variable=\t]({1.*1.957071495032133*cos(\t r)+0.*1.957071495032133*sin(\t r)},{0.*1.957071495032133*cos(\t r)+1.*1.957071495032133*sin(\t r)});
\draw [shift={(3.453995848431412,0.)},line width=2.pt,color=ffffff]  plot[domain=0.3414782906133585:0.39595171397579226,variable=\t]({1.*1.957071495032133*cos(\t r)+0.*1.957071495032133*sin(\t r)},{0.*1.957071495032133*cos(\t r)+1.*1.957071495032133*sin(\t r)});
\begin{scriptsize}
\draw[color=ffffff] (0.7911491159238497,1.7975768723976244) node {$r_4$};
\draw[color=ffffff] (-0.17273060519615357,1.4639261997022492) node {$s_4$};
\end{scriptsize}
\end{tikzpicture} 
\caption{\footnotesize{$\Z$-cover of the surface of equation $Cos(2\pi u)+Cos(2\pi v)+Cos(2\pi w)=0$ within $\mathbb{T}^2\times \mathbb{R}$}}
\end{figure}
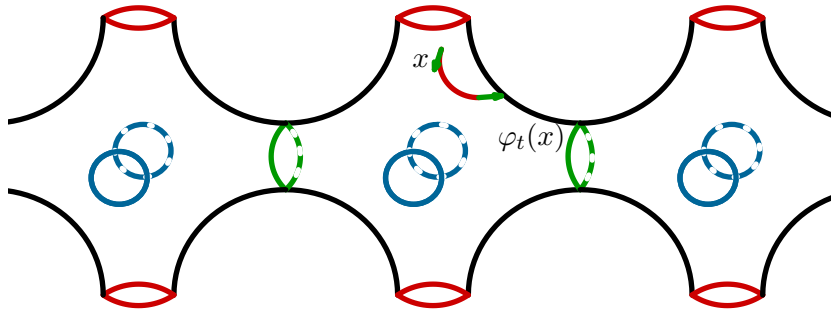
 
We start by stating our main results  in the particular cases 
of the $\mathbb Z$-periodic Lorentz gas and of the unit geodesic flow on a $\mathbb Z$-cover 
of a compact negatively curved surface.
We consider a natural projection $\Psi:\mathcal R_0\mapsto \mathbb R$ defined as follows.
For the Lorentz gas and for the unit geodesic flow on the particular example of $\mathbb Z$-periodic surface given above, we consider the function $\Psi$ which is the first coordinate (in $\mathbb R$) of the position of the particle. For more general geodesic flows on surfaces $\mathcal R_0$ not embedded in $\mathbb R\times \mathbb T^2$, this $\Psi$ can be replaced by
the label in $\mathbb Z$ of the copy in $\mathcal R_0$ of the compact surface $\mathcal R$ containing the position of the particle.

We will state results of convergence in the sense of {\bf strong convergence in distribution with respect to $\nu$. We will write
$\mathcal{L}_{\nu}$ for this type of convergence.} 
We recall that this means the convergence in distribution with respect to every probability measure absolutely continuous with respect to $\nu$. 
In the first perturbation setting (integrable case), we establish the following result (valid in a much more general context, see Theorem~\ref{thmgeneintegrable}).
\begin{thm}[First setting: integrable observable]\label{thmexempleintegrable}
Let $(\mathcal M,\f_t,\nu)$ be the above mentioned Lorentz gas flow or geodesic flow. 
Assume $\tilde f:\mathbb R^d\times\mathcal M\mapsto\mathbb R^d$ is
integrable in its second variable. 
Assume furthermore that $\tilde f$ is uniformly Lipschitz continuous in its first variable
with $\nu$-integrable Lipschitz constant. 
Then, for all $T>0$, 
\[
\left(\epsilon^{-\frac 12}(X_t^\epsilon(x_0,\omega)-x_0)\right)_{t\in[0;T]}\xrightarrow[ \epsilon \rightarrow 0]{\mathcal L_\nu,\|\|_\infty}
\left(\int_{\mathcal M}
\tilde f(x_0,\cdot)\, d\nu\,  \tilde L_t(0)\right)_{t\in[0;T]}\, ,
\]
where $(\tilde L_s(0))_{s\geq 0}$, is a continuous version, taken at position 0, of the local time of a Brownian motion $(\tilde B_s)_{s\geq 0}$ limit in distribution of the normalized projection of the trajectory 
\begin{align}\label{eqbrowniencontin}
    (\epsilon^{1/2}\Psi\circ \f_{t/\epsilon} )_{t\in[0,T]}\xrightarrow[ \epsilon \rightarrow 0]{\mathcal L_\nu,\|\|_\infty}(\tilde B_t)_{t\in [0;T]}\, .
\end{align}
Furthermore, if $\int_{\mathcal M}\tilde f(x_0,\cdot)\, d\nu=0$ and if $\tilde f(x_0,\cdot)$ is H\"older continuous, then $\left(\epsilon^{-\frac 34}(X_t^\epsilon(x_0,\omega)-x_0)\right)_{t\in[0;T]}$
has the same limit in distribution as $\left(\epsilon^{1/4}\int_0^{t/\epsilon}f(x_0,\varphi_s(\cdot))\, ds\right)_{t\in[0,T]}$  (see~\eqref{eqtilda}  and Theorem~\ref{thmExemple} with $\overline f=0$).
\end{thm}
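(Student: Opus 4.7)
The plan is to start from the integral form of~\eqref{pertx}, $X_t^\epsilon(x_0,\omega)-x_0=\int_0^t\tilde f(X_s^\epsilon,\varphi_{s/\epsilon}\omega)\, ds$, and, after the change of variable $u=s/\epsilon$, to rewrite
\[
\epsilon^{-1/2}\bigl(X_t^\epsilon(x_0,\omega)-x_0\bigr)=\epsilon^{1/2}\int_0^{t/\epsilon}\tilde f\bigl(X_{\epsilon u}^\epsilon(x_0,\omega),\varphi_u(\omega)\bigr)\, du.
\]
The strategy is to replace $X_{\epsilon u}^\epsilon$ inside $\tilde f$ by the initial value $x_0$: the resulting \emph{frozen} integral $\epsilon^{1/2}\int_0^{t/\epsilon}\tilde f(x_0,\varphi_u\omega)\, du$ is a normalized Birkhoff integral of a $\nu$-integrable observable over the $\mathbb Z$-extension. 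By the local-time limit theorem attached to~\eqref{eqbrowniencontin} (the general result of Section~\ref{secintegrable} specialized to the constant $\int_{\mathcal M}\tilde f(x_0,\cdot)\, d\nu$), it converges in $(\mathcal L_\nu,\|\cdot\|_\infty)$ to $\int_{\mathcal M}\tilde f(x_0,\cdot)\, d\nu\cdot\tilde L_t(0)$, which is the desired target.

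The key technical step is an a priori bound on the displacement. Using the uniform Lipschitz assumption in the integral equation,
\[
A(T):=\sup_{t\in[0,T]}|X_t^\epsilon-x_0|\leq \epsilon\int_0^{T/\epsilon}|\tilde f(x_0,\varphi_u\omega)|\, du + A(T)\cdot\epsilon\int_0^{T/\epsilon}L(\varphi_u\omega)\, du,
\]
where $L(\omega)$ denotes the $\nu$-integrable Lipschitz constant of $\tilde f(\cdot,\omega)$. Applying the same local-time asymptotic to the non-negative integrable observables $|\tilde f(x_0,\cdot)|$ and $L$, each of the two integrals on the right is of order $\epsilon^{-1/2}$ in the $\mathcal L_\nu$ sense; hence the coefficient of $A(T)$ on the right is $O(\epsilon^{1/2})$ and can be absorbed, yielding $A(T)=O(\epsilon^{1/2})$ in $\mathcal L_\nu$-probability. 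Plugging this back into the Lipschitz remainder
\[
\epsilon^{1/2}\int_0^{t/\epsilon}L(\varphi_u\omega)|X_{\epsilon u}^\epsilon-x_0|\, du\leq A(T)\cdot\epsilon^{1/2}\int_0^{T/\epsilon}L(\varphi_u\omega)\, du=O(\epsilon^{1/2})\cdot O(1)\to 0
\]
(uniformly in $t\in[0,T]$) shows that $\epsilon^{-1/2}(X_t^\epsilon-x_0)$ and the frozen integral share the same $(\mathcal L_\nu,\|\cdot\|_\infty)$-limit, giving the first claim.

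For the centered case $\int_{\mathcal M}\tilde f(x_0,\cdot)\, d\nu=0$, the scaling becomes $\epsilon^{-3/4}$, reflecting the fact that centered H\"older observables on the $\mathbb Z$-extension have Birkhoff integrals of order $n^{1/4}$ rather than $n^{1/2}$, so that the correct candidate is $\epsilon^{1/4}\int_0^{t/\epsilon}f(x_0,\varphi_s\cdot)\, ds$, with distributional limit identified by Theorem~\ref{thmExemple} applied with $\overline f=0$ (this is where the H\"older hypothesis enters, in the spirit of~\eqref{eqtilda}). The same bootstrap now gives $A(T)=O(\epsilon^{3/4})$, and the Lipschitz remainder becomes $\epsilon^{1/4}\cdot A(T)\cdot\int_0^{T/\epsilon}L(\varphi_u\omega)\, du=O(\epsilon^{1/4})\cdot O(\epsilon^{3/4})\cdot O(\epsilon^{-1/2})=O(\epsilon^{1/2})\to 0$. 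Hence $\epsilon^{-3/4}(X_t^\epsilon-x_0)$ and $\epsilon^{1/4}\int_0^{t/\epsilon}f(x_0,\varphi_s\cdot)\, ds$ have the same limit in distribution. The main obstacle throughout is carrying out the a priori bound on $A(T)$ in a mode compatible with $(\mathcal L_\nu,\|\cdot\|_\infty)$-convergence: since $\nu$ is infinite, classical $L^p$-Gronwall estimates are not available and the bootstrap must instead be executed along probability sets absolutely continuous with respect to $\nu$, the required tightness being transferred from~\eqref{eqbrowniencontin} and the local-time convergence.
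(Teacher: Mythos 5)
Your proposal follows the same overall route as the paper's Theorem~\ref{thmgeneintegrable} (which is then specialized to obtain Theorem~\ref{thmexempleintegrable}): write the solution as the frozen Birkhoff integral $\int_0^t\tilde f(x_0,\varphi_{s/\epsilon})\,ds$ plus a remainder controlled by the Birkhoff integral of the Lipschitz constant, then feed in the local-time FLLN. The places where the two differ are technical but worth flagging.

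First, the paper controls the Lipschitz remainder by Gr\"onwall, \emph{and} observes by the Hopf ratio ergodic theorem (infinite measure, conservative ergodic) that $\epsilon\int_0^{T/\epsilon}[\tilde f(\cdot,\varphi_u\omega)]\,du\to 0$ for $\nu$-a.e.\ $\omega$. This yields a single scale-free almost-everywhere estimate
$\sup_{t\le T}|X_t^\epsilon-x_0-\int_0^t\tilde f(x_0,\varphi_{s/\epsilon})\,ds|=o\bigl(\sup_{t\le T}|\int_0^t\tilde f(x_0,\varphi_{s/\epsilon})\,ds|\bigr)$,
which automatically covers both the noncentered ($\epsilon^{1/2}$) and centered ($\epsilon^{3/4}$) normalizations. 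You instead invoke the FLLN to get $\epsilon\int_0^{T/\epsilon}L\circ\varphi_u\,du=O(\epsilon^{1/2})$ \emph{in distribution} and run a bootstrap/absorption. That is workable, but it is exactly where you flag "the main obstacle" — the implicit constants are random and you must make the absorption uniform along probability measures $\ll\nu$ — whereas the Hopf argument makes the smallness almost-sure, so no such bookkeeping is needed. The paper's version is cleaner for this reason.

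Second, a concrete slip in the centered case: your a priori bound pulls the absolute value inside, $A(T)\le\epsilon\int_0^{T/\epsilon}|\tilde f(x_0,\varphi_u)|\,du+A(T)\,\epsilon\int_0^{T/\epsilon}L\circ\varphi_u\,du$. Since $|\tilde f(x_0,\cdot)|\ge 0$ is never centered, the first term is only $O(\epsilon^{1/2})$, hence the bootstrap gives $A(T)=O(\epsilon^{1/2})$, not the claimed $O(\epsilon^{3/4})$. To obtain $O(\epsilon^{3/4})$ you would have to keep $\sup_t|\epsilon\int_0^{t/\epsilon}\tilde f(x_0,\varphi_u)\,du|$ without pushing the absolute value inside (this is what the paper's scale-free estimate implicitly does). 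Fortunately the error is harmless here: $A(T)=O(\epsilon^{1/2})$ already makes the Lipschitz remainder $\epsilon^{1/4}A(T)\int_0^{T/\epsilon}L\circ\varphi_u\,du=O(\epsilon^{1/4})\to 0$, so the conclusion of the centered case still holds, just with a weaker intermediate estimate than you assert.
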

In the second setting (when $\tilde f(x,\omega)=f(x,\omega)+\bar f(x)$) of a differential equation perturbed by a "centered" additive noise, we obtain the following result.
Whereas the result in the case of a non-centered additive perturbation term is fairly direct and general (see Theorem~\ref{thm:averagenonnull}), the result in the case of a non-centered additive perturbation term requires more specific assumptions (see Theorem~\ref{thm:mainflow} 
and Corollary~\ref{corbillard}).
\begin{thm}[Second setting: additive perturbation term]\label{thmExemple}
Let $(\mathcal M,\f_t,\nu)$ be the above mentioned Lorentz gas flow or geodesic flow. Suppose that  $\tilde f :\mathbb{R}^d \times \M \rightarrow \mathbb{R}^d$ is the sum of a $C^2_b$ offset map $\bar f : \mathbb{R}^d \rightarrow \mathbb{R}^d$ and of a map $f :\mathbb{R}^d \times \M \rightarrow \mathbb{R}$,  $C^2_b$ in its first coordinate.
\begin{itemize}
\item (Non-centered additive perturbation)
Assume furthermore that the Lipschitz constant $[f]$ and $[D_1f]$
of respectively $f$ and $D_1f$ with respect to their first coordinate are $\nu$-integrable. 
Then, for any $x\in\mathbb R^d$, the normalized error 
\[
\left(
\epsilon^{-\frac 12}E_t^\epsilon(x,.):= \epsilon^{-\frac 12}(X_t^\epsilon(x,.)-W_t(x))\right)_{t\geq 0}
\] 
converges strongly in distribution  with respect to $\nu$, for the \textbf{local topology} of $\mathcal C([0,+\infty))$, to the random process $(\tilde Y_t(x))_{t >0}$ solution of
\[
d\tilde Y_t(x,\cdot)=h(W_t(x))\, d\tilde L_t(0)+D\overline f(W_t(x))\tilde Y_tdt,\quad \tilde Y_0(x)=0\, ,
\]
with $h(x):=\int_{\mathcal M}f(x,\cdot)\, d\nu$.
\item (Centered additive perturbation, see Theorem~\ref{thmExempleprecis} for the precise assumptions)
Assume now that $f(x,\cdot)$ is
H\"older and with null integral with respect to $\nu$, 
with $f(x,\omega)$ rapidly decaying when $|\Psi(\omega)|$ tends to infinity.
Then, for any $x\in\mathbb R$ and $S>0$, the normalized error 
\[
\left(\epsilon^{-3/4}E_t^\epsilon(x,.):= \epsilon^{-3/4}(X_t^\epsilon(x,.)-W_t(x))\right)_{t\in[0,S]}
\] 
converges strongly in distribution with respect to $\nu$ (and for the uniform metric) to the random process $(Y_t(x,\cdot))_{t \in [0,S]}$ solution of the following stochastic differential equation.
\begin{align}
dY_t(x,\cdot)=\sqrt{\tilde a(W_s(x))}dB_{\tilde L_s(0)}+D\overline f(W_s(x))Y_sds,\quad Y_0(x)=0\, ,
\end{align}
where $B$ is a standard Brownian motion independent of $(\tilde L_s(0))_{s\geq 0}$
defined as in Theorem~\ref{thmexempleintegrable}, 
and where the map $\tilde a :\mathbb R \mapsto Mat_{d\times d}(\mathbb R)$ is the asymptotic variance matrix induced by $f$ : for any\footnote{
$\sqrt{\tilde a(W_s(x))}$ is  is the unique symmetric nonnegative matrix whose square power is 
$\tilde a(W_s(x))$,
i.e. the square root $\sqrt{\tilde a(W_s(x))}$ identifies, via reduction through orthogonal matrix, with the diagonal matrix  square root of the diagonal reduced matrix of $\tilde a(W_s(x))$.} $x\in \mathbb R$
\begin{align}\label{eqtilda}
    \frac{1}{T^{1/4}}\int_0^Tf(x,\f_s(.))ds\xrightarrow[ \epsilon \rightarrow 0]{\mathcal L_\nu,\|\|_\infty}\sqrt{\tilde a(x)} B_{\tilde L_1(0)}\, .
\end{align}
\end{itemize}
\end{thm}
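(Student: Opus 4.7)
The plan is to derive Theorem~\ref{thmExemple} by specialising the more general statements Theorem~\ref{thm:averagenonnull} (non-centered case) and Theorem~\ref{thm:mainflow} together with Corollary~\ref{corbillard} (centered case) to the Lorentz gas flow and to the unit geodesic flow on a $\mathbb{Z}$-cover of a negatively curved compact surface, after verifying that these systems satisfy the abstract hypotheses (existence of a Young-tower-like structure, $\Psi$ satisfying the relevant local limit theorem with $\tilde L_t(0)$ as local time, etc.). Below I sketch how the underlying general argument would proceed so that each of the two bullets drops out.

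Start from the integral representation. Subtracting~\eqref{equaw} from~\eqref{pertx} and using $\tilde f = f + \bar f$ gives
\[
E_t^\epsilon = \int_0^t f(X_s^\epsilon, \varphi_{s/\epsilon})\, ds + \int_0^t \bigl(\bar f(X_s^\epsilon) - \bar f(W_s)\bigr)\, ds.
\]
A $C^2_b$ Taylor expansion of the second integrand yields $\bar f(X_s^\epsilon) - \bar f(W_s) = D\bar f(W_s)\, E_s^\epsilon + O(|E_s^\epsilon|^2)$. Introducing the rescaled error $Z_t^\epsilon := \alpha_\epsilon^{-1} E_t^\epsilon$, with $\alpha_\epsilon = \epsilon^{1/2}$ in the first bullet and $\alpha_\epsilon = \epsilon^{3/4}$ in the second, produces the Duhamel-type equation
\[
Z_t^\epsilon = M_t^\epsilon + \int_0^t D\bar f(W_s)\, Z_s^\epsilon\, ds + r_t^\epsilon, \qquad M_t^\epsilon := \alpha_\epsilon^{-1}\int_0^t f(X_s^\epsilon, \varphi_{s/\epsilon})\, ds,
\]
with remainder $r_t^\epsilon = O\bigl(\alpha_\epsilon \sup_{u\le t}|Z_u^\epsilon|^2\bigr)$.

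The heart of the proof is to identify the limit of $M_t^\epsilon$. The change of variable $u = s/\epsilon$ gives $M_t^\epsilon = \epsilon\alpha_\epsilon^{-1}\int_0^{t/\epsilon} f(X_{\epsilon u}^\epsilon, \varphi_u)\, du$. To leading order one replaces $X_{\epsilon u}^\epsilon$ by $W_{\epsilon u}$ (slow variation), and then freezes $W_{\epsilon u}$ to $W_{k\epsilon^\beta}$ on mesoscopic blocks of length $\epsilon^\beta$ for a suitable $\beta\in(0,1)$; the $\nu$-integrable Lipschitz assumption on $f$ and $D_1f$, together with (in the centered case) the rapid decay of $f(x,\cdot)$ in $|\Psi|$, control the substitution errors. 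On each block the inner integral becomes a Birkhoff-type integral of a \emph{fixed} observable. In the non-centered case an infinite-measure Darling--Kac type statement for the $\mathbb{Z}$-extension gives $\epsilon^{1/2}\int_0^{t/\epsilon} f(x,\varphi_u)\, du \to h(x)\,\tilde L_t(0)$, so summing blocks yields $M_t^\epsilon \to \int_0^t h(W_s)\, d\tilde L_s(0)$. In the centered case, a rescaling of~\eqref{eqtilda} gives $\epsilon^{1/4}\int_0^{t/\epsilon} f(x,\varphi_u)\, du \to \sqrt{\tilde a(x)}\,B_{\tilde L_t(0)}$, and combining blocks leads to the time-changed stochastic integral $\int_0^t \sqrt{\tilde a(W_s)}\, dB_{\tilde L_s(0)}$, using conditional independence of the Gaussian fluctuation from $\tilde L$. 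Once $M^\epsilon$ is controlled, continuous dependence of the solution of the linear ODE $dZ_t = dM_t + D\bar f(W_t)\, Z_t\, dt$ on $M$, together with a Grönwall estimate to absorb $r^\epsilon$, produces the stated limits.

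I expect the main obstacle to be the joint functional convergence of $M_t^\epsilon$ as a process indexed by $t$, especially in the centered case. Two difficulties interact: (i) $X_s^\epsilon$ is itself random and coupled to $\varphi_{s/\epsilon}$ through the shared $\omega$, so the substitution $X_s^\epsilon \leadsto W_s$ must come with quantitative control of $\sup_{s\le t}|X_s^\epsilon-W_s|$ obtained via a bootstrap on $\alpha_\epsilon^{-1}|E_s^\epsilon|$; (ii) identifying the It\^o-type limit requires tightness in $C([0,S])$ and the joint convergence of the Birkhoff integrals on mesoscopic blocks to independent Gaussian increments conditionally on $\tilde L_\cdot(0)$, which is where the decoupling estimates inherited from the Young-tower structure of the Lorentz gas and of the geodesic flow enter in an essential way.
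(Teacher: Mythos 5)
Your stated plan --- deduce the first bullet from Theorem~\ref{thm:averagenonnull} and the second bullet from Corollary~\ref{corbillard}/Theorem~\ref{thm:mainflow}, after checking the abstract hypotheses on the two models --- is exactly what the paper does: the first bullet is a one-line application of Theorem~\ref{thm:averagenonnull} using~\eqref{FLLNbill2}, and the second is Theorem~\ref{thmExempleprecis}, which verifies Assumptions (a)--(d) of Theorem~\ref{thm:mainflow} (Assumption (b) coming from the discrete-time result of~\cite{MP24discrete}, Assumption (d) from the classical finite-measure averaging results for $(\Mp,\tp,\mup)$). On that level the routes coincide.

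Where your supplementary sketch of the ``underlying general argument'' diverges from the paper --- and where, taken at face value, it would run into trouble --- is that you work directly with the flow and freeze $W_{\epsilon u}$ on time blocks of length $\epsilon^\beta$, never passing to the discrete-time dynamics. The paper instead writes the flow as a suspension over $(M,T,\mu)$ with roof function $\tau$, replaces $X^\epsilon$, $W$ by the discrete-time solutions $\tilde x^\epsilon$, $\tilde w^\epsilon$ (Kifer's lemma, Eqs.~\eqref{eqxtild}--\eqref{eqwtilde}), and only then changes back time via $n_{t/\epsilon}$. The reason this detour is not optional in the centered case is the point the paper flags explicitly in the introduction: the fluctuation of the hitting time, $\sum_{k<t/\epsilon}(\tau\circ T^k-\overline\tau)$, is of order $\epsilon^{-1/2}$, which is \emph{larger} than the order $\epsilon^{-3/4}\cdot\epsilon=\epsilon^{1/4}$ error one is targeting, so it must be shown to cancel; this is precisely what Assumption~(d) (the processes $z^\epsilon_t=\sqrt{\epsilon\mathfrak a_\epsilon}\int_0^{t/\epsilon}(\tau\circ T^{\lfloor s\rfloor}-\overline\tau)\overline f(w_{\epsilon s})\,ds$ and $\dot z^\epsilon_t$) and the variational-equation comparison between $\mathcal F_\epsilon$ and $\mathcal F_0$ in Proposition~\ref{propconvyeps} take care of. Your block argument on the flow never isolates this contribution, so the step ``combining blocks leads to the time-changed stochastic integral'' hides exactly the cancellation that needs to be proved. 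Two further points your sketch leaves implicit but the paper addresses: the needed input is not the fixed-$x$ convergence~\eqref{eqtilda} but the full functional limit for the perturbed Birkhoff integral $v^\epsilon_t=\mathfrak a_\epsilon\int_0^{t/\epsilon}F(w_{\epsilon s}(x),T^{\lfloor s\rfloor})\,ds$, which is imported from~\cite{MP24discrete}; and the upgrade from convergence under one reference measure $\overline\mu$ to \emph{strong} convergence in distribution with respect to $\nu$ is obtained via Zweim\"uller's theorem (Lemma~\ref{lemmup}), which you do not mention.
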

Let us indicate that the two limit processes $\tilde Y_t(x)$ and $Y_t(x)$ appearing in Theorem~\ref{thmExemple} can be rewritten
\[
\tilde Y_t(x)=\tilde V_{t}(x)
+\int_0^t D\overline f(W_s(x)) \exp\left(\int_{s}^{t}D\overline f(W_u(x))\, du\right)\tilde V_{s}(x) ds\, ,
\]
and
\[
 Y_t(x)=V_{t}(x)
+\int_0^t D\overline f(W_s(x)) \exp\left(\int_{s}^{t}D\overline f(W_u(x))\, du\right)V_{s}(x) ds\, ,
\]
with 
\[
\tilde V_t(x)=\int_0^th(w_s(x))\, d\tilde L_s(0)\quad\mbox{and}\quad
V_t(x)=\int_0^t\sqrt{\tilde a(W_s(x))}\, dB_{\widetilde L_s(0)}\, .
\]

Observe that the first setting  corresponds to the second setting with $\bar f$ identically null so that  $W_s(x_0)=x_0$.

It is worth noticing that the flows we are considering here can be described as suspension flows of discrete time dynamics considered in \cite{MP24discrete}. 
Thus, Theorem~\ref{thmExemple}
is closely related to the study of the counterpart problem in collision dynamics studied in \cite{MP24discrete}. 
This extension of the averaging result \cite[Theorem 3.6]{MP24discrete} from the Poincar\'e section (collision dynamics
for the Lorentz gas) to the suspension flow was natural for probability preserving dynamical systems such as the Sinai billiard. Here, when $f$ is centered, the techniques used by Kifer or Pène (see \cite{kifer} or \cite{peneESAIMPS}) no longer holds mostly because of the order
of the error term $E_t^\epsilon$ is in $\epsilon^{3/4}$ whereas the one of the averaging of the hitting time $\sum_{k=0}^{\lfloor t/\epsilon \rfloor} (\tau \circ T^{\lfloor t/\epsilon\rfloor}- \overline \tau)$ is in $\epsilon^{1/2}$. This is why the statement of Theorem \ref{thmExemple} requires stronger assumptions on the first derivative of $f(\cdot)$ than its collision dynamics counterpart \cite[Theorem 3.6]{MP24discrete} for its proof to fit.\\
Theorem \ref{thmExemple} leads to the following result (see Corollary \ref{corbirkflo}) interesting in itself that provides a limit Theorem for a perturbed Birkhoff integral that generalises the Central Limit Theorem provided by Pène and Thomine in \cite{PT20} when $f$ depended only on the label of the cell in $\mathbb Z$. 


 \begin{thm}\label{thmexbirkflo}
Let $f: \mathbb R^d \times \M \mapsto \mathbb R^d$ 
satisfying the same assumptions as 
in the previous Theorem \ref{thmExemple}. Then the renormalized perturbed ergodic integral $u_t^\epsilon$ defined for any $\omega\in  \M$ by
$$
u_t^\epsilon(x,\omega):=\epsilon^{1/4}\int_0^{t/\epsilon} f(\epsilon s,\f_s(\omega))\, ds
$$
converges strongly in distribution with respect to $\nu$ to $(V_t=\int_0^{t} \sqrt{\tilde a(W_s(x))}dB_{\tilde L_s(0)})_{t\ge 0}$,
as $\epsilon\rightarrow 0+$.
 \end{thm}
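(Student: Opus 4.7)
The strategy is to deduce Theorem \ref{thmexbirkflo} from the centered case of Theorem \ref{thmExemple}, by recognizing $u_t^\epsilon(x,\omega)$ as (up to negligible remainders) the martingale driver $V_t$ of the limiting SDE for $\epsilon^{-3/4}E_t^\epsilon$. Concretely, I apply Theorem \ref{thmExemple} to the slow--fast system $\dot X_t^\epsilon = \overline{f}(X_t^\epsilon) + f(X_t^\epsilon, \f_{t/\epsilon}(\omega))$ with $X_0^\epsilon = x$ and averaged solution $W_t(x)$, obtaining $\epsilon^{-3/4}E_t^\epsilon \xrightarrow{\mathcal L_\nu} Y_t$, where $Y$ solves $dY_t = \sqrt{\tilde a(W_t(x))}\, dB_{\tilde L_t(0)} + D\overline{f}(W_t(x))\, Y_t\, dt$. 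In particular $V_t = Y_t - \int_0^t D\overline{f}(W_s(x))\, Y_s\, ds$, so the task reduces to showing that $u_t^\epsilon$ has the same limit.

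I exploit the integral form $E_t^\epsilon = \int_0^t [f(X_s^\epsilon,\f_{s/\epsilon}) + \overline{f}(X_s^\epsilon) - \overline{f}(W_s(x))]\, ds$ and Taylor-expand $\overline{f}$ and $f(\cdot,\omega)$ around $W_s(x)$ using the $C^2_b$ assumption. This yields
$$E_t^\epsilon = \int_0^t f(W_s(x),\f_{s/\epsilon})\, ds + \int_0^t D\overline{f}(W_s(x))\, E_s^\epsilon\, ds + \int_0^t D_1 f(W_s(x),\f_{s/\epsilon})\, E_s^\epsilon\, ds + R_t^\epsilon,$$
where $R_t^\epsilon$ gathers quadratic remainders of size $O(\|E_\cdot^\epsilon\|_\infty^2) = O(\epsilon^{3/2})$ controlled via the $\nu$-integrability of $[f]$ and $[D_1 f]$. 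After the change of variable $u = \epsilon s$ one identifies the first integral, up to the factor $\epsilon^{3/4}$, with $u_t^\epsilon$; dividing by $\epsilon^{3/4}$ and rearranging gives
$$u_t^\epsilon = \epsilon^{-3/4}E_t^\epsilon - \int_0^t D\overline{f}(W_s(x))\, \epsilon^{-3/4}E_s^\epsilon\, ds - \epsilon^{-3/4}\int_0^t D_1 f(W_s(x),\f_{s/\epsilon})\, E_s^\epsilon\, ds + o(1).$$
The first two terms converge jointly, by Theorem \ref{thmExemple} and continuity of the (linear) solution map $Y \mapsto Y - \int_0^\cdot D\overline f(W_s(x))Y_s\, ds$, to $Y_t - \int_0^t D\overline f(W_s(x))Y_s\, ds = V_t$.

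The principal obstacle is the cross-term $\epsilon^{-3/4}\int_0^t D_1 f(W_s(x),\f_{s/\epsilon})\, E_s^\epsilon\, ds$: it must be shown to vanish in the uniform topology as $\epsilon \to 0$. Heuristically this should hold because $D_1 f(W_s(x),\cdot)$ inherits the H\"older regularity and rapid decay of $f$ in $\omega$ (and in particular is $\nu$-integrable with the appropriate centering), while $E^\epsilon$ varies slowly with $\|E^\epsilon\|_\infty = O(\epsilon^{3/4})$. I would treat this by a freezing argument: partition $[0,S]$ into short intervals on which $E^\epsilon$ is essentially constant (paying $O(\epsilon^{3/4})$ from the modulus of continuity of $E^\epsilon$), apply on each interval the $\epsilon^{1/4}$-functional limit theorem underlying Theorem \ref{thmExempleprecis} to the Birkhoff integral of $D_1 f(W_s(x),\cdot)$, and sum the contributions. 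The H\"older continuity and rapid decay assumptions on $f$ in its $\omega$-variable are precisely what is needed for this step to go through; once established, Theorem \ref{thmexbirkflo} follows at once.
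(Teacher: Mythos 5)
Your proposal takes a genuinely different route from the paper's. The paper proves Theorem~\ref{thmexbirkflo} via Corollary~\ref{corbirkflo}, by applying Theorem~\ref{thm:mainflow} to an \emph{augmented} slow--fast system on $\mathbb R^{d+1}$: append the (deterministic) time coordinate, take $\tilde f((a,y),\omega)=(f(y,\omega),0)$ and $\bar f((a,y))=(0_{\mathbb R^d},1)$. Then $W_t(0,0)=(0,t)$, $X_t^\epsilon(0,0,\omega)=\big(\int_0^t f(s,\f_{s/\epsilon}(\omega))\,ds,\,t\big)$, and the error term is \emph{exactly} $E_t^\epsilon=(\epsilon^{3/4}u_t^\epsilon,0)$; since moreover $D\bar f\equiv 0$ for the augmented drift, the conclusion $Y_t=v_{t/\bar\tau}=V_t$ of Theorem~\ref{thm:mainflow} gives the claim directly, with no Taylor expansion, no quadratic remainder, and no cross-term. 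Your route instead applies Theorem~\ref{thmExemple} to the genuine system, correctly identifies $V_t=Y_t-\int_0^t D\bar f(W_s(x))Y_s\,ds$, and tries to reverse-engineer $u_t^\epsilon$ from a Taylor expansion of $E_t^\epsilon$. This is legitimate in spirit, but it trades the paper's one-line observation for an error analysis that you then have to carry out.

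The genuine gap is the cross term $\epsilon^{-3/4}\int_0^t D_1 f(W_s(x),\f_{s/\epsilon})E_s^\epsilon\,ds$, which you only assert vanishes ``heuristically'' by a freezing argument you sketch but do not execute. As written this is not a proof, and in fact the sketch is more elaborate than necessary. A crude bound closes the gap: set $g:=\sup_{r\in[0,S]}|D_1 f(W_r(x),\cdot)|$, which is $\nu$-integrable since $|D_1 f(W_r(x),\omega)|\le |D_1 f(x,\omega)|+[D_1f](\omega)\sup_r|W_r(x)-x|$; then
\begin{align*}
\left|\epsilon^{-3/4}\int_0^t D_1 f(W_s(x),\f_{s/\epsilon})E_s^\epsilon\,ds\right|
\le \Big(\epsilon^{-3/4}\sup_{s\le S}|E_s^\epsilon|\Big)\;\epsilon^{1/2}\;\Big(\epsilon^{1/2}\int_0^{S/\epsilon}g\circ\f_u\,du\Big)\, ,
\end{align*}
and the first factor is tight by Theorem~\ref{thmExemple}, the last by the functional law of large numbers~\eqref{FLLNbill2}, so the whole expression is $O_{\mathbb P}(\epsilon^{1/2})\to 0$. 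You should replace the freezing sketch by such an estimate.

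A secondary issue: after the change of variable $u=\epsilon s$, the ``first integral'' in your decomposition is $\epsilon^{1/4}\int_0^{t/\epsilon}f(W_{\epsilon u}(x),\f_u)\,du$, not $\epsilon^{1/4}\int_0^{t/\epsilon}f(\epsilon u,\f_u)\,du$, and the two coincide only when $\bar f$ is the identity and $x=0$. The paper sidesteps this by the augmented construction, for which $W_s=(0,s)$; in your formulation you need to make the identification explicit (or, as in Corollary~\ref{corbirkflo}, trivialize the drift).
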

Notice that such results is a continuous counterpart of \cite[Theorem 3.4]{MP24discrete} but for the same reason about the orders of the roof function $\tau-\bar \tau$ and of $v_t^\epsilon$ this result could not be reached as a mere consequence of the former (which was a statement paving the way to \cite[Theorem 3.4]{MP24discrete}) but as the last proof of the paper and a consequence of Theorem \ref{thmExemple}.

%
%
%
%
%
%
%

This paper is organized as follows. Section~\ref{secmodels} is dedicated to a more detailed presentation of the examples of Lorentz gas and geodesic flows. In Section~\ref{secintegrable}, we give a general result in the first setting where $\tilde f$ is integrable and apply it to prove  Theorem~\ref{thmexempleintegrable}. In Section~\ref{secaveragenonnull}, we state a general theorem the second setting when $\tilde f$ is non centered. 
In Section~\ref{sec:suspensionflow}, we present a general result in the second setting for suspension flows (this result is proved in Section~\ref{secproof}).
In Section~\ref{averagingproof}, we use the main result of
 Section~\ref{sec:suspensionflow} to study the case of non centered additive noise and prove in particular the first part of Theorem~\ref{thmExemple}. 
In Section~\ref{secZext0}, we apply the main result of
 Section~\ref{sec:suspensionflow} in a general context of $\mathbb Z$-extension, and 
and prove, in particular, the second part of Theorem~\ref{thmExemple}.

\section{Examples}\label{secmodels}
In this section, we introduce two important examples: 
the $\mathbb Z$-periodic Lorentz gas with finite horizon and the geodesic flow on a $\mathbb Z$-cover of a compact negatively curved surface. 

\subsection{$\Z$-periodic Lorentz gas with finite horizon.}\label{subsecbilliard}
The $\Z$-periodic Lorentz gas is 
the billiard system $(\M, \f_t, \nu)$ 
modeling the behavior of a point particle moving at unit velocity in a specific domain $\R_0$. This domain $\R_0$
corresponds to the flat cylindrical surface $\mathbb R\times\mathbb T$ doted of open convex obstacles belonging to a finite family $\{O_m+(l,0), l\in \Z, m\in I\}$  with $C^3$ boundary, and positive curvature periodically placed along the cylinder.  The point particle goes straight inside $\mathcal R_0$ and bounces against the obstacles according to the Snell-Descartes reflection law (the reflected angle is equal to the incident angle).\\

\begin{figure}[!htbp] 
\begin{center}
\definecolor{ffqqqq}{rgb}{1.,0.,0.}
\definecolor{qqqqff}{rgb}{0.,0.,1.}
\definecolor{ttzzqq}{rgb}{0.2,0.6,0.}
\begin{tikzpicture}[line cap=round,line join=round,>=triangle 45,x=0.38037453549730627cm,y=0.34978073522874864cm]
\clip(-16.882165919766276,-6.869686681735669) rectangle (16.365941027050763,5.683411002043446);
\draw [line width=2.pt] (0.,0.) ellipse (0.7607490709946125cm and 0.6995614704574973cm);
\draw [shift={(-5.,5.)},line width=2.pt]  plot[domain=-1.5707963267948966:0.,variable=\t]({1.*3.8*cos(\t r)+0.*3.8*sin(\t r)},{0.*3.8*cos(\t r)+1.*3.8*sin(\t r)});
\draw [shift={(-5.,-5.)},line width=2.pt]  plot[domain=0.:1.5707963267948966,variable=\t]({1.*3.8*cos(\t r)+0.*3.8*sin(\t r)},{0.*3.8*cos(\t r)+1.*3.8*sin(\t r)});
\draw [shift={(5.,-5.)},line width=2.pt]  plot[domain=1.5707963267948966:3.141592653589793,variable=\t]({1.*3.8*cos(\t r)+0.*3.8*sin(\t r)},{0.*3.8*cos(\t r)+1.*3.8*sin(\t r)});
\draw [shift={(5.,5.)},line width=2.pt]  plot[domain=3.141592653589793:4.71238898038469,variable=\t]({1.*3.8*cos(\t r)+0.*3.8*sin(\t r)},{0.*3.8*cos(\t r)+1.*3.8*sin(\t r)});
\draw [shift={(-15.,5.)},line width=2.pt]  plot[domain=-1.5707963267948966:0.,variable=\t]({1.*3.8*cos(\t r)+0.*3.8*sin(\t r)},{0.*3.8*cos(\t r)+1.*3.8*sin(\t r)});
\draw [shift={(-15.,-5.)},line width=2.pt]  plot[domain=0.:1.5707963267948966,variable=\t]({1.*3.8*cos(\t r)+0.*3.8*sin(\t r)},{0.*3.8*cos(\t r)+1.*3.8*sin(\t r)});
\draw [shift={(-5.,-5.)},line width=2.pt]  plot[domain=1.5707963267948966:3.141592653589793,variable=\t]({1.*3.8*cos(\t r)+0.*3.8*sin(\t r)},{0.*3.8*cos(\t r)+1.*3.8*sin(\t r)});
\draw [shift={(-5.,5.)},line width=2.pt]  plot[domain=3.141592653589793:4.71238898038469,variable=\t]({1.*3.8*cos(\t r)+0.*3.8*sin(\t r)},{0.*3.8*cos(\t r)+1.*3.8*sin(\t r)});
\draw [shift={(5.,5.)},line width=2.pt]  plot[domain=-1.5707963267948966:0.,variable=\t]({1.*3.8*cos(\t r)+0.*3.8*sin(\t r)},{0.*3.8*cos(\t r)+1.*3.8*sin(\t r)});
\draw [shift={(5.,-5.)},line width=2.pt]  plot[domain=0.:1.5707963267948966,variable=\t]({1.*3.8*cos(\t r)+0.*3.8*sin(\t r)},{0.*3.8*cos(\t r)+1.*3.8*sin(\t r)});
\draw [shift={(15.,-5.)},line width=2.pt]  plot[domain=1.5707963267948966:3.141592653589793,variable=\t]({1.*3.8*cos(\t r)+0.*3.8*sin(\t r)},{0.*3.8*cos(\t r)+1.*3.8*sin(\t r)});
\draw [shift={(15.,5.)},line width=2.pt]  plot[domain=3.141592653589793:4.71238898038469,variable=\t]({1.*3.8*cos(\t r)+0.*3.8*sin(\t r)},{0.*3.8*cos(\t r)+1.*3.8*sin(\t r)});
\draw [line width=2.pt] (10.,0.) ellipse (0.7607490709946125cm and 0.6995614704574973cm);
\draw [line width=2.pt] (-10.,0.) ellipse (0.7607490709946125cm and 0.6995614704574973cm);
\draw [line width=2.pt,color=ttzzqq] (-9.695508756736936,-3.4948625503364976)-- (-10.,-2.);
\draw [line width=2.pt,color=ttzzqq] (-10.,-2.)-- (-10.69,-5.);
\draw [line width=2.pt,color=ttzzqq] (-10.616120024427149,5.043159348380091)-- (-11.367798652172832,1.459152784020298);
\draw [line width=2.pt,color=ttzzqq] (-11.367798652172832,1.459152784020298)-- (-14.,1.);
\draw [->,line width=2.pt,color=qqqqff] (-9.695508756736936,-3.4948625503364976) -- (-9.861346128022495,-2.6807042411372577);
\draw [line width=2.pt,color=ffqqqq] (-15.,5.)-- (15.,5.);
\draw [line width=2.pt,color=ffqqqq] (-15.,-5.)-- (15.,-5.);
\draw (-1.740373498950506,1.2029536338021483) node[anchor=north west] {$O_1+0$};
\draw (-11.731474235636322,1.2029536338021483) node[anchor=north west] {$O_1-1$};
\draw (8.374840290364947,1.2029536338021483) node[anchor=north west] {$O_1+1$};
\draw (3.472374711494267,5.174571317950489) node[anchor=north west] {$O_2+1$};
\draw (3.5964877641239044,-2.7066075240313747) node[anchor=north west] {$O_2+1$};
\draw (12.470571027142983,5.050458265320853) node[anchor=north west] {$O_2+2$};
\draw (12.470571027142983,-2.7066075240313747) node[anchor=north west] {$O_2+2$};
\draw (-6.704895604136004,5.112514791635672) node[anchor=north west] {$O_2+0$};
\draw (-6.580782551506367,-2.7066075240313747) node[anchor=north west] {$O_2+0$};
\draw (-15.70309191978472,4.926345212691218) node[anchor=north west] {$O_2-1$};
\draw (-16.01337455135881,-2.7066075240313747) node[anchor=north west] {$O_2-1$};
\draw [->,line width=2.pt,color=qqqqff] (-14.,1.) -- (-14.787078582342174,0.8627045295666769);
\draw (-15.20663970926617,1.327066686431784) node[anchor=north west] {$\varphi_t(x)$};
\draw (-9.621552340932485,-2.7066075240313747) node[anchor=north west] {$x$};
\end{tikzpicture}
  \caption{\footnotesize{Illustration of a $\Z$-periodic Lorentz gas with finite horizon here with two patterns, $|I|=2$.}} \label{fig:GG08}
    \end{center}
      \vskip -0.5cm
\end{figure}
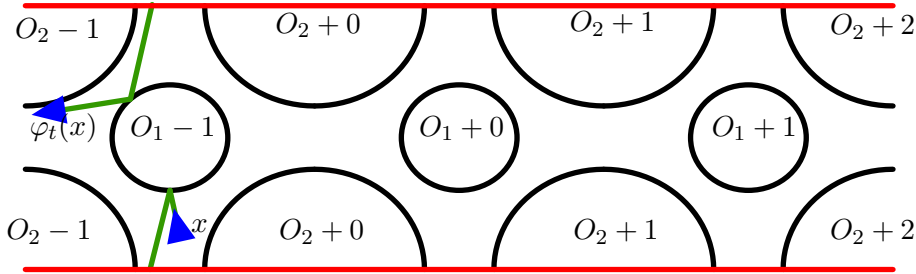

More precisely, given a finite family $(O_m)_{m\in I}$ of such obstacles on the cylinder $\mathbb R \times \mathbb T$ (where $\mathbb T:= \mathbb{R}/\Z$)
with $I$ a non finite set, and assuming that the closure of the obstacles $O_m+(l,0)$ remains pairwise disjoint,
we define the set $\R_0$ of allowed positions
$$
\R_0:=(\mathbb T\times \mathbb{R})\backslash \bigcup_{m,l}(O_m+(l,0)).
$$
$\M$ is then the phase space i.e the set of couple positions/unit-velocity 
 on $\R_0$:
$$
\M:=\R_0 \times \mathbb{S}^1/\sim
$$
where $\mathbb{S}^1$ is the unit circle and $\sim$ the relation identifying incident and reflected vectors, i.e. it identifies elements $(q,v) \in \partial \R_0 \times \mathbb{S}^1$ satisfying $\langle v, n(q) \rangle \geq 0$ (outgoing vector) with $(q,v')$ where $v'=v-2\langle v, n(q) \rangle n(q)$ and $n(q)$ denotes the unit normal vector to $\partial \R_0$ in $q$ directing into $\R_0$.

The Lorentz gas flow $(\f_t)_{t\geq 0}$
is then defined as the flow associating to the couple position/unit-velocity $(q_0,v_0) \in \M$ the new couple $(q_t,v_t)$ after time $t$. 
This flow preserves the infinite Lebesgue measure $\nu$ on $\M$.

To this system one can associate a natural Poincaré section $M$ corresponding to the phase space $\partial \R_0\times \mathbb S^1$ associated to the boundary of obstacles.
The Lorentz gas $(M,\f_t,\nu)$ is said to be in \textbf{finite horizon} when the map $\tau : M \to \mathbb R$ that corresponds to the hitting time of the Poincaré section is bounded. This can  equivalently be understood as the fact that there is no infinite free flight, any state in $\M$ leads to a collision with the Poincaré section $M$.  Notice that the finite horizon hypothesis and the disjointness of the closure of the obstacles lead the hitting time $\tau$ to be bounded away from $0$ and infinity.

The dynamical system $(\M,\f_t,\nu)$ can 
be expressed as a suspension flow over the finite horizon $\Z$-periodic billiard transformation $(M,T,\mu)$ with roof function $\tau$ whose transformation $T: M \to M$ maps a couple position/unit-velocity to the new couple position/unit-velocity after one reflection against an obstacle,
\begin{align*}
    T(x)=\f_{\tau(x)}(x).
\end{align*}
This transformation $T$ preserves an absolutely continuous infinite measure $\mu$ as recalled in \cite[Section 2.1]{MP22}. 
The $\Z$-periodic billard transformation with finite horizon can be seen itself as a $\Z$-extension with centered bounded\footnote{The boundedness of $\phi$ is a consequence of the finite horizon of the Lorentz gas.} step function $\phi : \Mp \mapsto \Z$ over the probabilistic dynamical system $(\Mp,\tp,\mup)$ called the Sinai billiard. The dynamical system $(\Mp,\tp,\mup)$ is the quotient modulo $\Z$ of the previous system $(M,T,\mu)$ and can be seen as a billiard transformation with obstacles on a torus. The ergodic properties of this Sinai billiard $(\Mp,\tp,\mup)$ involve ergodicity and mixing proved by Sinai in \cite{Sinai} as well as exponential mixing for smooth observable through the structure of Young towers proved by Young in \cite{Young}. Other stochastic properties have already been stated for this dynamical system such as Central limit theorem by Bunimovich, Sinai and Chernov (see \cite{sinaiclt}, \cite{buni_sinai_chernov}) and the already mentionned work of 
Pène on perturbed differential equations~\cite{peneESAIMPS} 
among others.
Both systems 
$(\M,\f_t,\nu)$ and 
$(M,T,\mu)$ 
share common stochastic properties that have been studied in previous decades. 
They were proved to be recurrent 
via~\cite{Schmidt} since the step function $\phi$ is $\overline\mu$-centered, their  
(conservative) ergodicity from \cite{penergodic} and \cite{Sim89ergobilliard}. 
The conservative ergodicity, combined with the Hopf ratio ergodic theorem, ensures that
\begin{equation}\label{Hopf1}
\forall G,H\in L^1(\mu),\quad \frac{\sum_{k=0}^{n-1}G\circ T^k}{\sum_{k=0}^{n-1}H\circ T^k}\xrightarrow[ n\rightarrow +\infty]{\mu-a.e.} \frac{\int_{M}G\, d\mu}{\int_{M}H\, d\mu}\, ,
\end{equation}
provided $\int_MH\, d\mu\ne 0$. 
It follows from (a direct adaptation) of~\cite{DSV08} (one can also apply~\cite[Theorem 2.6]{FP23}) that $(M,T,\mu)$ satisfies a Law of Large Numbers of the following form
\begin{equation}\label{LLLNbill1}
\forall T>0,\ \forall H\in L^1(\mu),\quad 
\left(\epsilon^{\frac 12}\sum_{k=0}^{\lfloor t/\epsilon\rfloor-1}
H\circ T^k\right)_{t\in[0,T]}\xrightarrow[ \epsilon \rightarrow 0]{\mathcal L_\mu,\|\|_\infty} \int_MH\, d\mu \, (L'_t(0))_{t\in[0,T]}\, ,
\end{equation}
where $(L'_t(0))_t$ is the (continuous version of the) local time, taken at 0, of the Brownian motion $B'$ limit of 
$(n^{-\frac 12}\sum_{k=0}^{\lfloor nt\rfloor -1}\phi\circ\overline T^k)_t$ as $n\rightarrow +\infty$. 
It is then straighforward to deduce, by ergodicity of $(\overline M,\overline T,\overline \mu)$, the analogous results 
for $(\mathcal M,\varphi,\nu)$, that is
\begin{equation}\label{Hopf2}
\forall g,h\in L^1(\nu),\quad \frac{\int_0^{t/\epsilon}g\circ \varphi_s\, ds}{\int_0^{t/\epsilon}h\circ \varphi_s\, ds}\xrightarrow[ \epsilon\rightarrow 0]{\nu-a.e.} \frac{\int_{\mathcal M}g\, d\nu}{\int_{\mathcal M}h\, d\nu}\, ,
\end{equation}
provided $\int_{\mathcal M}h\, d\nu\ne 0$, and 
\begin{equation}\label{FLLNbill2}
\forall T>0,\ \forall h\in L^1(\nu),\quad 
\left(\epsilon^{\frac 12}\int_0^{t/\epsilon}
h\circ \varphi_s\, ds\right)_{t\in[0,T]}\xrightarrow[ \epsilon \rightarrow 0]{\mathcal L_\nu,\|\|_\infty} \int_{\mathcal M}h\, d\nu \, (\tilde L_{t}(0))_{t\in[0,T]}\, ,
\end{equation}
with $\tilde L_t(0)=\overline\tau L'_{t/\overline\tau}(0)$. 
Furthermore a limit theorem of the following form
for some null integral observables 
has been stated in \cite{PT20} (for very specific observables and for $t=1$) completed by~\cite[Theorem 1.2 with $F(x,\omega)=f(\omega)$]{MP24discrete} 
\begin{equation}\label{TCLbill}
\forall T>0,\quad 
\left(\epsilon^{\frac 14}\sum_{k=0}^{\lfloor t/\epsilon\rfloor-1}
H\circ T^k\right)_{t\in[0,T]}\xrightarrow[ \epsilon \rightarrow 0]{\mathcal L_\mu,\|\|_\infty
} \sqrt{A(H)}\, (B_{L'_t(0)})_{t\in[0,T]}\, ,
\end{equation}
with $A(H):=\sum_{l\in\mathbb Z}\int_MH.H\circ T^{l}\, d\mu$, and $B$ a Brownian motion independent of $(L_t'(0))_{t\ge 0}$. 

\subsection{Geodesic flow on $\mathbb Z$-periodic negatively curved surfaces}
The Geodesic flow on a surface describes
the evolution of a point particle moving at unit velocity on the surface along the geodesic defined by its
initial position and velocity. Geodesic flows on a negatively curved surface have been studied for its dynamical properties since \cite{Anosov67}. 
We consider here the geodesic flow on a $\mathbb Z$-periodic surface $\mathcal R_0$ given by the $\mathbb Z$-cover of a compact negatively curved surface $\mathcal R$.  
We consider the measure preserving dynamical system  $(\M,\f,\nu)$ whose flow $\f$ is given by a geodesic flow over the space $\M:=T^1\R_0$ which is the unit tangent bundle over a $\Z$-cover $\R_0$ of a $C^3$ negatively curved compact surface $\R$ as stated in \cite[section 2.2]{MP22}. Furthermore, this geodesic flow preserves the Liouville measure $\nu$. Bowen and Ratner provided constructions of a Poincaré section $\Mp$ on $T^1\R$ that can be extended into a Poincaré section $M$ over the $\Z$-cover $\M$ and generating a discrete time measure invariant dynamical system $(M,T,\mu)$ with $T$ being the first return map to $M$ of the geodesic flow $\varphi$ 
and $\mu$ the Liouville measure adapted to the section $M$ and satisfying the following properties :
\begin{itemize}
    \item the geodesic flow $(\M,\f,\nu)$ is a suspension flow over $(M,T,\mu)$ with upper and lower bounded roof function $\tau$ corresponding to the first return time on the Poincaré section.
    \item $(M,T,\mu)$ is a $\Z$-extension with centered and bounded step function $\phi: \Mp \mapsto \Z$ over the $\Z$-quotient dynamical system $(\Mp,\tp,\mup)$ whose discrete-time dynamics $\tp$ corresponds to the first return map to $\overline M$ of the unit geodesic flow on $\mathcal R$ and preserves the Liouville measure adapted to $\Mp$.
    \item There is an isomorphism $\pi$ between $(\Mp, \tp, \mup)$ and a mixing two-sided subshift of finite type $(\Sigma, \sigma, m)$ (in particular $\Sigma\subset A^{\Z}$ with $A$ a finite set).
\end{itemize}
The dynamical system $(\Mp,\tp,\mup)$ can be endowed with the dynamical metric $d=d_\beta$ given by 
\begin{align}\label{eqdynamicmetric}
\forall x, y \in \Mp,\quad
    d(x,y):=\beta^{s_0(x,y)},
\end{align}
 where $s_0: \Mp \times \Mp \rightarrow \N$ is the separation time such that for $x,y\in \Mp$, $s_0(x,y)\geq n$ iff 
the $i^{th}$ coordinates of $\pi(x)$ and $\pi(y)$ seen as elements of $A^{\Z}$ coincide for any $i\in \{-n,\dots,n\}$.

For any $\eta\in(0,1)$, there exists $\beta\in(0,1)$ such that
the class of Lipschitz functions with respect to $d_\beta$  comprises the class of $\eta$-H\"older functions for the Riemann metric on hyperbolic surface restrained on the Poincaré section $\Mp$ (see \cite{ratner}).
We also introduce, for $k,n \in \Z$, the partitions $\xi_k^n$ of $\Mp$ such that two elements $x,y\in \overline M$ belong to the same atom of $\xi_{k}^n$ if $\pi(x)_i=\pi(y)_i$ for any $i\in \{k,\dots,n\}$. In particular, if $k=-n$ and $A\in \xi_k^n$,
$$
x,y\in A \Rightarrow s_0(x,y)\geq n.
$$
This coding structure being a mixing subshift of finite type, the dynamical system $(\Mp,\tp,\mup)$ and the step function $\phi$ verify \cite[Hypotheses 3.1 and 3.2]{MP24discrete}.
Furthermore the different limit theorems~\eqref{Hopf1},~\eqref{LLLNbill1},~\eqref{Hopf2},~\eqref{FLLNbill2} and~\eqref{TCLbill} stated for the Lorentz gas are still valid here (see \cite{thomine1,thomine2}).

\section{First setting: Case of integrable functions}\label{secintegrable}
\begin{thm}\label{thmgeneintegrable}
Let $(\varphi_t)_t$ be a flow defined on a measurable space $\mathcal M$ preserving an infinite ($\sigma$-finite) measure $\nu$. We assume furthermore that the dynamical system $(\mathcal M,\f_t,\nu)$ is conservative and ergodic. Let $x_0\in\mathbb R^d$. 
Assume $\tilde f:\mathbb R^d\times\mathcal M\mapsto\mathbb R^d$ is uniformly bounded, Lipschitz continuous in its first variable and integrable in its second variable. 
Assume furthermore that the Lipschitz constant of  $\tilde f$ in its first variable
is $\nu$-integrable. 
Then
\[
\sup_{t\in[0;T]}\left|X_t^\epsilon(x_0,\omega)-x_0-\int_0^t\tilde f(x_0,\varphi_{s/\epsilon}(\omega))\, ds\right|=o \left(\sup_{[0,T]}
\left|\int_0^t\tilde f (x_0,\varphi_{s/\epsilon}(\omega))\, ds\right|\right)
\, ,
\]
for $\nu$-almost every $\omega\in \mathcal M$.\\

If moreover $(\mathcal M,\f_t,\nu)$ satisfies a law of large numbers of the following form
\begin{align}\label{LLN}
\forall g\in L^1(\nu),\quad \mathfrak b_t \int_0^{t}g\circ\varphi_s\, ds
\xrightarrow[ t \rightarrow +\infty]{\mathcal L_\nu}\int_{\mathcal M}g\, d\nu\, Y\, ,
\end{align}
for some non degenerate real valued random variable $Y$,
then 
\begin{align*}
(\mathfrak b_{t/\epsilon}/\epsilon)(X_t^\epsilon(x_0,\cdot)-x_0)
\xrightarrow[ t \rightarrow +\infty]{\mathcal L_\nu}\int_{\mathcal M}
\tilde f(x_0,\cdot)\, d\nu \, Y\, .
\end{align*}
If $x_0$ is such that there exists a family of positive numbers $(\mathfrak a_\epsilon)_{\epsilon>0}$ such that
\begin{align}\label{FLLN}
\forall S>0,\quad 
\left(\mathfrak a_{\epsilon} \int_0^{t/\epsilon}\tilde f(x_0,\varphi_s(\cdot))\, ds\right)_{t\in[0;S]}
\xrightarrow[ \epsilon \rightarrow 0]{\mathcal L_\nu,\, \Vert\cdot\Vert_\infty}
\, (Z_t)_{t\in[0;S]}\, ,
\end{align}
where $Z$ is a continuous process, 
then 
\begin{align}\label{LT0}
\forall S>0,\quad 
\left((\mathfrak a_{\epsilon}/\epsilon)(X_t^\epsilon(x_0,\cdot)-x_0)\right)_{t\in[0;S]}
\xrightarrow[ \epsilon \rightarrow 0]{\mathcal L_\nu,\, \Vert\cdot\Vert_\infty} \left(
Z_t\right)_{t\in[0;S]}\, .
\end{align}
\end{thm}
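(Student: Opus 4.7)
The plan is to derive all three parts from a single Gronwall-type estimate, with the only non-trivial input being that $\nu$-almost surely the Birkhoff integral of the Lipschitz constant grows sublinearly in time (a characteristic feature of conservative ergodic dynamics preserving an infinite measure).

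Set $I_t^\epsilon(\omega):=\int_0^t \tilde f(x_0,\varphi_{s/\epsilon}(\omega))\, ds=\epsilon\int_0^{t/\epsilon}\tilde f(x_0,\varphi_u(\omega))\, du$ and $E_t^\epsilon:=X_t^\epsilon(x_0,\omega)-x_0-I_t^\epsilon(\omega)$. Writing the Cauchy problem~\eqref{pertx} in integral form and subtracting $I_t^\epsilon$, I obtain
\[
E_t^\epsilon=\int_0^t\bigl[\tilde f(X_s^\epsilon,\varphi_{s/\epsilon}(\omega))-\tilde f(x_0,\varphi_{s/\epsilon}(\omega))\bigr]\, ds.
\]
Let $L(\omega)$ denote the $\omega$-dependent Lipschitz constant of $\tilde f(\cdot,\omega)$; by assumption $L\in L^1(\nu)$. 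Then, since $|X_s^\epsilon-x_0|\le |I_s^\epsilon|+|E_s^\epsilon|$,
\[
|E_t^\epsilon|\le \int_0^t L(\varphi_{s/\epsilon}(\omega))\bigl(|I_s^\epsilon|+|E_s^\epsilon|\bigr)\, ds.
\]

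Denoting $I^\ast_T:=\sup_{s\in[0,T]}|I_s^\epsilon|$ and $\Lambda_T^\epsilon:=\int_0^T L(\varphi_{s/\epsilon}(\omega))\, ds=\epsilon\int_0^{T/\epsilon}L\circ\varphi_u\, du$, Gronwall's lemma yields
\[
\sup_{t\in[0,T]}|E_t^\epsilon|\le I^\ast_T\,\Lambda_T^\epsilon\, e^{\Lambda_T^\epsilon}.
\]
The key step is then to prove that $\Lambda_T^\epsilon\to 0$ $\nu$-a.e. as $\epsilon\to 0$. This rests on the fact that for a conservative ergodic $\sigma$-finite flow with $\nu(\mathcal M)=\infty$, one has $\frac{1}{t}\int_0^t L\circ \varphi_u\, du\to 0$ $\nu$-a.e. for every $L\in L^1(\nu)$: applying Hopf's ratio ergodic theorem with any fixed positive reference $h\in L^1(\nu)$, the numerator is equivalent to $\frac{\int L\, d\nu}{\int h\, d\nu}\int_0^t h\circ\varphi_u\, du$, and the latter is $o(t)$ a.e. since $\nu$ is infinite. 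This establishes the first part.

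For the second statement, the law of large numbers~\eqref{LLN} applied to $g=\tilde f(x_0,\cdot)$ combined with $(\mathfrak b_{t/\epsilon}/\epsilon)I_t^\epsilon=\mathfrak b_{t/\epsilon}\int_0^{t/\epsilon}\tilde f(x_0,\varphi_s)\, ds$ gives the prescribed limit for the main term. The first part shows $(\mathfrak b_{t/\epsilon}/\epsilon)E_t^\epsilon=o(1)\cdot(\mathfrak b_{t/\epsilon}/\epsilon)I^\ast_t$, and the latter is tight by the LLN, hence the error is negligible in probability; Slutsky's lemma concludes. For the functional statement, hypothesis~\eqref{FLLN} identifies the limit of $(\mathfrak a_\epsilon/\epsilon)I^\epsilon=\mathfrak a_\epsilon\int_0^{\cdot/\epsilon}\tilde f(x_0,\varphi_s)\, ds$ as $Z$ in the uniform topology (strong convergence in distribution), while
\[
(\mathfrak a_\epsilon/\epsilon)\sup_{t\in[0,S]}|E_t^\epsilon|\le \Lambda_S^\epsilon\, e^{\Lambda_S^\epsilon}\cdot(\mathfrak a_\epsilon/\epsilon)\sup_{t\in[0,S]}|I_t^\epsilon|
\]
converges to $0$ in probability, since the first factor tends to $0$ $\nu$-a.e. and the second is tight. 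Adding the two contributions and invoking Slutsky for the uniform norm gives~\eqref{LT0}.

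The only genuine obstacle is the a.e. sublinearity $\Lambda_T^\epsilon\to 0$; once this is granted, the rest is a uniform Gronwall argument plus a Slutsky-type combination with the hypothesized limit theorems for $I^\epsilon$. I expect no further subtlety beyond verifying that the strong distributional convergence with respect to $\nu$ is preserved under the addition of a term going to $0$ in $\nu$-probability, which is standard.
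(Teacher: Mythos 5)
Your proposal is correct and follows essentially the same route as the paper: the integral form of the equation, a Gronwall bound giving $\sup_t|E_t^\epsilon|\le I^\ast_T\,\Lambda_T^\epsilon e^{\Lambda_T^\epsilon}$, the $\nu$-a.e.\ sublinearity $\Lambda_T^\epsilon\to 0$ via the Hopf ratio ergodic theorem for the integrable Lipschitz constant, and then the change of variables plus a Slutsky argument to transfer the hypothesized (functional) laws of large numbers to $X^\epsilon$. The only detail worth spelling out is the tightness of $(\mathfrak b_{t/\epsilon}/\epsilon)\sup_{s\in[0,t]}|I_s^\epsilon|$ in the marginal case, which follows by dominating the supremum by $\epsilon\int_0^{t/\epsilon}|\tilde f(x_0,\cdot)|\circ\varphi_u\,du$ and applying the law of large numbers hypothesis to the integrable function $|\tilde f(x_0,\cdot)|$.
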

\begin{rem}
It follows from the Hopf ratio ergodic theorem that if the convergence in~\eqref{LLN} holds true for some integrable observable with non null integral, then it holds also true for all integrable observable. If $\lim_{\epsilon\rightarrow 0}\mathfrak a_\epsilon=0$, then the same applies to the functional version of~\eqref{LLN} (given in~\eqref{FLLN} in the particular case where $g=\tilde f(x_0,\cdot)$). Note that the last part of Theorem~\ref{thmgeneintegrable} (with~\eqref{FLLN} and~\eqref{LT0}) can also be applied to null integral functions.
\end{rem}
\begin{proof}[Proof of Theorem~\ref{thmgeneintegrable}]
Using the definition of $X_t^\epsilon$, we start by noticing that
\begin{align*}
 X_t^\epsilon(x_0,\omega)-x_0 
&= \int_0^t\tilde f\left(X_s^\epsilon(x_0,\omega),\varphi_{s/\epsilon}(\omega)\right)\, ds\, ,
\end{align*}
and so
\begin{align}\label{AAA0}
\left\vert X_t^\epsilon(x_0,\omega)-x_0 
-\int_0^t \tilde f(x_0,\varphi_{s/\epsilon}(\omega))\, ds \right\vert\le\int_0^t[\tilde f(\cdot,\varphi_{s/\epsilon}(\omega)]\, |X_s^\epsilon(x_0,\omega)-x_0|\, ds\, ,
\end{align}
where we write here $[H]$ for the Lipschitz constant
of $H:\mathbb R^d\rightarrow \mathbb R^d$. 
Applying the Gr\"onwall lemma, we obtain the following
\begin{equation}\label{AAA1}
\sup_{t\in[0;S]}
\left\vert X_t^\epsilon(x_0,\omega)-x_0 \right\vert
\le \sup_{t\in[0;S]}\left|\int_0^t \tilde f(x_0,\varphi_{s/\epsilon}(\omega))\, ds\right| \exp\left(\int_0^S[\tilde f(\cdot,\varphi_{s/\epsilon}(\omega)]\,  ds\right)\, .
\end{equation}
Since $(\mathcal M,\f_t,\nu)$ is conservative and ergodic and $\nu(\mathcal M)=\infty$, it follows from the Hopf ratio ergodic theorem that
\begin{equation}\label{AAA2}
\int_0^S[\tilde f(\cdot,\varphi_{s/\epsilon}(\omega)]\,  ds=\epsilon\int_0^{S/\epsilon}[\tilde f(\cdot,\varphi_{u}(\omega)]\,  du
=T\frac{\int_0^{T/\epsilon}[\tilde f(\cdot,\varphi_{u}(\omega)]\,  du}{\int_0^{T/\epsilon}\mathbf 1_{\mathcal M}(u)\,  du}\underset{\epsilon \to 0}=o(1)\, ,
\end{equation}
for $\nu$-almost every $\omega\in\mathcal M$.
Thus, combining~\eqref{AAA0},~\eqref{AAA1} and~\eqref{AAA2}, we obtain that
\begin{equation}\label{AAA1b}
\sup_{t\in[0;T]}
\left\vert X_t^\epsilon(x_0,\omega)-x_0 -\int_0^t \tilde f(x_0,\varphi_{s/\epsilon}(\omega))\, ds\right\vert
\le o\left(\sup_{t\in[0;S]}\left|\int_0^t \tilde f(x_0,\varphi_{s/\epsilon}(\omega))\, ds\right| \right)\, ,
\end{equation}
for $\nu$-a.e.\ 
$\omega\in\mathcal M$. 
This ends the proof of the first point of the Theorem. For the two other points, we observe that
\[
\int_0^t\tilde f(x_0,\varphi_{s/\epsilon}(\omega))\, ds=\epsilon\int_0^{t/\epsilon}\tilde f(x_0,\varphi_{u}(\omega))\, du\ ,
\]

\end{proof}
\begin{proof}[Proof of Theorem~\ref{thmexempleintegrable}]
It follows e.g. from~\cite{DSV08} (resp. Theorem~\ref{thmExemple}) 
that~\eqref{FLLN} holds true with $\mathfrak a_{\epsilon}=\sqrt{\epsilon}$ (resp. $\mathfrak a_\epsilon=\epsilon^{\frac 14}$), 
and with $Z_t=\int_{\mathcal M}\tilde f(x_0,\cdot)\, d\nu\, \tilde L_t(0)$ (resp. $Z_t=\sqrt{\tilde a(x_0)}B_{\tilde L_t(0)}$).
\end{proof}
\begin{rem}
In the next Sections, we investigate the second setting (with perturbation or averaging). In that context, analogously to~\eqref{LT0}, we will also get the same normalization as the normalization of $\int_0^t\widetilde f(x_0,\varphi_{s/\epsilon}(\cdot))\, ds=\epsilon\int_0^{t/\epsilon}\tilde f(x_0,\varphi_s)\, ds$.
\end{rem}

\section{Averaging with a non centered perturbation term}\label{secaveragenonnull}
Let $(\mathcal M,\nu,(\varphi_t)_t)$ be a continuous time
infinite ($\sigma$-finite) measure preserving dynamical system. 
Let $f:\mathbb R^d\times\M\rightarrow\mathbb R^d$ and $\overline f:\mathbb R^d\rightarrow\mathbb R^d$ be two maps, both uniformly Lipschitz in their first coordinate $x\in\mathbb R^d$ and uniformly bounded, $f$ being measurable in its second coordinate $\omega\in\M$.
We consider the solution $(W_t)_{t\geq 0}$ of the following differential equation
\begin{align}\label{EDflowW}
\frac{dW_t}{dt} (x)=\overline{f}(W_t(x)),\quad W_0(x)=x\, , \quad\forall t \in \mathbb R_+ .
\end{align}
We perturb this equation by the flow $\varphi_t$
and study the
asymptotic behaviour, as $\epsilon$ goes to $0$, of the solution $X^\epsilon(x,\omega)$ of the 
perturbed differential equation:
\begin{align}\label{EDflow}
\frac{dX_t^\epsilon}{dt} (x,\omega)=f(X_t^\epsilon(x,\omega),\f_{t/\epsilon}(\omega))+\overline{f}(X_t^\epsilon(x,\omega)),\quad X_0^\epsilon(x,\omega)=x\, ,
\end{align}
where $x\in \mathbb R^d$ and $\omega\in\M$. 
This leads us to introduce the following hypothesis
on $f$ and $\bar f$.
\begin{hyp}\label{HHH1b}
Let $f:\mathbb R^d\times\mathcal M\rightarrow \mathbb R^d$ and $\bar f:\mathbb R^d\rightarrow \mathbb R^d$
be two measurable functions, both uniformly bounded and Lipschitz in their first coordinate $x\in\mathbb R^d$.
We consider $(W_t)_t$, $(X_t^\epsilon)_t$,
given by respectively~\eqref{EDflowW},~\eqref{EDflow}.
\end{hyp}

\begin{thm}\label{thm:averagenonnull}
Let $(\varphi_s)_s$ be a flow preserving an infinite ($\sigma$-finite) measure $\nu$ on a measurable space $\mathcal M$. Assume 
$(\mathcal M,\f_t,\nu)$ is conservative ergodic and satisfies a functional law of large numbers of the following form
\begin{align}\label{FLLN}
\forall g\in L^1(\nu),\quad 
\left(\mathfrak a_{\epsilon} \int_0^{t/\epsilon} g\circ\varphi_{ s}\, ds\right)_{t\in[0;T]}
\xrightarrow[ \epsilon \rightarrow 0]{\mathcal L_\mu,\, \Vert\cdot\Vert_\infty}
\int_{\mathcal M}g\, d\nu
\, (Z_t)_{t\in[0;T]}\, ,
\end{align}
where $(Z_t)_{t\ge 0}$ is a continuous increasing process. 
Assume Hypothesis~\ref{HHH1b}, and that $f(\cdot,\omega)$ and $\overline f$ are twice differentiable, with bounded derivative (uniformly in $\omega$) up to order 2. Assume furthermore that the Lipschitz constant $[f]$ and $[D_1f]$
of respectively $f$ and $D_1f$ with respect to their first coordinate are $\nu$-integrable (in $\omega\in\mathcal M$). 
Then 
for any $x\in\mathbb R^d$, the normalized error 
\[
\left(
(\mathfrak a_\epsilon/\epsilon)E_t^\epsilon(x,.):= (\mathfrak a_\epsilon/\epsilon)(X_t^\epsilon(x,.)-W_t(x))\right)_{t\geq 0}
\] 
converges strongly in distribution  with respect to $\nu$, for the \textbf{local topology} of $\mathcal C([0,+\infty))$, to the random process $(Y_t(x))_{t >0}$ given by
\[
\tilde Y_t(x)=\tilde V_{t}(x)
+\int_0^t D\overline f(W_s(x)) \exp\left(\int_{s}^{t}D\overline f(W_u(x))\, du\right)\tilde V_{s}(x) ds\, ,
\]
with 
\[
\tilde V_t(x)=\int_0^th(w_s(x))\, dZ_s\, ,
\]
with $h(x):=\int_{\mathcal M}f(x,\cdot)\, d\nu$.
\end{thm}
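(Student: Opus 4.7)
The strategy is to derive an integral equation for the normalized error $U_t^\epsilon:=(\mathfrak a_\epsilon/\epsilon)E_t^\epsilon$, show that its ``noise'' term $M_t^\epsilon$ converges strongly in distribution to $\tilde V_t$, and pass to the limit in the resulting linear equation by continuity of the solution map. Subtracting~\eqref{EDflowW} from~\eqref{EDflow} and integrating gives
\[
E_t^\epsilon=\int_0^t f(X_s^\epsilon,\varphi_{s/\epsilon})\,ds+\int_0^t\bigl[\overline f(X_s^\epsilon)-\overline f(W_s)\bigr]\,ds.
\]
A first order Taylor expansion (using $\overline f\in C^2_b$) yields $\overline f(X_s^\epsilon)-\overline f(W_s)=D\overline f(W_s)E_s^\epsilon+R_s^\epsilon$ with $|R_s^\epsilon|\lesssim|E_s^\epsilon|^2$. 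Multiplying by $\mathfrak a_\epsilon/\epsilon$ produces the linear integral equation
\[
U_t^\epsilon=M_t^\epsilon+\int_0^t D\overline f(W_s)\,U_s^\epsilon\,ds+\rho_t^\epsilon,\qquad M_t^\epsilon:=\frac{\mathfrak a_\epsilon}{\epsilon}\int_0^t f(X_s^\epsilon,\varphi_{s/\epsilon})\,ds,
\]
with $\rho_t^\epsilon=(\mathfrak a_\epsilon/\epsilon)\int_0^t R_s^\epsilon\,ds$.

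The first step is a uniform a priori bound on $E^\epsilon$, in the spirit of the proof of Theorem~\ref{thmgeneintegrable}. Writing $|f(X_s^\epsilon,\omega)|\le|f(x,\omega)|+[f](\omega)(|W_s-x|+|E_s^\epsilon|)$ and applying Gr\"onwall gives
\[
\sup_{t\le S}|E_t^\epsilon|\le C_S\Bigl(\sup_{t\le S}\Bigl|\int_0^t f(x,\varphi_{s/\epsilon})\,ds\Bigr|+\int_0^S[f](\varphi_{s/\epsilon})\,ds\Bigr)\exp\Bigl(\int_0^S[f](\varphi_{s/\epsilon})\,ds+S[\overline f]\Bigr).
\]
After the change of variable $u=s/\epsilon$, the hypothesis~\eqref{FLLN} applied to the integrable observables $f(x,\cdot)$ and $[f]$ shows that the leading bracket is $O_{\PP}(\epsilon/\mathfrak a_\epsilon)$ and that the exponent is $O_{\PP}(\epsilon/\mathfrak a_\epsilon)\to 0$. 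Hence $\sup_{t\le S}|E_t^\epsilon|=O_{\PP}(\epsilon/\mathfrak a_\epsilon)$, and consequently $\sup_{t\le S}|\rho_t^\epsilon|\le(\mathfrak a_\epsilon/\epsilon)C\int_0^S|E_s^\epsilon|^2\,ds=O_{\PP}(\epsilon/\mathfrak a_\epsilon)\to 0$.

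The heart of the proof is the convergence $M_t^\epsilon\to\tilde V_t$ strongly in distribution in $C([0,S])$. Split $M_t^\epsilon=N_t^\epsilon+(M_t^\epsilon-N_t^\epsilon)$ with $N_t^\epsilon:=(\mathfrak a_\epsilon/\epsilon)\int_0^t f(W_s,\varphi_{s/\epsilon})\,ds$. For the difference, the $[f]$-Lipschitz bound and the a priori estimate above yield
\[
|M_t^\epsilon-N_t^\epsilon|\le\sup_{s\le S}|E_s^\epsilon|\cdot\frac{\mathfrak a_\epsilon}{\epsilon}\int_0^S[f](\varphi_{s/\epsilon})\,ds=O_{\PP}(\epsilon/\mathfrak a_\epsilon)\cdot O_{\PP}(1)\to 0.
\]
To analyse $N_t^\epsilon$, fix a partition $0=t_0<\cdots<t_K=S$ of mesh $\delta>0$. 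On each $[t_k,t_{k+1}]$, $|W_s-W_{t_k}|\le\|\overline f\|_\infty\delta$, so replacing $f(W_s,\omega)$ by $f(W_{t_k},\omega)$ creates an error whose absolute value is majorised by $C\delta\,\mathfrak a_\epsilon\int_0^{S/\epsilon}[f](\varphi_u)\,du=O_{\PP}(\delta)$ by~\eqref{FLLN}. The main piece $\sum_{t_k\le t}\mathfrak a_\epsilon\int_{t_k/\epsilon}^{(t_{k+1}\wedge t)/\epsilon}f(W_{t_k},\varphi_u)\,du$ converges, by the joint strong convergence in~\eqref{FLLN} applied simultaneously to the finitely many observables $f(W_{t_k},\cdot)\in L^1(\nu)$, to $\sum_k h(W_{t_k})(Z_{t_{k+1}\wedge t}-Z_{t_k})$ uniformly in $t\in[0,S]$. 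Sending $\delta\to 0$ and invoking continuity of $Z$ and $h\circ W$, this Riemann--Stieltjes sum converges uniformly in $t$ to $\int_0^t h(W_s)\,dZ_s=\tilde V_t$.

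Finally, the solution map $\Phi:N\mapsto Y$ associated with $Y_t=N_t+\int_0^t D\overline f(W_s)Y_s\,ds$ is a continuous bijection of $C([0,S],\mathbb R^d)$, being a bounded linear Volterra perturbation of the identity; thus the joint convergences $M_t^\epsilon\to\tilde V_t$ and $\rho_t^\epsilon\to 0$ yield $U_t^\epsilon\to\Phi(\tilde V)=\tilde Y$ strongly in distribution on $C([0,S])$ for every $S>0$, which is exactly the announced convergence in the local topology of $C([0,+\infty))$. The closed-form expression $\tilde Y_t=\tilde V_t+\int_0^t D\overline f(W_s)\exp\bigl(\int_s^t D\overline f(W_u)\,du\bigr)\tilde V_s\,ds$ then follows by integrating by parts in the variation-of-constants formula for the linear ODE $dY=D\overline f(W_t)Y\,dt+d\tilde V_t$. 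The main obstacle is the joint uniform convergence $N_t^\epsilon\to\tilde V_t$, since the integrand depends simultaneously on the slow time $s$ (through $W_s$) and the fast time $s/\epsilon$ (through $\varphi_{s/\epsilon}$); the discretization above is what decouples these two scales and reduces the question to~\eqref{FLLN} applied to finitely many fixed observables.
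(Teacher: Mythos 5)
Your proof is correct, but it takes a genuinely different route from the paper's. The paper proves Theorem~\ref{thm:averagenonnull} by \emph{reduction}: it realises $(\mathcal M,\nu,\varphi_t)$ as a suspension flow with constant roof $\tau\equiv 1$ over the time-one map $(\mathcal M,\nu,\varphi_1)$, checks that Assumptions~(a)--(d) of the general suspension-flow Theorem~\ref{thm:mainflow} are satisfied (Assumptions~(c) and (d) are trivial here since $\tau=\overline\tau=1$, so $z^\epsilon=\dot z^\epsilon=0$, and Assumption~(b) is supplied by Theorem~\ref{verifb}), and then invokes the general result. You instead argue directly with the flow in continuous time: derive the linear integral equation $U_t^\epsilon=M_t^\epsilon+\int_0^t D\overline f(W_s)U_s^\epsilon\,ds+\rho_t^\epsilon$, obtain an a priori Gr\"onwall bound $\sup_{[0,S]}|E^\epsilon|=O_{\PP}(\epsilon/\mathfrak a_\epsilon)$ from the FLLN applied to $f(x,\cdot)$ and $[f]$, kill the quadratic remainder $\rho^\epsilon$, and identify the noise $M^\epsilon$ by freezing the slow variable $W_s$ on a fine partition so that the FLLN applies to finitely many fixed observables. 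This discretization is essentially the same device the paper deploys inside Theorem~\ref{verifb}, but your direct route bypasses the whole suspension-flow apparatus (time changes, the $\tilde w^\epsilon$ vs.\ $w$ approximation, Zweim\"uller's lifting lemma), because with $\tau\equiv 1$ those steps collapse. Your argument is therefore more elementary and self-contained; notably, it does not seem to invoke the $\nu$-integrability of $[D_1f]$, which the paper requires only because it is needed for the general $\dot v$-tightness hypothesis of Theorem~\ref{thm:mainflow}. What the paper's route buys is uniformity: the same machinery also handles the genuinely harder cases (random roof functions, $\Z$-extensions), whereas your direct argument is specific to the present theorem.

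Two small points worth tightening. First, when you invoke ``the joint strong convergence in~\eqref{FLLN} applied simultaneously to the finitely many observables $f(W_{t_k},\cdot)$'', note that~\eqref{FLLN} as stated is a one-observable-at-a-time assertion; the joint convergence does follow, because the limit is $(\int g\,d\nu)\,Z_t$ for a \emph{common} process $Z$, so that for any two integrable $g_1,g_2$ with $\int g_1\,d\nu\ne 0$ one has $\mathcal S_\epsilon(g_2)-(\int g_2/\!\int g_1)\mathcal S_\epsilon(g_1)=\mathcal S_\epsilon(g_2-(\int g_2/\!\int g_1)g_1)\to 0$ in probability, and Slutsky gives the joint limit. (The paper's proof of Theorem~\ref{verifb} relies on the same joint convergence when applying the map $\mathcal H_{S,N}$, equally without comment.) Second, your bound on $\rho^\epsilon$ and the vanishing of the Gr\"onwall exponent require $\epsilon/\mathfrak a_\epsilon\to 0$; this is not stated explicitly in Theorem~\ref{thm:averagenonnull} but is implicitly assumed (it is a hypothesis of Theorem~\ref{thm:mainflow}, to which the paper reduces, and holds in all the examples, e.g.\ $\mathfrak a_\epsilon=\sqrt\epsilon$), so you should flag it.
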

Theorem~\ref{thm:averagenonnull} will be proved in Section~\ref{averagingproof}. 
\section{Averaging for suspension flows}\label{sec:suspensionflow}
We consider the continuous time measure preserving dynamics given by the  suspension flow $(\M,\f_s,\nu)$ over 
an infinite ($\sigma$-finite) dynamical system $(M,T,\mu)$ with a measurable roof function $\tau:\Omega\rightarrow (0,+\infty)$ 
bounded above and away from $0$, i.e.
\begin{equation}\label{boundtau}
0<\inf_M\tau\le\sup_M\tau<\infty\, .
\end{equation}
We recall that this system is defined as follows,
\begin{align*}
\M:=\{(y,t)\in M\times[0,+\infty), \ t\in [0,\tau(y))\} \textit{ and } \nu=\mu \otimes Leb
\end{align*}
with $Leb$ being the standard Lebesgue measure. The flow is then defined by 
\begin{align*}
\f_t(y,s)=(T^{n_{t+s}(y)}(y),t-t_{n_{t+s}(y)}(y)),
\end{align*}
where $n_t(y):= \sup\{n \in \N, t_n(y)
\leq t\}$
and $t_m(y):=S_m\tau(y)=\sum_{k=0}^{m-1}\tau\circ T^k(y)$. 
\begin{hyp}\label{HHH0b}
Let $(M,T,\mu)$ be a conservative ergodic infinite ($\sigma$-finite) measure
preserving dynamical system. Let $\tau :M\rightarrow \mathbb [0,+\infty)$ be a measurable function such that $0<\inf_{M}\tau\le\sup_{M}\tau<\infty$. 
Let $(\mathcal M,\f_t,\nu)$ be the suspension flow over $(M,T,\mu)$
with roof function $\tau$.
\end{hyp}
We will assume Hypothesis~\ref{HHH1b}.
We introduce the maps $F:\mathbb R^d\times M\rightarrow\mathbb R^d$ and $\overline F:\mathbb R^d\rightarrow\mathbb R^d$ defined by
\begin{equation}\label{defF}
F(x,\omega):=\int_0^{\tau(\omega)}f(x,(\omega,s))ds\,\quad\mbox{and}\quad
\overline F(x):=\overline\tau\overline f(x)\, ,
\end{equation}
with $\overline\tau>0$ (corresponding to the averaged value of $\tau$). 
We also denote by $(w_t)_{t\geq 0}$ the function defined by $w_{t}:=W_{\overline\tau t}$ for any $t \in \mathbb R_+$. This map corresponds to the solution of the following differential equation
\begin{align}\label{EDflowW2}
\frac{dw_t}{dt} (x)=\overline{F}(w_t(x)),\quad w_0(x)=x\, .
\end{align}
This leads us to introduce the following hypothesis
on $f$ and $\bar f$ (reinforcement of Hypothesis~\ref{HHH1b}).
\begin{hyp}\label{HHH1}
Let $f:\mathbb R^d\times\mathcal M\rightarrow \mathbb R^d$ and $\bar f:\mathbb R^d\rightarrow \mathbb R^d$
be two measurable functions, both uniformly bounded and Lipschitz in their first coordinate $x\in\mathbb R^d$.
We consider $(W_t)_t$, $(X_t^\epsilon)_t$, $F$ and $(w_t)_t$
given by respectively~\eqref{EDflowW},~\eqref{EDflow},~\eqref{defF} and~\eqref{EDflowW2}.
\end{hyp}

\begin{defn}
We will say that a family of continuous processes $((x_t^\epsilon)_{t\ge 0})_{\epsilon >0}$ \textbf{locally converges} when $\epsilon$ tends to $0$ to $(x^0_t)_{t\ge 0}$ in $
\mathcal C([0,+\infty))
$ if, for any 
$T>0$, $((x_t^\epsilon)_{t\in[0;T]
})_{\epsilon >0}$ converges to $(x^0)_{t
\in[0,T]}$ in 
$
(
\mathcal C([0,T])
,\|.\|_\infty)
$. We call \textbf{local topology} the topology induced by this notion of local convergence.    
\end{defn}
In particular to check the local convergence of a process, it is enough to check its convergence for any compact subinterval.
The next result states how a limit theorem for $(E_t^\varepsilon)_t$ can be deduced from the study of
perturbed Birkhoff integrals of $(M,T,\mu)$.
\begin{thm}\label{thm:mainflow}
Assume Hypotheses~\ref{HHH0b} and~\ref{HHH1}.
Let $\overline\mu$ be a probability measure on $ M$
absolutely continuous with respect to $\mu$.  
Let  $(\mathfrak a_\epsilon)_{\epsilon>0}$ be a family of positive real numbers such that 
$\lim_{\epsilon\rightarrow 0}\mathfrak a_\epsilon=0$ and $\lim_{\epsilon\rightarrow 0}\epsilon/\mathfrak a_\epsilon=0$. 
Assume furthermore that 
\begin{itemize}
\item[(a)] the functions $F(.,\omega)$ and $\overline f$ are twice differentiable, uniformly bounded, and with uniformly bounded derivatives of first and second orders;
\item[(b)] the family of continuous processes $(v_t^\epsilon(x,\cdot):=\mathfrak a_\epsilon\int_0^{t/\epsilon} F(w_{\epsilon s}(x),T^{\lfloor s \rfloor})ds)_t$ converges in distribution to
 $(v_t(x,\cdot))_{t\geq 0}$ (with respect to $\overline\mu$ in $\mathcal C([0;S'])$ for all $S'>0$) and the family   $(\dot v_t(x,\cdot))_{t\geq 0}$ is tight (with respect to $\overline\mu$ in $\mathcal C([0;S'])$ for all $S'>0$);
\item[(c)] For all $S'$, $((\epsilon n_{t/\epsilon})_{t\in [0,S']})_{\epsilon}$
converges in probability, with respect to $\overline\mu$, to
$(t/\bar\tau)_{t\in[0,S']}$ (for the uniform metric),
\item[(d)] the families of continuous processes $(z_t^\epsilon:=\sqrt{\epsilon \mathfrak a_\epsilon }\int_0^{t/\epsilon}(\tau\circ T^{\lfloor s\rfloor}-\overline\tau)\overline f(w_{\epsilon s}(x))\, ds)_t$ 
and $(\dot z_t^\epsilon:=\epsilon\int_0^{t/\epsilon}(\tau\circ T^{\lfloor s\rfloor}-\overline\tau)D\overline f(w_{\epsilon s}(x))\, ds)_t$ 
converges in distribution to 0, with respect to $\overline\mu$, (respectively with respect to the uniform topology on $\mathcal C([0;S'])$ for all $S'>0$, and in the sense of finite distributions).
\end{itemize}
Then, 
for any $x\in\mathbb R^d$, the normalized error 
\[
\left(
(\mathfrak a_\epsilon/\epsilon)E_t^\epsilon(x,.):= (\mathfrak a_\epsilon/\epsilon)(X_t^\epsilon(x,.)-W_t(x))\right)_{t\geq 0}
\] 
converges strongly in distribution  with respect to $\nu$, for the \textbf{local topology} of $\mathcal C([0,+\infty))$, to the random process $(Y_t=y_{t/\overline \tau}(x))_{t >0}$ with
\[
Y_t(x)=v_{t/\overline \tau}(x)
+\int_0^t D\overline f(W_s(x)) \exp\left(\int_{s}^{t}D\overline f(W_u(x))\, du\right)v_{s/\overline\tau}(x) ds\, .
\]
\end{thm}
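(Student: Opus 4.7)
The plan is to derive a linear (Duhamel-type) integral equation for the normalized error $\eta_t^\epsilon := (\mathfrak a_\epsilon/\epsilon) E_t^\epsilon$, identify its driving term in the limit using the suspension-flow structure, and then invoke continuity of the solution operator of a linear ODE to conclude. Combining~\eqref{EDflowW} and~\eqref{EDflow} and Taylor-expanding $\bar f$ about $W_s(x)$, I would first write
\[
\eta_t^\epsilon = U_t^\epsilon + \int_0^t D\bar f(W_s(x))\, \eta_s^\epsilon\, ds + \mathcal E_t^\epsilon,\qquad U_t^\epsilon := \frac{\mathfrak a_\epsilon}{\epsilon}\int_0^t f(W_s(x), \varphi_{s/\epsilon}(\omega))\, ds,
\]
where $\mathcal E_t^\epsilon$ collects the second-order contribution from the Taylor expansion of $\bar f$ (of order $(\epsilon/\mathfrak a_\epsilon)|\eta^\epsilon|^2$) together with the Lipschitz contribution from $f(X_s^\epsilon,\cdot)-f(W_s,\cdot)$ (of order $[f(\cdot,\varphi_{s/\epsilon})]\,|\eta^\epsilon|$). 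Combining the $\nu$-integrability of $[f]$ and $[D_1f]$ with the Hopf ratio ergodic theorem (exactly as in the proof of Theorem~\ref{thmgeneintegrable}), the integrated Lipschitz terms are $o(1)$, and a Gronwall bootstrap then renders $\mathcal E_t^\epsilon$ negligible.

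The core step is to prove $U_{\bar\tau t}^\epsilon \to v_t(x)$ in distribution. After the change of variable $u=s/\epsilon$, I would split the resulting integral along excursions of the suspension flow between consecutive visits to the base. For $\omega=(y,0)$, using $\varphi_u(y,0)=(T^k y,\, u-t_k(y))$ on $[t_k,t_{k+1})$,
\[
\mathfrak a_\epsilon\int_0^{t/\epsilon}\! f(W_{\epsilon u}, \varphi_u)\, du = \mathfrak a_\epsilon\sum_{k=0}^{n_{t/\epsilon}-1}\int_0^{\tau(T^k y)}\! f(W_{\epsilon(t_k+s)}(x), (T^k y, s))\, ds + o(1).
\]
On each excursion the slow variable changes by $O(\epsilon)$, so by assumption (a) the inner integral equals $F(W_{\epsilon t_k}(x), T^k y) + O(\epsilon\tau(T^ky)^2)$; thanks to (c), the accumulated error is $o(1)$. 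It then remains to compare $\mathfrak a_\epsilon\sum_k F(W_{\epsilon t_k}(x), T^k y)$ to the analogue featured in $v^\epsilon$, namely $\mathfrak a_\epsilon\sum_k F(w_{\epsilon k}(x), T^k y)$. Since $W_{\epsilon t_k}=w_{\epsilon t_k/\bar\tau}$, a Taylor expansion of $F(\cdot, T^k y)$ around $w_{\epsilon k}$ yields
\[
F(W_{\epsilon t_k}, T^k y) - F(w_{\epsilon k}, T^k y) = \epsilon R_k\, D_1 F(w_{\epsilon k}, T^k y)\,\bar f(w_{\epsilon k}) + O(\epsilon^2 R_k^2),
\]
with $R_k := t_k(y) - k\bar\tau$. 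Summed naively the linear-in-$R_k$ term does not vanish; however, Abel summation rewrites it as Birkhoff-type sums of $(\tau\circ T^k-\bar\tau)$ weighted by smooth functions of $w_{\epsilon k}$, which is precisely the content of assumption (d). Combining this with (c) (to replace $n_{t/\epsilon}$ by $\lfloor t/(\epsilon\bar\tau)\rfloor$) and (b) (convergence and tightness of $v^\epsilon$ and $\dot v^\epsilon$) yields $U_{\bar\tau t}^\epsilon \to v_t(x)$ in the appropriate local topology.

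To conclude, tightness of $\dot v^\epsilon$ together with boundedness of $D\bar f$ controls $\eta^\epsilon$ on every compact interval via Gronwall. The linear integral equation $Y_t = v_{t/\bar\tau}(x) + \int_0^t D\bar f(W_s(x))\, Y_s\, ds$ admits as unique (Duhamel) solution the announced process $Y_t = v_{t/\bar\tau}(x) + \int_0^t D\bar f(W_s(x))\exp\!\bigl(\int_s^t D\bar f(W_u(x))\, du\bigr)\, v_{s/\bar\tau}(x)\, ds$, which is a continuous functional of the driving path $v_{\cdot/\bar\tau}$, so the continuous mapping theorem delivers the desired convergence. I expect the main obstacle to lie in the second paragraph: the naive bound on $\mathfrak a_\epsilon\sum_k\epsilon R_k\,D_1F(w_{\epsilon k},T^ky)\,\bar f(w_{\epsilon k})$ diverges (since $R_k$ typically grows like $\sqrt k$), so the needed cancellation has to be extracted through an Abel rearrangement whose summation-by-parts boundary terms align exactly with the expressions controlled in hypothesis (d); getting the scaling to match (which is why (d) uses $\sqrt{\epsilon\mathfrak a_\epsilon}$ rather than $\mathfrak a_\epsilon$) is the delicate part.
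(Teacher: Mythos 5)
Your overall shape (Duhamel equation for the normalized error, identify the driving term, invoke continuity of the linear solution operator) is the right template and partially mirrors the paper's proof, but there are two genuine gaps.

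First, to kill the remainder $\mathcal E_t^\epsilon$ you invoke ``$\nu$-integrability of $[f]$ and $[D_1f]$ with the Hopf ratio ergodic theorem.'' That integrability is not an assumption of Theorem~\ref{thm:mainflow}: Hypotheses~\ref{HHH0b},~\ref{HHH1} and (a) only give uniform bounds on $f,\bar f$ and their derivatives, and (b)--(d) are distributional hypotheses. Integrability of the Lipschitz constants appears only in Theorem~\ref{thm:averagenonnull} and Theorem~\ref{verifb}, which are corollaries, not in the statement you are proving. Without integrability, your $\mathcal E_t^\epsilon$ contains a term of order $\int_0^t D_1 f(W_s,\varphi_{s/\epsilon})\,\eta_s^\epsilon\,ds$ that is a priori $O(1)$, not $o(1)$. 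The paper's way around this is structurally different: it Taylor-expands $F$ to first order and lets the linear term $\int_0^t D_1F(w_s,T^{\lfloor s/\epsilon\rfloor})\tilde e_s^\epsilon\,ds$ ride inside the Gr\"onwall bound (this is harmless since $\Vert D_1F\Vert_\infty$ is finite by (a) and produces a bounded Gr\"onwall factor), while the analogous term $\int_0^t D_1F(w_s,T^{\lfloor s/\epsilon\rfloor})\tilde y_s^\epsilon\,ds$ inside $b_t^\epsilon$ is shown to be negligible by rewriting it as $(\epsilon/\mathfrak a_\epsilon)\int_0^t \tilde y_s^\epsilon\,d\dot v_s^\epsilon$ and using $\epsilon/\mathfrak a_\epsilon\to 0$ together with tightness of $\dot v^\epsilon$ (hypothesis (b)) and of $\tilde y^\epsilon$. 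Your sketch neither sets up this cancellation nor identifies the $(\epsilon/\mathfrak a_\epsilon)\to 0$ factor that is the reason the conclusion contains no $D_1F$ term.

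Second, you conclude ``the continuous mapping theorem delivers the desired convergence.'' That gives convergence in distribution with respect to the single measure $\overline\mu$ on the base $M$, but the theorem asserts \emph{strong} convergence in distribution with respect to $\nu$ on the suspension $\mathcal M$, i.e.\ with respect to every probability measure absolutely continuous with respect to $\nu$. Upgrading to strong convergence is nontrivial in infinite measure; the paper proves it via Zweim\"uller's theorem (see Lemma~\ref{lemmup}, using that $(\mathfrak a_\epsilon/\epsilon)|\tilde e^\epsilon(\cdot)-\tilde e^\epsilon(T\cdot)|\to 0$ pointwise), and then lifts from $M$ to $\mathcal M$ via the canonical projection and the Kifer-type approximation bounds \eqref{eqxtild}--\eqref{eqwtilde}. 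Without this step the proof proves a strictly weaker statement than claimed. Relatedly, your excursion decomposition replacing $W_{\epsilon t_k}$ by $w_{\epsilon k}$ via Abel summation is sketched but the correcting process you land on is $\sum_k R_k\,D_1F(w_{\epsilon k})\bar f(w_{\epsilon k})$, which is not literally what appears in hypothesis (d) (there the integrand is $\bar f(w_{\epsilon s})$ or $D\bar f(w_{\epsilon s})$, not $D_1F\cdot\bar f$); the paper circumvents this by comparing $\tilde w^\epsilon$ directly to $w$ through Lemma~\ref{wtilde-w}, which reduces precisely to the $z^\epsilon$ of (d), and then controls $c^\epsilon_t=\int_0^t [F(\tilde w^\epsilon_s,\cdot)-F(w_s,\cdot)]ds$ via the modulus of continuity of $\dot v^\epsilon$ (Lemma~\ref{controleGeps}).
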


The statement of Theorem \ref{thmexbirkflo} that provides a limit Theorem for a class of non-stationnary Birkhoff integral then derives from the following corollary whose proof is a direct consequence of Theorem \ref{thm:mainflow}.

\begin{cor}[Convergence of Birkhoff integral for the flow]\label{corbirkflo}
Under the Assumptions of Theorem~\ref{thm:mainflow} with 
$\overline f=1$,
the renormalized perturbed ergodic integral $u_t^\epsilon$ defined for any $\omega\in  \M$ by
$$
u_t^\epsilon(\omega):=(\mathfrak a_\epsilon/\epsilon)
\int_0^{t/\epsilon} f(\epsilon s,\f_s(\omega))\, ds
$$
satisfies the following limit
\begin{align}\label{eqconvbirk}
\left(u_t^{\epsilon}\right)_{t \in [0,T]} \underset{\epsilon \rightarrow 0}{\overset{\mathcal{L}_{\nu}, \|.\|_\infty}{\rightarrow}} \left(v_{t/\overline\tau}\right)_{t \in [0,T]}\, ,
\end{align}
with $v_t$ as in Theorem~\ref{thm:mainflow}.

\end{cor}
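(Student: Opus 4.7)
The plan is to invoke Theorem~\ref{thm:mainflow} with the particular choice $\overline f\equiv 1$. Under this choice, the averaged equation~\eqref{EDflowW} gives $W_t(x)=x+t$ (and $w_t(x)=x+\overline\tau\,t$), and $D\overline f\equiv 0$ causes the integral term in the formula for $Y_t(x)$ to vanish, leaving simply $Y_t(x)=v_{t/\overline\tau}(x)$. Theorem~\ref{thm:mainflow} therefore already yields, for every $x\in\mathbb R^d$, strong convergence in distribution of $((\mathfrak a_\epsilon/\epsilon)E_t^\epsilon(x,\cdot))_{t\ge 0}$ to $(v_{t/\overline\tau}(x))_{t\ge 0}$ with respect to $\nu$ for the local uniform topology.

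Next I would identify $(\mathfrak a_\epsilon/\epsilon)E_t^\epsilon$ with the renormalised Birkhoff integral $u_t^\epsilon$. Integrating~\eqref{EDflow} with $\overline f=1$ gives
\[
X_t^\epsilon(x,\omega)=x+t+\int_0^t f\bigl(X_s^\epsilon(x,\omega),\varphi_{s/\epsilon}(\omega)\bigr)\,ds\,,
\]
so that $E_t^\epsilon(x,\omega)=\int_0^t f(X_s^\epsilon,\varphi_{s/\epsilon})\,ds$. The change of variable $s=\epsilon u$ turns this into
\[
\frac{\mathfrak a_\epsilon}{\epsilon}E_t^\epsilon(x,\omega)=\mathfrak a_\epsilon\int_0^{t/\epsilon}f\bigl(X_{\epsilon u}^\epsilon(x,\omega),\varphi_u(\omega)\bigr)\,du\,,
\]
which coincides with $u_t^\epsilon(x,\omega)$ up to replacing $X_{\epsilon u}^\epsilon$ by the averaged trajectory $W_{\epsilon u}(x)=x+\epsilon u$ (for $x=0$, exactly $\epsilon u$, matching the argument in the definition of $u_t^\epsilon$).

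It will then remain to control the discrepancy
\[
\Delta_t^\epsilon:=\mathfrak a_\epsilon\int_0^{t/\epsilon}\bigl[f(X_{\epsilon u}^\epsilon,\varphi_u)-f(W_{\epsilon u}(x),\varphi_u)\bigr]\,du\,,
\]
which, by the uniform Lipschitz property of $f(\cdot,\omega)$, is bounded by $\mathfrak a_\epsilon\,\sup_{s\le t}|E_s^\epsilon(x,\cdot)|\cdot\int_0^{t/\epsilon}[f](\varphi_u)\,du$. Since the already-established convergence of $(\mathfrak a_\epsilon/\epsilon)E^\epsilon$ furnishes $\sup_{s\le T}|E_s^\epsilon|=O_{\mathbb P}(\epsilon/\mathfrak a_\epsilon)$, this reduces to establishing that $\epsilon\int_0^{T/\epsilon}[f](\varphi_u)\,du\to 0$ in $\nu$-probability. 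Under the $\nu$-integrability of $[f]$ and the flow's functional law of large numbers (implicit in assumption~(b) of Theorem~\ref{thm:mainflow}, and explicitly of $\sqrt{T/\epsilon}$ type in the Lorentz gas and geodesic flow examples), the ergodic integral grows strictly sub-linearly in $T/\epsilon$, so multiplication by $\epsilon$ produces a vanishing quantity.

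The main obstacle I anticipate is precisely this last estimate: because the normalisation $\mathfrak a_\epsilon/\epsilon$ blows up, one must exploit the genuinely sub-linear growth of ergodic averages in the infinite-measure regime to absorb the Lipschitz error. Once this is in hand, Slutsky's lemma combined with the convergence supplied by Theorem~\ref{thm:mainflow} delivers~\eqref{eqconvbirk}.
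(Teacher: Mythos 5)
Your approach diverges from the paper's, and there is a genuine gap.

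The paper proves the corollary by an \emph{augmentation} trick: it considers the system on $\mathbb R^{d+1}$ with $\hat f(x,y,\omega):=(f(y,\omega),0)$ and $\hat{\overline f}(x,y):=(0,1)$, starting at the origin. With this choice the last coordinate of $X^\epsilon_t$ is \emph{exactly} $t$, so the first block solves $\frac{dA^\epsilon_t}{dt}=f(t,\varphi_{t/\epsilon})$ exactly, i.e.\ $A^\epsilon_t=\int_0^t f(s,\varphi_{s/\epsilon})\,ds$ with \emph{no} approximation. Then $(\mathfrak a_\epsilon/\epsilon)E^\epsilon_t(0,\cdot)=(u^\epsilon_t,0)$ identically, and $D\hat{\overline f}\equiv 0$ kills the correction term, so Theorem~\ref{thm:mainflow} gives the claim directly. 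No comparison between $X^\epsilon_{\epsilon u}$ and $W_{\epsilon u}$ is ever needed.

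Your proof instead stays in $\mathbb R^d$ with $\overline f\equiv 1$. Then $X^\epsilon$ feeds back into $f$, and you must control
\[
\Delta^\epsilon_t:=\mathfrak a_\epsilon\int_0^{t/\epsilon}\bigl[f(X^\epsilon_{\epsilon u},\varphi_u)-f(W_{\epsilon u},\varphi_u)\bigr]\,du\,.
\]
You bound this, correctly, by $\mathfrak a_\epsilon\sup_{s\le t}|E^\epsilon_s|\int_0^{t/\epsilon}[f](\varphi_u)\,du\lesssim\epsilon\int_0^{t/\epsilon}[f](\varphi_u)\,du$, and then need this to vanish. That step is where the gap lies: Hypothesis~\ref{HHH1} (inherited by Theorem~\ref{thm:mainflow}) only provides that $[f]$ is \emph{uniformly bounded}, not that $[f]$ is $\nu$-integrable. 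With only a uniform bound $[f]\le K$, the quantity $\epsilon\int_0^{T/\epsilon}[f](\varphi_u)\,du$ is merely bounded by $KT$ and does \emph{not} tend to $0$. The $\nu$-integrability of $[f]$, which you explicitly invoke, is an assumption of Theorem~\ref{thm:averagenonnull} and of Theorem~\ref{thmgeneintegrable}, but not of Theorem~\ref{thm:mainflow} or of this corollary. So as written, your argument assumes strictly more than the corollary does, and cannot close under the stated hypotheses. The augmentation trick of the paper is precisely designed to avoid this Lipschitz comparison altogether, which is why the corollary holds without any integrability condition on $[f]$.
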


\begin{proof}

    We simply consider the couple $(A_t^\epsilon,t)_{t\geq 0}$ solution of the differential equation
    \begin{align}
              \frac{d(A_t^\epsilon,t)}{dt}(\omega)=(f(t,\f_{t/\epsilon}(\omega)),1)\; A_0^\epsilon(\omega)=0 \; \forall t\in [0,S].
    \end{align}
Theorem \ref{thm:mainflow} applies for $x=0\in \mathbb R^{d+1}$ with the two maps
$(x,y,\omega)\mapsto (f(y,\omega),0)$ and $(x,y)\mapsto (0_{\mathbb R^d},1)$. With these choices,
$X_t^\epsilon(x,\cdot):=(A_t^\epsilon,t)
$ and $W_t(x):=(0,t)
$ and thus,
$(\mathfrak a_\epsilon/\epsilon)E_t^\epsilon(0,\cdot)=(u_t^\epsilon(\cdot),0)$. The conclusion of Theorem \ref{thm:mainflow} then provides equation \eqref{eqconvbirk} we were looking for.
\end{proof}

\section{Averaging with non centered perturbation term~: proofs
}\label{averagingproof}
\begin{thm}\label{verifb}
Assume Hypotheses~\ref{HHH0b} and~\ref{HHH1}. Assume 
$(M,T,\mu)$ is conservative ergodic and satisfies a functional law of large number of the following form
\begin{align}\label{FLLN}
\forall G\in L^1(\mu),\quad 
\left(\mathcal S_\epsilon(G)(t):=\mathfrak a_{\epsilon} \int_0^{t/\epsilon} G(T^{\lfloor s\rfloor}(\cdot))\, ds\right)_{t\in[0;T]}
\xrightarrow[ \epsilon \rightarrow 0]{\mathcal L_\mu,\, \Vert\cdot\Vert_\infty}
\int_{M}G\, d\mu
\, (Z_t)_{t\in[0;T]}\, ,
\end{align}
Suppose that Assumption~(a) of Theorem~\ref{thm:mainflow} holds true and that the Lipschitz constant $[F]$ and $[D_1F]$
of respectively $F$ and $D_1F$ with respect to their first coordinate are $\mu$-integrable (in their last coordinate).
Then Assumption (b) of Theorem~\ref{thm:mainflow} holds true with $\mathfrak a_\epsilon$ and with $v_t(x,\cdot)=\int_0^t H(w_u(x))\, dZ_s$
and $\dot v_t(x,\cdot)=\int_0^t D_1 H(w_u(x))\, dZ_s$, with
$H(x):=\int_{M}F(x,\cdot)\, d\mu$, furthermore  $D_1H(x)=\int_{M}D_1F(x,\cdot)\, d\mu$.
\end{thm}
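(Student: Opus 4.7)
The plan is to freeze the slow variable $w_{\epsilon s}(x)$ on a fine deterministic time partition so that each piece reduces to a direct application of the functional law of large numbers~\eqref{FLLN} of the base dynamical system. After the substitution $u=\epsilon s$,
\[
v_t^\epsilon(x,\cdot)=\frac{\mathfrak a_\epsilon}{\epsilon}\int_0^{t}F\bigl(w_u(x),T^{\lfloor u/\epsilon\rfloor}(\cdot)\bigr)\,du\, ,
\]
so the integrand depends on $u$ through a slow factor $w_u(x)$ (varying on scale $1$, with $|\dot w_u|\le\overline\tau\|\overline f\|_\infty$ by~\eqref{EDflowW2}) and a fast factor $T^{\lfloor u/\epsilon\rfloor}$ (varying on scale $\epsilon$). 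Fix $T>0$ and a partition $0=t_0<t_1<\cdots<t_N=T$ of mesh $\delta$, and define the frozen approximation
\[
\tilde v^{\epsilon,\delta}_t:=\sum_{k=0}^{N-1}\Bigl[\mathcal S_\epsilon\bigl(F(w_{t_k}(x),\cdot)\bigr)(t\wedge t_{k+1})-\mathcal S_\epsilon\bigl(F(w_{t_k}(x),\cdot)\bigr)(t\wedge t_k)\Bigr]\, .
\]

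First I would bound the approximation error. Lipschitz continuity of $F$ in its first variable gives, for $u\in[t_k,t_{k+1}]$,
\[
\bigl|F(w_u(x),\omega)-F(w_{t_k}(x),\omega)\bigr|\le [F(\cdot,\omega)]\,|w_u(x)-w_{t_k}(x)|\le C\delta\,[F(\cdot,\omega)]\, ,
\]
with $C=\overline\tau\|\overline f\|_\infty$. Summing over $k$ and taking the supremum in $t\in[0,T]$ yields
\[
\sup_{t\in[0,T]}\bigl|v_t^\epsilon-\tilde v^{\epsilon,\delta}_t\bigr|\le C\delta\,\mathcal S_\epsilon\bigl([F(\cdot,\cdot)]\bigr)(T)\, .
\]
Since $[F(\cdot,\cdot)]$ is $\mu$-integrable by hypothesis,~\eqref{FLLN} applied to this observable shows that $\mathcal S_\epsilon([F])(T)$ converges in distribution to the bounded random variable $\int_M[F]\,d\mu\cdot Z_T$. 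The uniform error is therefore $O(\delta)$ in probability, uniformly in $\epsilon$.

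Next I would analyse the frozen process. The FLLN~\eqref{FLLN} gives, for each fixed $k$, the convergence of $\mathcal S_\epsilon(F(w_{t_k}(x),\cdot))$ in $(\mathcal C([0,T]),\|\cdot\|_\infty)$ to $H(w_{t_k}(x))\cdot Z$. By linearity of $\mathcal S_\epsilon$ in its observable argument, the finitely many coordinate processes $(\mathcal S_\epsilon(F(w_{t_k}(x),\cdot)))_{k=0,\dots,N-1}$ converge jointly to $(H(w_{t_k}(x))Z_\cdot)_k$ against the \emph{same} limiting process $Z$; the continuous mapping theorem then gives
\[
\tilde v^{\epsilon,\delta}_t\xrightarrow[\epsilon\to 0]{\mathcal L_\mu,\|\cdot\|_\infty}\tilde v^\delta_t:=\sum_{k=0}^{N-1}H(w_{t_k}(x))\bigl(Z_{t\wedge t_{k+1}}-Z_{t\wedge t_k}\bigr)\, .
\]
As $\delta\to 0$, $\tilde v^\delta_t$ converges uniformly in $t$ to the Riemann--Stieltjes integral $v_t(x,\cdot)=\int_0^t H(w_u(x))\,dZ_u$; this is legitimate because $u\mapsto H(w_u(x))$ is $C^1$ (hence of bounded variation, as $H$ has bounded first derivative by Assumption~(a) and dominated convergence) and $Z$ is continuous, so one may integrate by parts to define the integral pathwise. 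A standard three-term splitting then upgrades the convergence of $\tilde v^{\epsilon,\delta}$ to convergence of $v^\epsilon$ itself.

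The argument for $\dot v^\epsilon_t:=\mathfrak a_\epsilon\int_0^{t/\epsilon}D_1F(w_{\epsilon s}(x),T^{\lfloor s\rfloor})\,ds$ is word-for-word the same, using the integrability of $[D_1F]$ in place of $[F]$; convergence in distribution in $(\mathcal C([0,T]),\|\cdot\|_\infty)$ implies tightness, so Assumption~(b) is fully established. The identity $D_1H(x)=\int_M D_1F(x,\cdot)\,d\mu$ follows from differentiating under the integral sign, which is valid because $D_1F$ is uniformly bounded and $F$ is $\mu$-integrable (the Lipschitz constant $[F]\in L^1(\mu)$ provides the required domination). The delicate point of the argument is the uniform-in-$t$ control in the passage $v^\epsilon\leadsto\tilde v^{\epsilon,\delta}\leadsto\tilde v^\delta\leadsto v$; the linearity of $\mathcal S_\epsilon$ in its observable, which produces joint convergence against a common limit $Z$, is the structural fact that makes the whole three-step comparison work.
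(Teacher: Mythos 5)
Your proposal is correct and takes essentially the same approach as the paper's proof: you freeze the slow variable $w_{\epsilon s}(x)$ on a deterministic grid of mesh $\delta$ (the paper uses mesh $1/M$), bound the error between $v^\epsilon$ and the frozen process via the FLLN applied to the Lipschitz constant $[F]$ (resp.\ $[D_1F]$), use linearity of $\mathcal S_\epsilon$ and the continuous mapping theorem to pass to the limit in the frozen process against the common process $Z$, and finally let the mesh go to $0$ via the standard $\lim_{\delta\to 0}\limsup_{\epsilon\to 0}$ triangular argument. The only cosmetic difference is that you make the joint-convergence-via-linearity step explicit, whereas the paper absorbs it into the continuity of the map $\mathcal H_{S,N}$.
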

\begin{rem}
Note that, the $\mu$-integrability of $[F]$ and $[D_1F]$
follows from the $\nu$-integrability of $[f]$ and $[D_1f]$.
Indeed, $[F](\omega)\le \int_0^{\tau(\omega)}[f](\omega,s)\, ds$ and it follows from the Lebesgue dominated convergence theorem combined with the integrability of $[f]$  that $D_1F(x,\omega)=\int_0^{\tau(\omega)}D_1f(x,(\omega,s))\, ds$, and so $[D_1F](\omega)\le \int_0^{\tau(\omega)}[D_1f](\omega,s)\, ds$. Then, we also have $H(x)=\int_{\mathcal M}f(x,\cdot)\, d\nu$ and $D_1H(x)=\int_{\mathcal M}D_1f(x,\cdot)\, d\nu$. 
\end{rem}
\begin{proof}
Recall that $[F](\omega)$ (resp. $[D_1F](\omega)$) is the Lipschitz constant of $F(\cdot,\omega)$ (resp. $D_1F(\cdot,\omega)$) for all $\omega\in M$. 
First, the identity $D_1H(x):=\int_{\mathcal M}D_1F(x,\cdot)\, d\mu$ 
follows from the Lebesgue dominated convergence theorem 
combined with the $\mu$-integrability of $[F]$. Now let us prove Assumption~(b) of Theorem~\ref{thm:mainflow}. 
For any positive integer $M$, 
we observe that
\begin{equation}\label{control}
\left| v_t^\epsilon(x,\cdot)-\int_0^{t/\epsilon}F(w_{\lfloor\epsilon s M\rfloor/M}(x),T^{\lfloor s\rfloor})\, ds
\right|\le  M^{-1}\Vert \overline F\Vert_\infty\int_0^{t/\epsilon}[F]\circ T^{\lfloor s\rfloor}\, ds\, .
\end{equation}
We obtain an analogous estimate by replacing $(v,F)$ by $(\dot v,D_1F)$ in the above formula. 
Thus, considering $\overline \mu$ a probability measure absolutely continuous with respect to $\mu$ and applying the law of large numbers to both $[F]$ and $[D_1F]$, 
we conclude from~\eqref{control} and from the analogous estimate for $(\dot v,D_1F)$ that, for all $\eta>0$, 
\begin{equation}\label{control}
\lim_{M\rightarrow +\infty}\limsup_{\epsilon\rightarrow 0}\overline\mu\left(
\sup_{t\in[0,S]}\mathfrak a_\epsilon\left| v_t^\epsilon-\int_0^{t/\epsilon}F(w_{\lfloor\epsilon s M\rfloor/M},T^{\lfloor s\rfloor})\, ds
\right|>\eta\right)=0\, ,
\end{equation}
and
\begin{equation}\label{controlbis}
\lim_{M\rightarrow +\infty}\limsup_{\epsilon\rightarrow 0}\overline\mu\left(
\sup_{t\in[0,S]}\mathfrak a_\epsilon\left| \dot v_t^\epsilon-\int_0^{t/\epsilon}D_1F(w_{\lfloor\epsilon s M\rfloor/M},T^{\lfloor s\rfloor})\, ds
\right|>\eta\right)=0\, .
\end{equation}
Furthermore, for $G=F$ or $G=D_1F$, for all positive integer $M$ and all $S>0$ and all $t\in[0,S]$,
\[
\mathfrak a_\epsilon\int_0^{t/\epsilon}G(w_{\lfloor\epsilon s M\rfloor/M}(x),T^{\lfloor s\rfloor})\, ds
=\mathcal H_{S,N}\left(\mathcal S_\epsilon(G(w_{k/M}(x)),k=0,...,\lfloor SN\rfloor)\right)(t)\, ,
\]
with $\mathcal H_{S,N}:\mathcal C([0,S)\rightarrow (\mathbb R^d)^{\lfloor SN\rfloor+1})\rightarrow \mathcal C([0,S)\rightarrow \mathbb R^d)$
is given by
\[
\mathcal H_{S,N}(g_0,...,g_{\lfloor SN\rfloor})(t)
:=
g_{\lfloor tN\rfloor}(t)-g_{\lfloor tN\rfloor}(\lfloor tN\rfloor/N)+\sum_{k=0}^{\lfloor tN\rfloor -1}(g_k((k+1)/N)-g_k(k/N))\, .
\]
Observe that $\mathcal H_{S,N}$ is continuous, thus
$
\left(\mathfrak a_\epsilon\int_0^{t/\epsilon}G(w_{\lfloor\epsilon s N\rfloor/N}(x),T^{\lfloor s\rfloor})\, ds\right)_{t\in[0,T]}$
converges in distribution to $G_M$ given by
\begin{align*}
G_M(t)&:=\mathcal H_{S,N}\left( (\int_{M}G(w_{k/N}(x,\cdot))d\mu Z_t,k=0,...,\lfloor SN\rfloor) \, \right)(t)\\
&=\int_{M}G(w_{\lfloor tM\rfloor/N}(x),\cdot)\, d\nu (Z_t-Z_{\lfloor tN\rfloor/N})+\sum_{k=0}^{\lfloor tN\rfloor -1}
\int_{\M}G(w_{k/N}(x),\cdot)\, d\mu(Z_{(k+1)/N}-Z_{k/N})\\
&=\int_0^t
\int_{M}G(w_{\lfloor sN\rfloor/N}(x),\cdot)\, d\mu\,  dZ_s\, ,
\end{align*}
We conclude by noticing that this random process converges to
$(\int_0^t\int_{M}G(w_s(x),\omega)\, d\mu(\omega)\, dZ_s)_t$.
\end{proof}
\begin{proof}[Proof of Theorem~\ref{thm:averagenonnull}]
This result can be proved directly by following the steps of Theorem~\ref{thm:mainflow}. We can also see it as a direct application
of Theorem~\ref{thm:mainflow} applied with $(M,T,\mu)=(\mathcal M,\nu,\varphi_1)$ and $\tau=\overline\tau=1$, combined with Theorem~\ref{verifb}. Indeed, considering the projection $\pi:\mathcal M\times[0,1]\rightarrow \mathcal M$ given by $\Pi(\omega,s)=\varphi_s(\omega)$, we observe that $\nu$ is the image measure of $\nu\otimes Leb_{[0,1]}$ by $\Pi$ and that $ \varphi_s\circ \Pi=\Pi\circ \tilde\varphi_s$, where $\tilde\varphi_s$ is the special flow defined on $\mathcal M\times[0,1]$ with roof function $\tau=1$.
Since $\tau=\overline\tau=1$, Assumptions~(c) and~(d) of Theorem~\ref{thm:mainflow} are trivially satisfied with $n_{t/\epsilon}=\lfloor t/\epsilon\rfloor$ and $z^\epsilon=\dot z^\epsilon=0$. Assumption~(a) comes from the fact that $F(x,\omega)=\int_0^1f(x,\varphi_s(\omega))\, ds$ and from the assumptions on $f$ combined with the dominated convergence theorem. Finally Assumption~(b) comes from Theorem~\ref{verifb}. 
\end{proof}
\begin{proof}[Proof of the first part of Theorem~\ref{thmExemple}]
This follows directly from this result, since the dynamical systems considered in Section~\ref{secmodels}
are conservative ergodic and due to the functional Law of Large Numbers given by~\eqref{FLLNbill2} in the case of the Lorentz gas and the analogous statement for the geodesic flow on a $\mathbb Z$-cover of a negatively curved compact surface.
\end{proof}

\section{Averaging for $\mathbb Z$-periodic flows and centered perturbation term}
\label{secZext0}
We consider here the case of continuous time dynamical systems  $(\M,\f_s,\nu)$
satisfying some $\mathbb Z$-periodicity. It will be modeled
by a suspension flow over a $\mathbb Z$-extension $(M=\overline M\times \mathbb Z,T,\mu)$ with a measurable $\mathbb Z$-invariant roof function, i.e. satisfying $(\omega,m)\mapsto\tau(\omega,m)=\tau(\omega)$. 
Our general context will include the two examples of flows introduced in Section~\ref{secmodels}. 

\subsection{General model of $\Z$-extension.}\label{secZext}

In the rest of the paper we consider an infinite measure preserving dynamical system $(M,T,\mu)$ given by a $\Z$-extension of an ergodic probability preserving dynamical system $(\Mp,\tp,\mup)$ with a centered bounded step function $\phi :\Mp \rightarrow \Z$. The dynamics of $(M,T,\mu)$  is described by the following skew product for any $(\omega,m)\in M:=\Mp \times \Z$ by :
\begin{align*}
    T(\omega,m)=(\tp (\omega),m+\phi(\omega)).
\end{align*}
When iterated, the dynamics brings the following identity
\begin{align*}
    T^n(\omega,m)=(\tp^n (\omega),m+S_n\phi(\omega)),
\end{align*}
where $S_n\phi:=\sum_{k=0}^{n-1}\phi\circ\tp^k$. The map $T$ preserves the product measure $\mu:=\mup \otimes m$  on  $\Mp\times \Z$  where $m$ is the counting measure on $\Z$. 
The assumption that $\phi$ is centered (i.e $\int_{\Mp} \phi\,  d \mup=0$) ensures that the Birkhoff sum $S_n\phi$ is recurrent as a random walk on $\Z$ and since $(\Mp,\tp,\mup)$ is ergodic, it implies that $(M,T,\mu)$ is a recurrent dynamical systems (see \cite{Schmidt}).
We summarize below the above assumptions on the dynamical system $(\mathcal M,\f_t,\nu)$.
\begin{hyp}\label{HHH0}
Let $(\Mp,\mup,\tp)$ be an ergodic probability preserving dynamical system. Let $\phi:\Mp\rightarrow\mathbb Z^d$
and $\tau :\Mp\rightarrow \mathbb (0,+\infty)$ be two bounded measurable functions with $\phi$ centered and $0<\inf_{\Mp}\tau\le\sup_{\Mp}\tau<\infty$. Let $(M,T,\mu)$ be the $\mathbb Z$-extension of $(\Mp,\mup,\tp)$ by $\phi$ and $(\mathcal M,\f_t,\nu)$ be the suspension flow over $(M,T,\mu)$
with roof function $\tau:(\omega,m)\mapsto \tau(\omega,m)=\tau(\omega)$.
We further assume that $(M,T,\mu)$ is conservative ergodic.
In this setting we identify $\overline\mu$ with the measure $\overline\mu\otimes\delta_0$ which is absolutely continuous with respect to $\mu$. We set
\[
\overline\tau:=\int_{\Mp}\tau\, d\mup\, .
\]

\end{hyp}
Note that in this context,
\begin{equation}\label{Smtau}
t_m(\omega,\ell)=S_m\tau(\omega,\ell)=
\sum_{k=0}^{m-1}\tau\circ T^k(\omega,\ell)=\sum_{k=0}^{m-1}\tau\circ \overline T^k(\omega)\, .
\end{equation}
\begin{cor}\label{corbillard}
Assume Hypotheses~\ref{HHH0} and~\ref{HHH1} and Assumptions~(a), (b) and (d)
 of Theorem~\ref{thm:mainflow} with the normalization
$\mathfrak a_\epsilon=\epsilon^{\frac 14}$ and the limit process
\begin{equation}\label{vu}
v_{u
}(x,\cdot)= \int_0^{u
} \sqrt{a(w_s(x))}\, dB_{L_s'(0)}
\end{equation}
for some continuous function $a:\mathbb R^d\rightarrow S_d(\mathbb R)
$ (with $S_d(\mathbb R)$ the set of real valued nonnegative symmetric matrices), where $(L'_t(x))_{x\in\mathbb R,t\geq 0}$ is a continuous version of the local time associated to the Brownian motion $B'$ given by the following  limit
\begin{equation}\label{Gaussianlimit}
\forall S>0,\quad 
(S_{\lfloor nt\rfloor}\phi/\sqrt{n})_{t\in[0,S]} \xrightarrow[ \epsilon \rightarrow 0]{\overline\mu,\|\|_\infty} (B'_t)_{t\in[0,S]}\, ,
\end{equation}
and where $B$ is a standard Brownian motion independent of $B'$.
Then, setting $\tilde a:=a/\bar\tau$, 
for any $x\in\mathbb R^d$,
the normalized error 
\[
\left(\epsilon^{-3/4}E_t^\epsilon(x,.):= \epsilon^{-3/4}(X_t^\epsilon(x,.)-W_t(x))\right)_{t\geq 0}
\] 
converges strongly in distribution  with respect to $\nu$ and for the \textbf{local topology} of $\mathcal C([0,+\infty))$
to the random process $(Y_t(x,\cdot))_{t \ge 0}$ given by
\begin{align}
&Y_t(x,\cdot)=  \int_0^{t} \sqrt{\tilde a(W_s(x))}\, dB_{\tilde L_s(0)}
\\
& +\int_0^t D\overline f(W_s(x))  \int_0^{s} \sqrt{\tilde a(W_u(x))}\, dB_{\tilde L_u(0)}(\omega)\exp\left(\int_{s}^{t}D\overline f(W_u(x))du\right)\, ds,
\end{align}
where the process $(\tilde L_s(0))_{s\geq 0}$ 
is a continuous version of the local time (taken at 0) of a Brownian motion $(\tilde B_s=B'_{s/\bar\tau})_{s\geq 0}$ 
limit in distribution 
of the following Birkhoff sum :
\begin{align}\label{eqbrowniencontin}
\forall S>0,\quad 
(\epsilon^{1/2}S_{n_{t/\epsilon}}\phi)_{t\in[O,S]} \xrightarrow[ \epsilon \rightarrow 0]{\mathcal L_\nu,\|\|_\infty}(\tilde B_t)_{t\in[0,S]},
\end{align}
with $n_t:= \sup\{n \in \N\, :\, \sum_{k=0}^{n-1}\tau\circ T^k\leq t\}$.
Furthermore, for any $x\in \mathbb R^d$,
if $a(x)$ is the asymptotic variance matrix of $F$ in the following sense
\begin{equation}\label{TCLinf}
    \left(\epsilon^{1/4}\sum_{k=0}^{\lfloor t/\epsilon\rfloor-1}F(x,T^k(.))\right)_t\xrightarrow[ n \rightarrow +\infty]{\mathcal L_\nu,\|\|_\infty}\left(\sqrt{ a(x)}B_{
     L_t(0)}\right)_t\, ,
\end{equation}
then $\tilde a(x)=a(x)/\overline\tau$ is the asymptotic variance of $f$ given by
\begin{align}\label{eqtilda}
    \left(\epsilon^{1/4}\int_0^{t/\epsilon}f(x,\f_s(.))\, ds\right)_t\xrightarrow[ \epsilon \rightarrow 0]{\mathcal L_\nu,\|\|_\infty}\left(\sqrt{\tilde a(x)}B_{
    \tilde L_t(0)}\right)\, .
\end{align}
\end{cor}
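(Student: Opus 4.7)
The plan is to apply Theorem~\ref{thm:mainflow} and then translate its conclusion into the form announced here. The only hypothesis left to check is (c). Since by Hypothesis~\ref{HHH0} the roof function depends only on the $\Mp$-coordinate, \eqref{Smtau} writes $t_m=S_m\tau$ for the Birkhoff sums of $\tau$ over the ergodic probability preserving system $(\Mp,\tp,\mup)$. Birkhoff's theorem gives $S_m\tau/m\to\overline\tau$ almost surely; inverting this and using the boundedness of $\tau$ from above and below yields the uniform-in-$t$ convergence $\epsilon n_{t/\epsilon}\to t/\overline\tau$ in probability, which is exactly (c). Theorem~\ref{thm:mainflow} then provides the convergence of $(\epsilon^{-3/4}E_t^\epsilon)_{t\ge 0}$ to $(Y_t=y_{t/\overline\tau}(x))_{t\ge 0}$ with $Y_t$ given by the general formula of Theorem~\ref{thm:mainflow}.

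Next I would rewrite this limit in the announced form. The change of variable $r=u/\overline\tau$ in the non-stochastic integral, together with the identity $w_r=W_{\overline\tau r}$, directly produces the non-stochastic term of the announced $Y_t$. For the noise term, substituting \eqref{vu} into $v_{t/\overline\tau}(x)$ and applying $s=u/\overline\tau$ gives
\[
v_{t/\overline\tau}(x,\cdot)=\int_0^t\sqrt{a(W_u(x))}\, dB_{L'_{u/\overline\tau}(0)}\, .
\]
The occupation times formula applied to $\widetilde B_s=B'_{s/\overline\tau}$ yields $\widetilde L_u(0)=\overline\tau\, L'_{u/\overline\tau}(0)$; combined with $\widetilde a=a/\overline\tau$, the two stochastic integrals $\int_0^t\sqrt{a(W_u)}\, dB_{L'_{u/\overline\tau}(0)}$ and $\int_0^t\sqrt{\widetilde a(W_u)}\, dB_{\widetilde L_u(0)}$ share the same quadratic variation $\int_0^t a(W_u)\, dL'_{u/\overline\tau}(0)$, and by Dambis--Dubins--Schwarz they coincide in distribution (up to the choice of driving Brownian motion), as announced.

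Finally I would verify \eqref{eqbrowniencontin} and \eqref{eqtilda}. For \eqref{eqbrowniencontin}, the convergence $\epsilon n_{t/\epsilon}\to t/\overline\tau$ combined with the tightness inherited from \eqref{Gaussianlimit} lets one replace $n_{t/\epsilon}$ by $\lfloor t/(\epsilon\overline\tau)\rfloor$ up to a uniformly negligible term; then \eqref{Gaussianlimit} with $n=1/(\epsilon\overline\tau)$ gives $\epsilon^{1/2}S_{n_{t/\epsilon}}\phi\Rightarrow B'_t/\sqrt{\overline\tau}$, a process equidistributed with $\widetilde B_t=B'_{t/\overline\tau}$. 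For \eqref{eqtilda}, the decomposition
\[
\int_0^{t/\epsilon}f(x,\f_s(\omega))\, ds=\sum_{k=0}^{n_{t/\epsilon}-1}F(x,T^k\omega)+O(\|F\|_\infty)
\]
combined with \eqref{TCLinf} (applied with $\epsilon$ replaced by $\epsilon\overline\tau$) and the same deterministic time change produces the limit $\overline\tau^{-1/4}\sqrt{a(x)}\, B_{L'_t(0)}$, which by the scaling identities of the previous paragraph has the distribution of $\sqrt{\widetilde a(x)}\, B_{\widetilde L_t(0)}$. The delicate point throughout is the careful bookkeeping of the $\overline\tau$-factors across the three successive time changes (ergodic averaging of $\tau$, Brownian rescaling, and local-time rescaling): they must cancel precisely so that the suspension-base statement of Theorem~\ref{thm:mainflow} assembles into the suspension-flow statement of the corollary.
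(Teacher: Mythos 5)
Your proposal is correct and follows the same overall strategy as the paper: verify Assumption~(c) via ergodicity of $(\Mp,\tp,\mup)$, apply Theorem~\ref{thm:mainflow}, reparametrize $v_{t/\overline\tau}$ using $\tilde L_t(0)=\overline\tau L'_{t/\overline\tau}(0)$ and $\tilde a=a/\overline\tau$, and then derive~\eqref{eqbrowniencontin} and~\eqref{eqtilda} by a random change of time. The two small presentational differences are worth noting: where you invoke Dambis--Dubins--Schwarz to identify the two stochastic integrals by matching quadratic variation, the paper instead explicitly introduces the rescaled Brownian motion $B''_t=\sqrt{\overline\tau}B_{t/\overline\tau}$ so that the equality of the integrals is literal and automatically coherent (jointly with $\tilde L$ and with independence from $B'$) rather than only in law; and where you replace $n_{t/\epsilon}$ by the deterministic $\lfloor t/(\epsilon\overline\tau)\rfloor$ using tightness, the paper simply invokes the random change of time theorem from Billingsley's book, which packages that same tightness/modulus-of-continuity argument. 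Both routes land in the same place, and your cautionary remark about tracking the $\overline\tau$-factors through the successive rescalings is exactly the delicate point the paper's bookkeeping is organized around.
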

Before proving Corollary~\ref{corbillard} (the proof is given in Subsection~\ref{sec:proofcoro}), we
make some comments on Assumptions~(a), (b) and (d) of Theorem~\ref{thm:mainflow} 
in the present context of $\mathbb Z$-periodic flow satisfying Hypothesis~\ref{HHH0} and we apply Corollary~\ref{corbillard} to prove Theorem~\ref{thmExemple}.
\begin{itemize}
\item Recall that Assumption~(a) of Theorem~\ref{thm:mainflow}
deals with the smoothness of $\overline f:\mathbb R^d\mapsto\mathbb R^d$ and of $F:\mathbb R^d\times M\rightarrow \mathbb R^d$ given by
$F(x,\omega):=\int_0^{\tau(\omega)}f(x,(\omega,s))\, ds$. Hence, this assumption is related to the smoothness of $f$ in its first variable $x\in\mathbb R^d$.
\item 
If $(M,T,\mu)$ and $F$
satisfy the assumptions of \cite[Theorem 3.4]{MP24discrete} in which some discrete time slow fast systems (perturbed by $(M,T,\mu)$) are studied, then
Assumption~(b) holds true with $\mathfrak a_\epsilon=\epsilon^{\frac 14}$ and with $v$ given by~\eqref{vu}
and with $a(x)$ the asymptotic variance matrix
of $F(x,\cdot)$ given by the following sum of the self-covariance matrices of the stationary process $(F(x,T^{l}(\cdot)))_l$
\[
a(x):=\left(\frac 12\sum_{l \in \Z}
\int_{M}[F_i(x,.)F_j(x,T^{|l|}(.))+F_j(x,.)F_i(x,T^{|l|}(.))]\, d\mu
\right)\, ,
\]
for any $x \in \mathbb R^d$.
\item Finally, due to~\eqref{Smtau}, Assumption~(d) of Theorem~\ref{thm:mainflow} deals with the error term in 
averaging of the respective perturbed differential equations
\[\frac{d(z_t^\epsilon,w_t)(x,\omega)}{dt}=\left(\tau(\tp^{\lfloor t/\epsilon\rfloor}(\omega))\overline f(w_t(x)),\overline F(w_t(x))\right)
\]
and
\[\frac{d(\dot z_t^\epsilon,w_t)(x,\omega)}{dt}=\left(\tau(\tp^{\lfloor t/\epsilon\rfloor}(\omega))\overline Df(w_t(x)),\overline F(w_t(x)) \right)\, .
\]
These are  differential equation perturbed by the probability preserving dynamical system $(\Mp,\mup,\tp)$).  
It was proved in~\cite{kasminski,kifer,peneESAIMPS} that, under general assumptions, $(z^\epsilon/\sqrt{\epsilon})_\epsilon$ and $(\dot z^\epsilon/\sqrt{\epsilon})_\epsilon$ converge in distribution. In particular
Assumption~(d) holds true under \cite[Hypothesis 3.1]{MP24discrete} provided $\tau$ is Lipschitz.
\end{itemize}
In particular the conclusions of Corollary \ref{corbillard} hold true on examples given in the previous section \ref{secmodels}, namely the $\Z$-periodic Lorentz gas and the geodesic flow over a $\Z$-cover of a smooth negatively curved compact surface, thus leading to the statement of Theorem \ref{thmExemple} in the introduction. 
More precisely, as a consequence of Corollary~\ref{corbillard} and of~\cite[Theorem 1.2]{MP24discrete}, we obtain

\begin{thm}[Precise statement of the second part of  Theorem~\ref{thmExemple}]\label{thmExempleprecis}
Let $(\M,\f_t,\nu)$ be one of the two examples considered in Section~\ref{secmodels} (the $\Z$-periodic Lorentz gas flow with finite horizon or the geodesic flow on a $\Z$-cover of a compact negatively curved $C^2$ surface) and suppose that $f: \mathbb R^d\times\M \to \mathbb R^d$ and $\overline f : \mathbb R^d \to \mathbb R^d$ are two bounded $C^2$ maps in their first coordinate with bounded successive derivatives such that $f$ is 
$\eta$-H\"older continuous
in its second variable and  there exists $\e>0$ such that
\begin{itemize}
    \item[(i)] for any $x\in \mathbb R^d$, $\int_{\M} f(x,.)d\nu=0$,
    \item [(ii)] $\sup_{x\in\mathbb R^d}\sum_{m\in\mathbb Z}|1+|m||^{2(1+\e)}[\sup_{\mathcal M_m}| f(x,\cdot)|+\sup_{\mathcal M_m}| D_1f(x,\cdot)]<\infty$,
    where we write $\mathcal M_m$ for the set of $(\omega,m',s)\in\mathcal M$ such that $m'=m$,
    \item[(iii)] $\sup_{x\in \mathbb R^d} \sum_{m\in\mathbb Z}
    [f(x,\cdot)]_{H(\mathcal M_m)}
<\infty$
where we write $[h]_{H(\mathcal M_m)}$ for the $\eta$-H\"older constant of $h_{|\mathcal M_m}$. 
\end{itemize}
Then, for any $x\in\mathbb R^d$ and $S>0$, the normalized error 
\[
\left(\epsilon^{-3/4}E_t^\epsilon(x,.):= \epsilon^{-3/4}(X_t^\epsilon(x,.)-W_t(x))\right)_{t\in[0,S]}
\] 
converges strongly in distribution with respect to $\nu$ (and for the uniform metric) to the random process $(Y_t)_{t \in [0,S]}$ given by
\begin{align}
&Y_t(x,\omega)=  \int_0^{t} \sqrt{\tilde a(W_s(x))}dB_{\tilde L_s(0)}(\omega)\\
& +\int_0^t D\overline f(W_s(x))  \int_0^{s} \sqrt{\tilde a(W_u(x))}dB_{\tilde L_u(0)}(\omega)\exp\left(\int_{s}^{t}D\overline f(W_u(x))du\right)ds,
\end{align}
where the process $(\tilde L_s(0))_{s\geq 0}$ is 
a continuous version of the local time, taken at 0, of the Brownian motion $(\tilde B_s)_{s\geq 0}$ appearing 
as the following limit
\begin{align}\label{eqbrowniencontin1}
    \left(\epsilon^{1/2}\Psi\circ \f_{t/\epsilon}\right)_{t\geq 0} \xrightarrow[ \epsilon \rightarrow 0]{\mathcal L_\nu,\|\|_\infty}(\tilde B_t)_{t\geq 0},
\end{align}
and the map $\tilde a=a/\overline\tau
$ is the asymptotic variance induced by $f$, i.e. for any $x\in \mathbb R^d$,
\begin{align}\label{eqtilda1}
    \frac{1}{T^{1/4}}\int_0^Tf(x,\f_s(.))ds\xrightarrow[ \epsilon \rightarrow 0]{\mathcal L_\nu,\|\|_\infty}\sqrt{\tilde a(x)}\tilde B_{\tilde L_1(0)}.
\end{align}
\end{thm}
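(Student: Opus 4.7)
The plan is to apply Corollary \ref{corbillard} with the normalization $\mathfrak a_\epsilon = \epsilon^{1/4}$. Both examples of Section~\ref{secmodels} present $(\mathcal M,\varphi_t,\nu)$ as a suspension flow over a $\mathbb Z$-extension $(M,T,\mu)$ of a probability preserving system $(\overline M,\overline T,\overline \mu)$, with bounded roof function $\tau$ bounded away from $0$ and centered bounded step function $\phi$, so Hypothesis~\ref{HHH0} is fulfilled. Hypothesis~\ref{HHH1} is immediate from the $C^2_b$ regularity of $f$ and $\overline f$. It therefore suffices to verify Assumptions~(a), (b) and~(d) of Theorem~\ref{thm:mainflow} and identify the limit process.

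For~(a), set $F(x,\omega) := \int_0^{\tau(\omega)} f(x,(\omega,s))\, ds$. Since $\tau$ is bounded and $f$ is $C^2_b$ in its first variable with uniform bounds, differentiation under the integral gives $D_1 F(x,\omega) = \int_0^{\tau(\omega)} D_1 f(x,(\omega,s))\, ds$ and the analogous formula for $D_1^2 F$, with the requisite uniform bounds; $\overline f$ is $C^2_b$ by assumption.

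For~(b), I would deduce convergence of $v_t^\epsilon$ and tightness of $\dot v_t^\epsilon$ from the discrete-time result \cite[Theorem 3.4]{MP24discrete} applied to the observable $F$ (respectively $D_1 F$) on $(M,T,\mu)$. This requires transferring the assumptions (i)--(iii) from $f$ to $F$. Assumption~(i) passes to $\int_M F(x,\cdot)\, d\mu = 0$ by Fubini; the decay~(ii) passes to the analogous $\ell^1$-summable control of $\sup_{M_m}|F(x,\cdot)|$ and of $\sup_{M_m}|D_1 F(x,\cdot)|$ because $M_m$ sits at the base of $\mathcal M_m$ and $\tau$ is bounded; and~(iii) passes to Hölder regularity of $F(x,\cdot)$ on each cell, with summable constants. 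Assumption~(d) follows from classical averaging results for probability preserving systems applied to the centered observable $\tau-\overline\tau$ on $(\overline M,\overline T,\overline \mu)$: since $\tau$ is Lipschitz on the Sinai billiard and on the Bowen--Ratner section of the geodesic flow, and $\overline f\in C^2_b$, both $z_t^\epsilon$ and $\dot z_t^\epsilon$ converge to $0$ at rate $\sqrt\epsilon$ (faster than needed) by~\cite{kifer,peneESAIMPS}. The identification $\tilde a(x) = a(x)/\overline \tau$ stated in~\eqref{eqtilda1} then follows from the Abramov-type relation $\int_0^{t/\epsilon} f(x,\varphi_s(\omega))\, ds = \sum_{k=0}^{n_{t/\epsilon}-1} F(x,T^k(\omega)) + O(1)$, combined with the ratio ergodic equivalence $n_{t/\epsilon}\sim t/(\epsilon\overline\tau)$, substituted into~\eqref{TCLinf}.

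The main obstacle is the transfer of the Hölder condition~(iii) in step~(b): one has to use the dynamical metric $d_\beta$ on $\overline M$ and the geometry of the Poincaré section (unstable/stable manifolds, smoothness of $\tau$) to show that points of $M$ which are $d_\beta$-close give rise, under the suspension, to points of $\mathcal M$ that are close in the ambient Hölder sense, thereby controlling the dynamical Hölder norm of $F(x,\cdot)$ on each cell $M_m$ by the $\eta$-Hölder norm of $f(x,\cdot)$ on $\mathcal M_m$ with $m$-summable constants. Once this estimate is in place, the discrete-time theorem delivers the limit~\eqref{vu} with the announced $a(x)$, and Corollary~\ref{corbillard} produces the process $(Y_t(x,\cdot))_{t\in[0,S]}$ in the form claimed.
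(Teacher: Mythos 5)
Your proposal takes the same route as the paper's proof: Corollary~\ref{corbillard} with $\mathfrak a_\epsilon=\epsilon^{1/4}$, Assumption~(a) by dominated convergence, Assumption~(b) from \cite[Theorem 3.4]{MP24discrete} applied to both $F$ and $D_1F$ after transferring (i)--(iii) to the section, and Assumption~(d) from the finite-measure averaging literature with $\tau$ Lipschitz. The H\"older-transfer obstacle you flag is resolved in the paper by the single bound $\vert F(x,\omega,m)-F(x,\omega',m)\vert\le K'\,d(\omega,\omega')\big(\Vert\tau\Vert_\infty[f(x,\cdot)]_{H(\mathcal M_m)}+\Vert\tau\Vert_{Lip}\sup_{\mathcal M_m}\vert f(x,\cdot)\vert\big)$, obtained by splitting the $s$-integral at the smaller of $\tau(\omega),\tau(\omega')$ and choosing $\beta$ in $d_\beta$ so that both the ambient distance along flow orbits and $\tau$ are controlled by $d_\beta$ --- precisely the geometric mechanism your sketch anticipates, so the approach is sound; you just need to write this estimate out.
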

\begin{proof}
We will apply Corollary~\ref{corbillard}. 
Assumption~\eqref{Gaussianlimit} has been proved in~\cite{sinaiclt,buni_sinai_chernov} for the Lorentz gas flow and in~\cite{ratner} for the geodesic flow. 
Since $(S_{n_t}\phi-\Psi\circ\varphi_t)_t$ is uniformly bounded,
~\eqref{eqbrowniencontin1} will follow from~\eqref{eqbrowniencontin}. 
Assumption~(a) of Theorem~\ref{thm:mainflow} follows from our smoothness assumptions on $f$ and $\overline f$. As explained above, Assumption~(c) of Theorem~\ref{thm:mainflow} follows from in~\cite{kasminski,kifer,peneESAIMPS}.
Finally, let us prove that Assumption~(b) of Theorem~\ref{thm:mainflow} follows from~\cite[Theorem 3.4]{MP24discrete}. The dynamical system $(\Mp,\mup,\tp)$
satisfying~\cite[Hypothesis 3.2]{MP24discrete} and $\phi$ being bounded, it remains to check the four items of~\cite[Theorem 3.4]{MP24discrete} hold true for the observables $F$ and $D_1F$
respectively. Recall that $F(x,\omega)=\int_0^{\tau(\omega)}f(x,\omega,s)\, ds$. 
First, for every $x\in\mathbb R^d$, the fact that $f(x,\cdot)$ has null $\nu$-integral implies that $F(x,\cdot)$ has null $\mu$-integral.  The fact that $F$ and $D_1F$ are uniformly Lipschitz in their first variable follows from
the fact that $f$ and $D_1f$ are so. Assumption (ii) of Theorem~\ref{thmExempleprecis}  
ensures that
\[
\sup_{x\in\mathbb R^d}\sum_{m\in\mathbb Z}|1+|m||^{2(1+\e)}\left[\Vert F(x,\cdot,m)\Vert_\infty+\Vert D_1F(x,\cdot,m)\Vert_\infty\right]<\infty\, ,
\]
which gives the third item of ~\cite[Theorem 3.4]{MP24discrete} for both $F$ and $D_1F$.\\
Finally, for all $x\in\mathbb R^d$ and $m\in\mathbb Z$, and all
$\omega,\omega'\in \overline M$ such that $\tau(\omega)\le\tau(\omega')$,
\begin{align*}
\left\vert F(x,\omega,m)-F(x,\omega',m)\right\vert
&\le \int_0^{\tau(\omega)}|f(x,\omega,m,s)-f(x,\omega',m,s)|\, ds
+|\tau(\omega')-\tau(\omega)|\, \Vert f(x,\cdot,m,\cdot)\Vert_\infty\\
&\le K'd(\omega,\omega')\left(\Vert\tau\Vert_\infty
[f(x,\cdot)]_{H(\mathcal M_m)}
+\Vert \tau\Vert_{Lip}\, \sup_{\mathcal M_m}|f(x,\cdot)|\right)\, ,
\end{align*}
where we used the fact that we can choose $\beta$ in the definition~\eqref{eqdynamicmetric} (or in the metric $d$ in~\cite{MP24discrete}) and $K'>0$ such that the euclidean/riemmannian distance between $\varphi_s(\omega)$ and $\varphi_s(\omega')$ is smaller than $K'd(\omega,\omega')$ and so that $\tau$ is Lipschitz with respect to the dynamical metric $d$.
The same holds true if we replace $(f,F)$ by $(D_1f,D_1F)$. Thus we have proved that
\[
\sup_{x\in\mathbb R^d}\sum_{m\in\mathbb Z}[\Vert F(x,\cdot,m)\Vert_{Lip}+\Vert D_1 F(x,\cdot,m)\Vert_{Lip}]<\infty\, .
\]
Therefore \cite[Theorem 3.4]{MP24discrete} applies both for $F$ and for $D_1F$ ensuring Assumption (b). Theorem~\ref{thmExempleprecis}
thus follows by Corollary~\ref{corbillard}, as announced.
\end{proof}
\subsection{Proof of Corollary \ref{corbillard}}\label{sec:proofcoro}
First, notice that the fact that Assumption~(c) of Theorem~\ref{thm:mainflow} holds true with 
$\overline\tau:=\int_{\Mp}\tau\, d\mup$ follows from  the ergodicity of $(\Mp,\mup,\tp)$ (via the Kac Lemma).\\

To identify the limit in Corollary \ref{corbillard} from the one coming from Theorem \ref{thm:mainflow} we will use the following known result (see \cite[Proposition 1.5.3]{MPphd} for example).
\begin{lem}\label{lemtemplocvar}
The local time $(\tilde L_t(.))_{t \geq 0}$ of $(\tilde B_t=B'_{t/\overline\tau})_{t\ge 0}$ is given by
$\tilde L_t(x)=\overline\tau L'_{t/\overline\tau}(x)$, 
where 
$(L'_t)_{t\geq 0}$ is the local time of the Brownian motion $\left(B_t')
\right)_{t\geq 0}$.
\end{lem}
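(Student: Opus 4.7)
The approach is essentially a one-line computation: apply a linear change of variables in the occupation-time density formula for Brownian local time. My plan has three quick steps.

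First, I fix the standard occupation-density convention: for a continuous process $W$, the local time at level $x$ satisfies
$$L^W_t(x) = \lim_{\varepsilon \to 0^+} \frac{1}{2\varepsilon}\int_0^t \mathbf{1}_{\{|W_s - x| < \varepsilon\}}\, ds,$$
with the limit taken in probability and admitting a jointly continuous version in $(t,x)$ when $W$ is Brownian (Trotter's theorem). This matches the convention implicitly used elsewhere in the paper, where quantities such as $\int_0^t h(w_s(x))\, d\tilde L_s(0)$ appearing in Theorem~\ref{thm:averagenonnull} acquire their natural interpretation.

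Second, I plug $\tilde B_s = B'_{s/\overline\tau}$ into this definition and carry out the linear change of variables $u = s/\overline\tau$, $ds = \overline\tau\, du$, which sends the interval $[0,t]$ to $[0, t/\overline\tau]$:
\begin{align*}
\tilde L_t(x) &= \lim_{\varepsilon \to 0^+} \frac{1}{2\varepsilon}\int_0^t \mathbf{1}_{\{|B'_{s/\overline\tau} - x| < \varepsilon\}}\, ds \\
&= \lim_{\varepsilon \to 0^+} \frac{\overline\tau}{2\varepsilon}\int_0^{t/\overline\tau} \mathbf{1}_{\{|B'_u - x| < \varepsilon\}}\, du = \overline\tau\, L'_{t/\overline\tau}(x).
\end{align*}
This gives the identity for each fixed $(t,x)$ almost surely, and joint continuity in $(t,x)$ of both sides upgrades it to an identity between the continuous versions of the two processes.

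There is no genuine mathematical obstacle here — the statement is a bookkeeping record of how the time rescaling $s \mapsto s/\overline\tau$ acts on local time, and the factor $\overline\tau$ emerges as the Jacobian of that substitution. The only point demanding a little care is consistency of the local-time convention: with the Tanaka (semimartingale) convention, the extra factor $d\langle B'\rangle$ versus $d\langle \tilde B\rangle$ combines with the time change in exactly the same way and produces the same prefactor $\overline\tau$, so the conclusion is insensitive to this choice.
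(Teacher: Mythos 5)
Your main computation --- write the occupation-density definition of local time and perform the substitution $u = s/\overline\tau$, picking up the Jacobian factor $\overline\tau$ --- is correct, and is the natural proof of this identity (the paper does not give its own argument; it refers to \cite[Proposition 1.5.3]{MPphd}). Your closing remark, however, is wrong: the result is \emph{not} insensitive to the local-time convention. With the Tanaka (semimartingale) convention the occupation formula reads $\int_0^t g(W_s)\, d\langle W\rangle_s = \int_{\mathbb R} g(x)\,L_t(x)\, dx$; for $\tilde B_t = B'_{t/\overline\tau}$ one has $\langle\tilde B\rangle_t = \langle B'\rangle_{t/\overline\tau}$, so $d\langle\tilde B\rangle_s$ contributes a factor $1/\overline\tau$ that exactly cancels the Jacobian $\overline\tau$ from $ds = \overline\tau\, du$, giving $\tilde L_t(x) = L'_{t/\overline\tau}(x)$ \emph{without} the prefactor. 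The factor $\overline\tau$ in the Lemma is therefore specifically a feature of the occupation-density convention (occupation time with respect to Lebesgue measure on time rather than with respect to quadratic variation), which is the convention compatible with the Darling--Kac-type laws such as~\eqref{LLLNbill1} and~\eqref{FLLNbill2} used in the paper. Your first two steps thus prove exactly what is needed; the third paragraph should be corrected or deleted.
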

\begin{proof}[Proof of Corollary \ref{corbillard}]
From Theorem \ref{thm:mainflow} applied to the assumptions of Corollary \ref{corbillard}, we already know that the process $\left(E_t^\epsilon(x,\cdot)\right)_{t\geq 0}$ converges strongly in distribution with respect to $\nu$ and
for the local topology of $\mathcal C([0,+\infty))$ to some process $(Y_t(x,\cdot))_{t\geq 0}$ given by
\begin{align}\label{eqthmpert}
Y_t(x,\omega)&= \int_0^{t/\overline\tau} \sqrt{a(W_s(x))}dB_{L_s'(0)}(\omega)\\
& +\int_0^t D\overline f(W_s(x)) \exp\left(\int_{s}^{t}D\overline f(W_u(x))du\right)\int_0^{s/\overline\tau} \sqrt{a(W_u(x))}dB_{L_u'(0)}(\omega)ds\, .
\end{align}
Recall that $w_t(x)$ has been defined in~\eqref{EDflowW2} and satisfies
$w_t=W_{\overline \tau t}$. 
The conclusion of Corollary \ref{corbillard} is thus a direct consequence of theorem \ref{thm:mainflow} with a slight reformulation of 
$(v_{\frac{t}{\bar \tau}})_{t\geq 0}$ mentioned there :
\begin{align*}
   (v_{\frac{t}{\bar \tau}})_{t\geq 0}&
   = \int_0^{t/\overline\tau} \sqrt{a(w_s(x))}dB_{L_s'(0)}\\
   &
   = \int_0^{t/\overline\tau} \sqrt{a(w_{\overline\tau t}(x))}dB_{L_s'(0)}\\
   &
   =\int_0^t\sqrt{a(W_s(x))}dB_{L'_{s/\overline \tau}(0)}\\
   &
   =\int_0^t\sqrt{\tilde a(W_s(x))}dB''_{\tilde L_s(0)},
\end{align*}
where we set $B''_t=\sqrt{\overline\tau}B_{t/\overline\tau }$, since
$\tilde a=\frac{a}{\overline \tau}$ and  $
\tilde L_t(0)
=
\overline \tau L'_{\frac{t}{\overline \tau}}(0)
$. Furthermore $B''$ has the same distribution as $B$. 
Thus we only need to check that the law of those two match relations \eqref{eqbrowniencontin} and \eqref{eqtilda}.\\
Both these convergences are consequence of \cite[Theorem 3.9, and section 14]{billingsley2} on the Slutzky lemma and on the random change of time. Indeed, using Assumption~(b) of Theorem~\ref{thm:mainflow}, the Birkhoff ergodic theorem, the definition of $B'$ and~\eqref{TCLinf}, we know that
\begin{align*}
\left(\epsilon n_{\frac t \epsilon}\right)_{t> 0} \xrightarrow[\epsilon \rightarrow 0]{\nu-a.e} \left(\frac t {\overline{\tau}}\right)_{t\geq 0} \textit{ and } \left(
\epsilon^{\frac 12}S_{\lfloor t/\epsilon\rfloor}\Phi\right)_{t\geq 0} \xrightarrow[\epsilon \rightarrow 0]{\mathcal{L}_{\nu},\Vert\cdot\Vert_\infty}\left(B_t'\right)_{t\geq 0}\, ,
\end{align*}
and
\begin{align}
\left(\epsilon n_{\frac t \epsilon}\right)_{t\geq 0} \xrightarrow[\epsilon \rightarrow 0]{\nu-a.e} \frac t {\overline{\tau}} \textit{ and } \left(Z^\epsilon_t\right)_{t\geq 0}  \xrightarrow[\epsilon \rightarrow 0]{\mathcal{L}_{\nu},\Vert\cdot\Vert_\infty}\left(\sqrt{a(x)}B_{L'_t(0)}\right)_{t\geq 0}\, ,
\end{align}
with $Z^\epsilon_t:=\epsilon^{1/4}\sum_{k=0}^{\lfloor t/\epsilon\rfloor-1}F(x,T^k(.))$.
Therefore, it follows from~\cite[Theorem 3.9, and Section 14]{billingsley2}  that
\[\left(\epsilon^{1/2}S_{n_\frac{t}{\epsilon}}=\epsilon^{1/2}S_{\lfloor (\epsilon n_\frac{t}{\epsilon})/\epsilon}\rfloor\right)_{t\geq 0}
 \xrightarrow[\epsilon \rightarrow 0]{\mathcal{L}_{\nu},\Vert\cdot\Vert_\infty}\left(\tilde B_t=B_{t/\overline\tau}'\right)_{t\geq 0},
 \]
which ends the proof of~\eqref{eqbrowniencontin}; and we also obtain that
\[\left(Z^\epsilon_{\epsilon n_{t/\epsilon}}\right)_{t\geq 0}  \xrightarrow[\epsilon \rightarrow 0]{\mathcal{L}_{\nu},\Vert\cdot\Vert_\infty}\left(\sqrt{a(x)}B_{L'_{t/\overline\tau}(0)}=\sqrt{\tilde a(x)}B''_{\tilde L_{t}(0)}\right)_{t\geq 0}.   
\]
Since
\[
    \left|\epsilon^{1/4}\int_0^{t/\epsilon}f(x,\f_s(.))\, ds
    - Z^\epsilon_{\epsilon n_{t/\epsilon}}\right|
   \le 2\epsilon^{1/4}\Vert \tau\Vert_\infty\Vert f\Vert_\infty\, ,
\]
which goes to 0 as $\epsilon\rightarrow 0$, we conclude~\eqref{eqtilda}.

\end{proof}

\section{Averaging for suspension flow: Proof of Theorem~\ref{thm:mainflow}}\label{secproof}
In all this section, we assume the assumptions of Theorem~\ref{thm:mainflow}. 
Let $S>0$. 
We start by recalling a useful fact (see \cite[Section 7]{billingsley2}). We write $S':=\frac{S}{\inf\tau
}$ and $\boldsymbol{\omega}_{[0;S']}$ for the continuity modulus on the time interval $[0;S']$. 
\begin{fact}\label{continuitymodulus}
If $(\gamma^\epsilon)_\epsilon$ is a family of random processes 
tight with respect to $\overline\mu$ in $\mathcal C([0;S'], \mathbb R^d)$ as $\epsilon\rightarrow 0$, then, for any $\eta_0>0$ and $\vartheta_0>0$, there exist $\eta>0$ and $\epsilon_1>0$ such that
\begin{equation*}
\forall \epsilon\in(0,\epsilon_1],\quad
\overline\mu(\boldsymbol{\omega}_{[0;S']}(\gamma^\epsilon,\eta)>\vartheta_0)<\eta_0\, .
\end{equation*}
\end{fact}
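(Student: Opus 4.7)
The plan is to deduce the statement from two classical ingredients: the definition of tightness in $\mathcal{C}([0;S'],\mathbb{R}^d)$, and the Arzelà-Ascoli characterization of its compact subsets. Since this is exactly the content of Prohorov's theorem combined with the equicontinuity criterion, the argument will be short and essentially bookkeeping, with no real obstacle beyond citing the correct references (Billingsley, Section 7).

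First, I would unpack the tightness hypothesis. By definition, tightness of $(\gamma^\epsilon)_\epsilon$ with respect to $\overline\mu$ in $\mathcal{C}([0;S'],\mathbb{R}^d)$ means that for the prescribed $\eta_0>0$, there exists a compact set $K\subset\mathcal{C}([0;S'],\mathbb{R}^d)$ and an $\epsilon_1>0$ such that
\[
\overline\mu(\gamma^\epsilon\in K)\geq 1-\eta_0,\qquad\forall\epsilon\in(0,\epsilon_1].
\]
(If the tightness is only asymptotic, one chooses $\epsilon_1$ small enough that the inequality holds eventually; if it holds uniformly in $\epsilon$, one may even take $\epsilon_1$ arbitrary.)

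Next, I would invoke the Arzelà-Ascoli theorem on the compact metric space $[0;S']$: the compact subset $K$ of $\mathcal{C}([0;S'],\mathbb{R}^d)$ is equicontinuous, which means precisely that
\[
\lim_{\eta\to 0^+}\sup_{f\in K}\boldsymbol{\omega}_{[0;S']}(f,\eta)=0.
\]
Therefore, I can select $\eta>0$ small enough so that
\[
\sup_{f\in K}\boldsymbol{\omega}_{[0;S']}(f,\eta)\leq\vartheta_0.
\]

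Finally, combining the two steps, the event $\{\boldsymbol{\omega}_{[0;S']}(\gamma^\epsilon,\eta)>\vartheta_0\}$ is contained in the complement of $\{\gamma^\epsilon\in K\}$, since any trajectory lying in $K$ automatically has its continuity modulus at scale $\eta$ bounded by $\vartheta_0$. Consequently,
\[
\overline\mu\bigl(\boldsymbol{\omega}_{[0;S']}(\gamma^\epsilon,\eta)>\vartheta_0\bigr)\leq \overline\mu(\gamma^\epsilon\notin K)\leq \eta_0,\qquad\forall\epsilon\in(0,\epsilon_1],
\]
which is the desired conclusion. The only subtlety worth flagging is that the quantifier structure of the statement ($\eta$ depending on $\eta_0$ and $\vartheta_0$, and $\epsilon_1$ depending only on $\eta_0$) is read off directly from the order in which the two selections are made: $\epsilon_1$ is fixed by the tightness step before $\eta$ is chosen via Arzelà-Ascoli.
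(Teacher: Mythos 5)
Your proposal is correct and is exactly the standard argument behind the reference the paper cites (Billingsley, Section~7): the paper does not prove Fact~\ref{continuitymodulus} but simply invokes it, and your combination of the definition of tightness with the Arzel\`a--Ascoli characterization of compactness in $\mathcal C([0;S'],\mathbb R^d)$ is the proof that lies behind that citation. The handling of the quantifiers (fixing $\epsilon_1$ from tightness first, then choosing $\eta$ from equicontinuity of the chosen compact set) is also the right order and matches what the statement requires.
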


A natural strategy to prove Theorem~\ref{thm:mainflow} is to approximate $X_t^\epsilon$ and $W_t$ using solutions $\tilde x_t^\epsilon$ and $\tilde w_t^\epsilon$ of the following differential equations defined on $\mathbb R^d\times M$
\begin{align*}
\frac{d\tilde x_t^\epsilon}{dt} (x,\omega)=F(\tilde x_t^\epsilon(x,\omega),T^{\lfloor \frac t \epsilon \rfloor}\omega)+\tau\circ T^{\lfloor \frac t \epsilon \rfloor}(\omega)\overline{f}(\tilde x_t^\epsilon(x,\omega))\, ,\quad \tilde x_0^\epsilon(x,\omega)=x\, ,
\end{align*}
and
\begin{align*}
\frac{d\tilde w_t^\epsilon}{dt} (x,\omega)=\tau\circ T^{\lfloor \frac t \epsilon \rfloor}(\omega)\overline{f}(\tilde w_t^\epsilon(x,\omega))\, ,\quad \tilde w_0^\epsilon(x,\omega)=x\, ,
\end{align*}
for $\omega \in M$. 
Notice for the last approximation that $\tilde w_t^\epsilon(x,\omega,n)$ with $(x,\omega,n)\in\mathbb R^d\times\overline M\otimes \mathbb Z$ does not depend on $n\in\mathbb Z$.
Thus $\tilde w_t^\epsilon$ admits a version on $(\Mp,\tp,\mup)$ which is a perturbed equation with averaged solution $w_{t}$
defined in~\eqref{EDflowW2}. 
Due to an argument by Kifer in~\cite{kifer} (see also \cite[prop. 3.2.2]{peneESAIMPS}), it follows from~\eqref{boundtau} 
and from the fact that $f$ and $\overline f$ are uniformly bounded and uniformly Lipschitz in the first variable $x\in\mathbb R^d$
that there exists a constant $C>0$ such that
for all $(\omega,s) \in \M$,
\begin{align}\label{eqxtild}
\sup_{x\in \mathbb{R}^d}\sup_{t\in [0,T]}\sup_{(\omega,l)\in M}\sup_{s\in[0,\tau(\omega))}|X_t^\epsilon(x,\omega,l,s)-\tilde x_{\epsilon n_{t/\epsilon}(\omega)}^\epsilon(x,\omega,l)|\leq C\epsilon\, ,
\end{align}
\begin{align}\label{eqwtilde}
\sup_{x\in \mathbb{R}^d}\sup_{t\in [0,T]}\sup_{\omega\in \overline M}\sup_{s\in[0,\tau(\omega))}|W_t(x)-\tilde w_{\epsilon n_{t/\epsilon}(\omega)}^\epsilon(x,\omega)|\leq C\epsilon\, ,
\end{align}
where $n_t(\omega)$ is the number of visits to $M\times 0$
of the orbit $(\varphi_s(\omega,0))_{s\in]0;t]}$, this corresponds to
\[
n_t(\omega):=\max\left\{n\ge 0\, :\, \sum_{k=0}^{n-1}\tau\circ
T^k(\omega)\le t\right\}\, .
\]
Thus, up to a change of time, we solely have to estimate the error made between $(\tilde x_t^\epsilon)_t$ and $(\tilde w_t^\epsilon)_t$:
\begin{align}
\tilde{e}_t^\epsilon:=&\tilde x_t^\epsilon -\tilde w_t^\epsilon = 
(\epsilon/\mathfrak a_\epsilon)v_t^\epsilon+c^\epsilon_t(x,\omega) \label{eqbirkper}\\ &+ 
\int_0^t F(\tilde x_s^\epsilon,T^{\lfloor s /\epsilon \rfloor})-F(\tilde w_s^\epsilon,T^{\lfloor s /\epsilon \rfloor})ds+\int_0^t\tau\circ T^{\lfloor s/\epsilon \rfloor} \left( \overline f(\tilde x_s^\epsilon)-\overline f (\tilde w_s^\epsilon)\right)ds\, ,\nonumber
\end{align}
with
\[
c^\epsilon_t(x,\omega):=\int_0^t \left(F(\tilde w_s^\epsilon,T^{\lfloor s /\epsilon \rfloor})-F(w_s,T^{\lfloor s /\epsilon \rfloor})\right)\, ds 
\]
and recalling that we have set 
\[
v_t^\epsilon(x,\omega)=
(\mathfrak a_\epsilon/\epsilon)\int_0^t F( w_s(x),T^{\lfloor s /\epsilon \rfloor}(\omega))\, ds\, ,
\]
in Assumption~(b) of Theorem~\ref{thm:mainflow}. 
%
Let us study the right hand side of \eqref{eqbirkper}.
By hypothesis, we already know that 
the family of processes $((v_t^\epsilon)_{t\geq 0})_{\epsilon >0}$ converges strongly in distribution with respect to $\mu$ and for the uniform metric on $[0,S']$. 
We will prove the next lemma.
\begin{lem}\label{controleGeps}
Let $x\in\mathbb R$. The family of random variables 
\begin{equation}
\left(
(\mathfrak a_\epsilon/\epsilon)\sup_{t\in[0;S']}\left\vert c_t(x,\cdot)\right\vert\right)_{\epsilon>0}
\end{equation}
converges to 0 in probability (with respect to $\overline \mu$), as $\epsilon\rightarrow 0$.
\end{lem}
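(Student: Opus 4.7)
The plan is to Taylor-expand $F$ around $w_s$, exploit Assumption~(d) to show that $\delta_s^\epsilon := \tilde w_s^\epsilon - w_s$ satisfies $\sup_{[0,S']}|\delta^\epsilon|=o_P(\sqrt{\epsilon/\mathfrak a_\epsilon})$, and then use integration by parts together with Assumption~(b) to control the resulting linear term. The Taylor expansion of $F$ in its first variable, combined with the uniform bound on $D_1^2F$ from Assumption~(a), yields
\[
F(\tilde w_s^\epsilon,\omega)-F(w_s,\omega)=D_1F(w_s,\omega)\,\delta_s^\epsilon+\rho_s^\epsilon(\omega),\qquad |\rho_s^\epsilon(\omega)|\le C|\delta_s^\epsilon|^2,
\]
uniformly in $\omega\in M$, which splits $c_t^\epsilon$ into a linear term and a quadratic remainder.

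The a priori bound on $\delta^\epsilon$ comes from the two ODEs defining $\tilde w^\epsilon$ and $w$. Writing
\[
\delta_s^\epsilon=\int_0^s(\tau\circ T^{\lfloor u/\epsilon\rfloor}-\bar\tau)\bar f(w_u)\,du+\int_0^s\tau\circ T^{\lfloor u/\epsilon\rfloor}\bigl[\bar f(\tilde w_u^\epsilon)-\bar f(w_u)\bigr]du,
\]
the substitution $u=\epsilon\tilde s$ identifies the first summand with $\sqrt{\epsilon/\mathfrak a_\epsilon}\,z_s^\epsilon$, which is uniformly $o_P(\sqrt{\epsilon/\mathfrak a_\epsilon})$ by Assumption~(d); the second summand is bounded by $\|\tau\|_\infty\,[\bar f]\int_0^s|\delta_u^\epsilon|\,du$, so Gr\"onwall's inequality delivers the claimed estimate. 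The quadratic Taylor remainder is then controlled by $\int_0^t|\rho_s^\epsilon|\,ds\le CS'(\sup_s|\delta_s^\epsilon|)^2=o_P(\epsilon/\mathfrak a_\epsilon)$ uniformly in $t\in[0,S']$.

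It remains to control the linear term $L_t^\epsilon:=\int_0^t D_1F(w_s,T^{\lfloor s/\epsilon\rfloor})\delta_s^\epsilon\,ds$. Setting $\dot V_s^\epsilon:=\int_0^s D_1F(w_u,T^{\lfloor u/\epsilon\rfloor})\,du=(\epsilon/\mathfrak a_\epsilon)\dot v_s^\epsilon$, the tightness of $\dot v^\epsilon$ in $\mathcal C([0,S'])$ provided by Assumption~(b) gives $\sup_s|\dot V_s^\epsilon|=O_P(\epsilon/\mathfrak a_\epsilon)$. Integration by parts produces
\[
L_t^\epsilon=\dot V_t^\epsilon\,\delta_t^\epsilon-\int_0^t\dot V_s^\epsilon\,\dot\delta_s^\epsilon\,ds,
\]
whose boundary term has size $O_P(\epsilon/\mathfrak a_\epsilon)\cdot o_P(\sqrt{\epsilon/\mathfrak a_\epsilon})=o_P(\epsilon/\mathfrak a_\epsilon)$ uniformly. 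Substituting $\dot\delta_s^\epsilon=(\tau\circ T^{\lfloor s/\epsilon\rfloor}-\bar\tau)\bar f(w_s)+\tau\circ T^{\lfloor s/\epsilon\rfloor}[\bar f(\tilde w_s^\epsilon)-\bar f(w_s)]$, the contribution of the second summand is bounded by $\sup_s|\dot V_s^\epsilon|\cdot\|\tau\|_\infty[\bar f]\cdot S'\cdot\sup_s|\delta_s^\epsilon|=o_P(\epsilon/\mathfrak a_\epsilon)$.

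The main obstacle is the remaining integral $I_t^\epsilon:=\int_0^t\dot V_s^\epsilon(\tau\circ T^{\lfloor s/\epsilon\rfloor}-\bar\tau)\bar f(w_s)\,ds$, which a~priori is only $O_P(\epsilon/\mathfrak a_\epsilon)$. The strategy is to discretise $[0,S']$ via a partition $0=u_0<\dots<u_N=S'$ of mesh $h>0$, and to replace $\dot V_s^\epsilon$ by $\dot V_{u_k}^\epsilon$ on each interval $[u_k,u_{k+1}]$. Fact~\ref{continuitymodulus} applied to $\dot v^\epsilon$ bounds the resulting error uniformly in $t$ by $(\epsilon/\mathfrak a_\epsilon)\cdot\boldsymbol{\omega}_{[0,S']}(\dot v^\epsilon,h)\cdot C$, which can be made arbitrarily small (after division by $\epsilon/\mathfrak a_\epsilon$, in probability) by choosing $h$ small and then $\epsilon$ small. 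The remaining Riemann-like sum equals $(\epsilon/\mathfrak a_\epsilon)^{3/2}\sum_k\dot v_{u_k}^\epsilon(z_{u_{k+1}}^\epsilon-z_{u_k}^\epsilon)$, plus a partial-interval contribution of the same form; since $z^\epsilon\to 0$ uniformly by Assumption~(d), each of the $N$ increments is $o_P(1)$, so for this \emph{fixed} $N$ the whole sum is $o_P((\epsilon/\mathfrak a_\epsilon)^{3/2})=o_P(\epsilon/\mathfrak a_\epsilon)$. A standard $\eta$--$h$--$\epsilon$ argument then concludes the uniform convergence of $(\mathfrak a_\epsilon/\epsilon)\sup_t|c_t^\epsilon|$ to $0$ in probability.
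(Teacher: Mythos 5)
Your proof is correct, and it reaches the conclusion by a genuinely different route than the paper's.

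Both proofs start the same way: Taylor-expand $F$ in the first variable to isolate a quadratic remainder (controlled by $\|D_1^2F\|_\infty\sup_s|\delta_s^\epsilon|^2$) and a linear term $L_t^\epsilon=\int_0^t D_1F(w_s,T^{\lfloor s/\epsilon\rfloor})\delta_s^\epsilon\,ds$, and both establish the a priori estimate $\sup_{[0,S']}|\delta^\epsilon|=o_P(\sqrt{\epsilon/\mathfrak a_\epsilon})$ by Gr\"onwall together with Assumption~(d); this is exactly Lemma~\ref{wtilde-w}. The difference lies in the treatment of the linear term. The paper decomposes $\delta_s^\epsilon=\delta_{k\eta}^\epsilon+\int_{k\eta}^s\dot\delta_u^\epsilon\,du$ on each $\eta$-block of a partition, applies Fubini inside each block, and reads off the single bound $\sup_t|G_t^\epsilon|\le K'_\epsilon\,\boldsymbol{\omega}_{[0,S']}(\dot v^\epsilon,\eta)$, so the whole linear term is absorbed into the continuity modulus of $\dot v^\epsilon$ at scale $\eta$ in one pass, using only the equicontinuity part of tightness. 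Your proof instead performs a continuous integration by parts $L_t^\epsilon=\dot V_t^\epsilon\delta_t^\epsilon-\int_0^t\dot V_s^\epsilon\dot\delta_s^\epsilon\,ds$, which makes visible exactly which piece is hard (the $(\tau\circ T^{\lfloor s/\epsilon\rfloor}-\bar\tau)\bar f(w_s)$ part of $\dot\delta^\epsilon$, yielding $I_t^\epsilon$), and you then run a secondary discretisation on $I_t^\epsilon$ that feeds both the continuity modulus of $\dot v^\epsilon$ \emph{and} the $z^\epsilon$ process from Assumption~(d) back in, together with the bound $\sup_s|\dot v_s^\epsilon|=O_P(1)$. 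Note that this last bound, and hence the full strength of $\mathcal C$-tightness including uniform boundedness, is essential for your boundary term $\dot V_t^\epsilon\delta_t^\epsilon$: the deterministic bound $\sup|\dot V^\epsilon|=O(1)$ would only give $o_P(\sqrt{\epsilon/\mathfrak a_\epsilon})$, which is the wrong order. Also observe that in your Riemann sum, since the prefactor $(\epsilon/\mathfrak a_\epsilon)^{3/2}$ already beats $\epsilon/\mathfrak a_\epsilon$, tightness of $z^\epsilon$ (i.e.\ $O_P(1)$) would suffice in place of convergence to $0$. Overall, the paper's Fubini rearrangement is a discrete avatar of your integration by parts and is more economical — it avoids the separate treatment of $I_t^\epsilon$ — while your version isolates more explicitly how Assumption~(d) enters twice (once in the Gr\"onwall estimate, once in the Riemann sum) and how both the modulus and the supremum of $\dot v^\epsilon$ are used.
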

To this end, we start by proving the following easy estimate.
\begin{lem}\label{wtilde-w}
Let $x\in\mathbb R$. 
The family of random variables
$
\sqrt{\mathfrak a_\epsilon/\epsilon}\sup_{t\in[0;S']}|\widetilde w_s^\epsilon(x,\cdot)-w_s(x)|
$ converges in probability to 0
(with respect to $\overline\mu$), as $\epsilon\rightarrow 0$.
\end{lem}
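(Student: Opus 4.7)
The plan is to write $\widetilde{w}_t^\epsilon(x,\omega) - w_t(x)$ as a solution of a perturbed linear equation whose driving noise is, up to the normalization $\sqrt{\mathfrak{a}_\epsilon/\epsilon}$, exactly the process $z_t^\epsilon$ appearing in Assumption~(d) of Theorem~\ref{thm:mainflow}. Subtracting the defining differential equations for $\widetilde{w}^\epsilon$ and $w$, and integrating from $0$ to $t$, I obtain
\begin{align*}
\widetilde{w}_t^\epsilon(x,\omega) - w_t(x) = \int_0^t \tau\circ T^{\lfloor s/\epsilon\rfloor}(\omega)\,\bigl(\overline f(\widetilde{w}_s^\epsilon) - \overline f(w_s)\bigr)\, ds + \int_0^t \bigl(\tau\circ T^{\lfloor s/\epsilon\rfloor}(\omega) - \overline\tau\bigr)\overline f(w_s(x))\, ds.
\end{align*}
The first step is to observe that, after the change of variable $s=\epsilon u$, the second integral multiplied by $\sqrt{\mathfrak{a}_\epsilon/\epsilon}$ is precisely $z_t^\epsilon$ as defined in Assumption~(d), so it converges in distribution with respect to $\overline\mu$ to $0$ for the uniform topology on $\mathcal C([0,S'])$, and in particular in probability.

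Next, I would control the first integral by the Lipschitz regularity of $\overline f$ and the bound $\sup\tau<\infty$ (see~\eqref{boundtau}). This yields
\[
\sqrt{\mathfrak{a}_\epsilon/\epsilon}\,\bigl|\widetilde{w}_t^\epsilon - w_t\bigr| \;\le\; \sup\tau\cdot[\overline f]\int_0^t \sqrt{\mathfrak{a}_\epsilon/\epsilon}\,|\widetilde{w}_s^\epsilon - w_s|\, ds + \sup_{s\in[0,S']}|z_s^\epsilon|,
\]
for every $t\in[0,S']$. Applying Grönwall's lemma then gives
\[
\sqrt{\mathfrak{a}_\epsilon/\epsilon}\sup_{t\in[0,S']}|\widetilde{w}_t^\epsilon(x,\cdot) - w_t(x)| \;\le\; e^{S'\sup\tau\cdot[\overline f]}\sup_{s\in[0,S']}|z_s^\epsilon|,
\]
and convergence to $0$ in probability of the right-hand side, granted by Assumption~(d), closes the argument.

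The only delicate point is simply matching the time rescaling between the natural parametrization of $\widetilde w^\epsilon$ (indexed by $s\in[0,S']$ with $T^{\lfloor s/\epsilon\rfloor}$) and the one used to define $z^\epsilon$ in Theorem~\ref{thm:mainflow} (indexed by $u=s/\epsilon$ with $T^{\lfloor u\rfloor}$); once this is correctly tracked, the normalization $\sqrt{\mathfrak{a}_\epsilon/\epsilon}$ appears naturally and no further probabilistic estimate is needed. Everything else is pathwise Grönwall control.
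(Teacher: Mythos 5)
Your proof is correct and follows essentially the same route as the paper: subtract the two ODEs, isolate the noise term $\int_0^t(\tau\circ T^{\lfloor s/\epsilon\rfloor}-\overline\tau)\overline f(w_s(x))\,ds$, identify it (after the change of variable $s=\epsilon u$) as $\sqrt{\epsilon/\mathfrak a_\epsilon}\,z_t^\epsilon$, and close with Grönwall and Assumption~(d). The bookkeeping of the normalization factor is exactly as you describe, so nothing is missing.
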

\begin{proof}
It follows from the differential equations satisfied respectively by
$\widetilde w^\epsilon$ and $w$ that
\begin{align*}
|\widetilde w_t^\epsilon(x,\cdot)-w_t(x)|
&\le \left|\int_0^t(\tau\circ T^{\lfloor s/\epsilon\rfloor}-\overline\tau)\overline f(w_s(x))\, ds\right|\\
&\quad\quad +\int_0^t\max\tau \Vert D_1\overline f\Vert_\infty |\widetilde w_s^\epsilon(x,\cdot)-w_s(x)|\, ds\, .
\end{align*}
It then follows from the Gr\"onwall Lemma that there exists $C_0>0$ such that, for all $(x,\omega)\in \mathbb R\times \overline M$,
\begin{align}
\sup_{t\in[0;S']}|\widetilde w_t^\epsilon(x,\omega)-w_t(x)|
&\le C_0 \sup_{t\in[0;S']}\left|\int_0^t(\tau\circ T^{\lfloor s/\epsilon\rfloor}-\overline\tau)\overline f(w_s(x))\, ds\right|\\
&\le C_0 \sqrt{
\epsilon/\mathfrak a_\epsilon}\sup_{t\in[0;S']}\left|z_t^\epsilon
\right|\, ,
\end{align}
which, combined with Assumption~(d) of Theorem~\ref{thm:mainflow}, proves the lemma, once multiplied by $\sqrt{\mathfrak a_\epsilon/\epsilon}$, as $\epsilon\rightarrow 0$.
\end{proof}

\begin{proof}[Proof of Lemma~\ref{controleGeps}]
Let $t\in[0;S']$. 
Since $F$ is twice differentiable in its first variable $x\in\mathbb R$ with uniformly bounded second order derivative, it follows that
\begin{align}\label{eqamajorer}
\left|
c_t^\epsilon(x,\omega)-\int_0^tD_1F(w_s,T^{\lfloor s/\epsilon \rfloor})(\tilde w^\epsilon_s-w_s)ds\right|
\le \frac {t\Vert D_1^2F\Vert_\infty}2 \sup_{s\in[0;S']}\left|\tilde w^\epsilon_s-w_s\right|^2  \, ,
\end{align} 
and, it follows from Lemma~\ref{wtilde-w}
that this quantity, taken at $(x,\cdot)$ and multiplied by 
$\mathfrak a_\epsilon/\epsilon$, vanishes in probability as $\epsilon\rightarrow 0$.
Setting
$$
G_t^\epsilon(x,\omega):=
(\mathfrak a_\epsilon/\epsilon)\int_0^tD_1F(w_s(x),T^{\lfloor s/\epsilon \rfloor}(\omega))(\tilde w^\epsilon_s(x,\omega)-w_s(x))ds
$$
it remains to prove that 
\begin{equation}\label{cvgceGproba}
\sup_{[0;S']}|G^\epsilon(x,\cdot)|\rightarrow 0\mbox{  in probability as } \epsilon\rightarrow 0\, .
\end{equation}
Let $\eta>0$. Observe that
\begin{align}
G^\epsilon
&=
(\mathfrak a_\epsilon/\epsilon)\int_0^tD_1F(w_s
,T^{\lfloor s/\epsilon \rfloor})(\tilde w^\epsilon_s-w_s)ds \nonumber\\
&=
(\mathfrak a_\epsilon/\epsilon)\int_0^t\sum_{k=0}^{\lfloor t/\eta\rfloor}D_1F(w_s,T^{\lfloor s/\epsilon \rfloor})\left(\tilde w^\epsilon_{k\eta}- w_{k\eta}\right.\\
&\left. 
\quad +\int_{k\eta}^s
\left(
\tau\circ T^{\lfloor u/\epsilon\rfloor}
\overline f(\tilde w_u^\epsilon)
-\overline \tau
\overline f(w_u)
\right)
du\right)1_{
 k\eta \leq s < (k+1)\eta} \, ds\nonumber\\
&=
(\mathfrak a_\epsilon/\epsilon)\sum_{k=0}^{\lfloor t/\eta
\rfloor}\left( \int_{
k\eta}^{\min(t,
(k+1)\eta)}D_1F(w_s
,T^{\lfloor s/\epsilon \rfloor})ds(\tilde w^\epsilon_{
k\eta}-w_{
k\eta})\right. \label{fubini}\\
&\left.+\int_{
k\eta}^{\min(t,
(k+1)\eta)}\left(\int_{u}^{\min(t,
(k+1)\eta)} D_1F(w_s
,T^{\lfloor s/\epsilon \rfloor})\, ds\right)\left(\tau\circ T^{\lfloor u/\epsilon\rfloor}
\overline f(\tilde w_u^\epsilon)-
\overline \tau
\overline f(w_u
)\right)
du\right),\nonumber
\end{align}
where we used Fubini relation at equation \eqref{fubini}. Recalling that, for any $\epsilon >0$, the process $(\dot{v}_t^\epsilon)_{t\geq 0}$ is given by $\dot{v}_t^\epsilon=
(\mathfrak a_\epsilon/\epsilon)
\int_0^tD_1F(w_s,T^{\lfloor s/\epsilon \rfloor})ds$, it follows that
\begin{align}
\sup_{t\in[0;S']}&|G^\epsilon_t(x,\cdot)|\leq 
K'_\epsilon(x,\cdot)
\boldsymbol{\omega}_{[0,S']}(\dot v^\epsilon(x,\cdot),
\eta)\, ,\label{eqintvepsilon}
\end{align}
with $K'_\epsilon:=S'\left(
\eta^{-1}
\sup_{[0;S']}|\tilde w^\epsilon- w|+2\|\tau\|_\infty\|\overline f\|_\infty
\right)$. 
To conclude, given $\eta_0>0$, we use
Assumption~(b) of Theorem~\ref{thm:mainflow} and Fact~\ref{continuitymodulus} to fix $\eta>0$ and $\epsilon_1>0$ so that 
\begin{equation}\label{tight}
    \forall \epsilon\in(0;\epsilon_1],\quad \overline\mu\left(S'(
1+2\Vert\tau\Vert_\infty\Vert\overline f\Vert_\infty)\boldsymbol{\omega}_{[0,S']}(\dot v^\epsilon(x,\cdot),
\eta)>\frac{\eta_0}2\right)<\frac{\eta_0}2\, 
\end{equation}
and then we use Lemma~\ref{wtilde-w}
which ensures that
there exists $\epsilon_0\in(0,\epsilon_1)$ such that
\[\forall \epsilon\in(0,\epsilon_0],\quad
\overline\mu\left(\eta^{-1}\sup_{[0,S']}|\widetilde w^\epsilon-w|>1\right)<\frac{\eta_0}2\, ,
\]
which combined with~\eqref{tight}
ensures that 
\[
\forall \epsilon\in(0,\epsilon_0],\quad 
\overline\mu\left(\sup_{[0;S']}|G^\epsilon(x,\cdot)|>\eta_0\right)<\eta_0\, ,
\]
ending the proof of~\eqref{cvgceGproba} and so of Lemma~\ref{controleGeps}.
\end{proof}

Thus, it follows from Lemma~\ref{controleGeps} that Equation~\eqref{eqbirkper} becomes
\[
(\mathfrak a_\epsilon/\epsilon)
\tilde e_t^\epsilon= v_t^\epsilon+ o(1)
+ 
\int_0^t F(\tilde x_s^\epsilon,T^{\lfloor s /\epsilon \rfloor})-F(\tilde w_s^\epsilon,T^{\lfloor s /\epsilon \rfloor})ds+\int_0^t\tau\circ T^{\lfloor s/\epsilon \rfloor} \left( \overline f(\tilde x_s^\epsilon)-\overline f (\tilde w_s^\epsilon)\right)ds\, ,
\]
for the convergence in probability (with respect to $\overline\mu$) and for the uniform norm on $\mathcal C([0;S'])$.
We will see in Lemma~\ref{lemmajoreteps} that $
(\mathfrak a_\epsilon/\epsilon)
\tilde e_t^\epsilon$ is well approximated by the process $y_t^\epsilon$ 
studied in the following result.
\begin{prop}\label{propconvyeps}
For every $x\in\mathbb R^d$, 
the family of processes $(\tilde y^\epsilon(x,\cdot))_{\epsilon>0}$ defined by 
$$
\tilde y_t^\epsilon(x,\cdot)= v_t^\epsilon(x,\cdot) +\int_0^t \tau \circ T^{\lfloor s/\epsilon \rfloor} D \overline f (w_s(x)) \tilde y_s^\epsilon(x,\cdot)\,  ds
$$
 converges 
 in distribution on 
 with respect to 
 $\mup$
 and for the uniform metric on the space $\mathcal C([0,S'],\mathbb{R}^d)$ to the process $(y_t(x,\cdot))_{t \geq 0}$
\begin{align*}
y_t(x,\cdot)=v_t(x,\cdot)+\int_0^t \overline \tau D \overline f (w_s(x)) _s y_s(x,\cdot)\, ds\, .
\end{align*}
\end{prop}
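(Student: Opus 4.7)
The approach exploits the linearity of the integral equation defining $\tilde y^\epsilon$. Set $A^\epsilon_s := \tau\circ T^{\lfloor s/\epsilon\rfloor} D\overline f(w_s(x))$ and $A_s := \overline\tau D\overline f(w_s(x))$; both are uniformly bounded by some constant $M$. The plan is to prove the stated convergence in four steps: tightness of $(\tilde y^\epsilon)$, an averaging estimate, identification of subsequential limits, and uniqueness.

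\textbf{Tightness.} Gronwall's lemma applied to the defining equation gives $\|\tilde y^\epsilon\|_{\infty,[0,S']}\le e^{MS'}\|v^\epsilon\|_{\infty,[0,S']}$, so $(\tilde y^\epsilon)$ is bounded in probability by Assumption~(b). Since $\tilde y^\epsilon - v^\epsilon = \int_0^\cdot A^\epsilon_s \tilde y^\epsilon_s\, ds$ is Lipschitz with uniformly bounded constant, $\boldsymbol{\omega}_{[0,S']}(\tilde y^\epsilon,\eta) \le \boldsymbol{\omega}_{[0,S']}(v^\epsilon,\eta) + M\sup|\tilde y^\epsilon|\,\eta$, and Fact~\ref{continuitymodulus} yields tightness in $\mathcal C([0,S'])$.

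\textbf{Averaging.} Set $Z^\epsilon_t := \int_0^t (A^\epsilon_s-A_s)\, ds$. The change of variables $u=s/\epsilon$ identifies $Z^\epsilon$ with the process $\dot z^\epsilon$ of Assumption~(d), upon replacing $\overline f$ by $D\overline f$. Finite-distributional convergence $\dot z^\epsilon \to 0$, combined with the uniform Lipschitz bound $|Z^\epsilon_t - Z^\epsilon_u|\le 2\|\tau\|_\infty\|D\overline f\|_\infty |t-u|$ (hence equicontinuity), upgrades to uniform convergence $\|Z^\epsilon\|_{\infty,[0,S']}\to 0$ in probability.

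\textbf{Identification.} Extract a subsequence along which $(v^\epsilon,\tilde y^\epsilon, Z^\epsilon)$ converges jointly in distribution, and realise it as almost sure uniform convergence to $(v,\tilde y,0)$ via the Skorohod representation theorem. Decompose
\[
\int_0^t A^\epsilon_s \tilde y^\epsilon_s\, ds = \int_0^t A_s \tilde y^\epsilon_s\, ds + \int_0^t (A^\epsilon_s-A_s)\tilde y^\epsilon_s\, ds.
\]
The first integral converges a.s.\ uniformly to $\int_0^t A_s \tilde y_s\, ds$ by continuity of $s\mapsto A_s$. For the second, split $\tilde y^\epsilon = v^\epsilon + L^\epsilon$ with $L^\epsilon := \int_0^\cdot A^\epsilon_s \tilde y^\epsilon_s\, ds$ uniformly Lipschitz; for each choice $g^\epsilon \in\{v^\epsilon, L^\epsilon\}$, approximate the factor by its values on a mesh of width $\eta$ and apply Abel summation to $\sum_k g^\epsilon_{k\eta}(Z^\epsilon_{(k+1)\eta\wedge t} - Z^\epsilon_{k\eta})$ to obtain a bound of the form
\[
2MS'\,\boldsymbol{\omega}_{[0,S']}(g^\epsilon,\eta) + 2\|g^\epsilon\|_\infty\|Z^\epsilon\|_\infty + \lceil S'/\eta\rceil\, \boldsymbol{\omega}_{[0,S']}(g^\epsilon,\eta)\, \|Z^\epsilon\|_\infty.
\]
Letting $\epsilon\to 0$ first (with $\eta$ fixed, using $\|Z^\epsilon\|_\infty\to 0$) and then $\eta\to 0$ (using the a.s.\ continuity of $v$ for $g^\epsilon = v^\epsilon$, and the uniform Lipschitz bound on $L^\epsilon$ for $g^\epsilon = L^\epsilon$) forces this term to vanish. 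Any subsequential limit $\tilde y$ thus satisfies the integral equation $\tilde y_t = v_t + \int_0^t A_s \tilde y_s\, ds$; Gronwall uniqueness for this linear Volterra equation identifies $\tilde y$ with $y$, and the full family converges in distribution to $y$.

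The main obstacle is the identification step: the rapidly oscillating coefficient $A^\epsilon_s$ multiplies a process $\tilde y^\epsilon_s$ whose non-smooth part $v^\epsilon_s$ need not be uniformly Lipschitz. Isolating a Lipschitz remainder via the splitting $\tilde y^\epsilon = v^\epsilon + L^\epsilon$, and carefully ordering the two limits ($\epsilon\to 0$ before $\eta\to 0$) so that the factor $\lceil S'/\eta\rceil$ is absorbed by $\|Z^\epsilon\|_\infty$, is what makes the discretisation argument succeed.
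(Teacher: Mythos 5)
Your proof is correct, but it is organised differently from the paper's. The paper works with the resolvent operators $\mathcal F_\epsilon, \mathcal F_0$ of the two linear Volterra equations, writes $\tilde y^\epsilon=\mathcal F_\epsilon(v^\epsilon)$ and $y=\mathcal F_0(v)$, observes that $\mathcal F_0$ is Lipschitz on $\mathcal C([0,S'])$ so $\mathcal F_0(v^\epsilon)\Rightarrow\mathcal F_0(v)$ by the continuous mapping theorem, and then reduces (via Gr\"onwall) the remaining difference $\mathcal F_\epsilon(v^\epsilon)-\mathcal F_0(v^\epsilon)$ to the vanishing of the single quantity $\sup_t\bigl|\int_0^t(\tau\circ T^{\lfloor s/\epsilon\rfloor}-\overline\tau)D\overline f(w_s)(\mathcal F_0(v^\epsilon))_s\,ds\bigr|$, which it handles by the same mesh discretisation you use. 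You instead establish tightness of $\tilde y^\epsilon$, pass to a Skorohod subsequence, identify the subsequential limit as the solution of the averaged Volterra equation, and invoke Gr\"onwall uniqueness. Two differences are worth noting. First, you upgrade the finite-dimensional convergence $\dot z^\epsilon\to 0$ of Assumption~(d) to uniform convergence $\lVert Z^\epsilon\rVert_\infty\to 0$ via the uniform Lipschitz bound on $Z^\epsilon=\dot z^\epsilon$; the paper does not need this upgrade because it keeps the sum $\sum_k(\ldots)$ and sends only increments to zero, but your strengthening is valid and makes the bookkeeping cleaner. Second, the paper's choice to factor through $\mathcal F_0(v^\epsilon)$ is what lets it avoid your splitting $\tilde y^\epsilon=v^\epsilon+L^\epsilon$: the process $\mathcal F_0(v^\epsilon)$ inherits a controlled modulus of continuity directly from $\mathcal F_0$ being Lipschitz, so a single discretisation suffices, whereas you must treat the non-Lipschitz part $v^\epsilon$ and the Lipschitz remainder $L^\epsilon$ separately via Abel summation. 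Both routes are sound; the paper's is shorter because it never extracts subsequences, while yours is a more generic tightness-plus-identification argument that would adapt more readily to nonlinear Volterra equations.
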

\begin{proof}
Fix $\epsilon>0$ and denote $\mathcal{F}_\epsilon: \mathcal C ([0,S'],\mathbb{R}^d)\rightarrow \mathcal C ([0,S'],\mathbb{R}^d)$, the application such that $y:=\mathcal{F}_\epsilon(z)$ where $(y_t)_{t\geq 0}$ is defined, for $z\in \mathcal C ([0,S'],\mathbb{R}^d)$, as the solution of the following variational equation 
\begin{align*}
\left(\mathcal{F}_\epsilon(z)\right)_t=z_t+\int_0^t \tau \circ T^{\lfloor s/\epsilon \rfloor} D\overline{f}(w_s(x))\left(\mathcal {F}_\epsilon(z)\right)_s\, ds\, ,
\end{align*}
and analogously, we define $\mathcal{F}_0: \mathcal C ([0,S'],\mathbb{R}^d)\rightarrow \mathcal C ([0,S'],\mathbb{R}^d)$ with the following variational equation
\begin{align*}
\left(\mathcal{F}_0(z)\right)_t=z_t+\int_0^t \overline \tau D\overline{f}(w_s(x))\left(\mathcal {F}_0(z)\right)_sds.
\end{align*}
Observe that
\[
(\mathcal F_\epsilon(z))_t=z_t+\int_0^tK_\epsilon(s,t)z_s\, ds\quad\mbox{and}\quad
(\mathcal F_0(z))_t=z_t+\int_0^tK_0(s,t)z_s\, ds\, ,
\]
with 
\[K_\epsilon(s,t):=\tau\circ T^{\lfloor \frac s\epsilon\rfloor}D\overline f(w_s)e^{\int_s^t\tau\circ T^{\lfloor \frac u\epsilon\rfloor}D\overline f(w_u)\, du}\]
and $K_0(s,t):=\overline\tau D\overline f(w_s)e^{\int_s^t\overline\tau D\overline f(w_u)\, du}$.

In particular $\mathcal F_\epsilon$ and $\mathcal F_0$ are uniformly Lipschitz with respect to the uniform metric. 
This, combined with 
the convergence of $v^\epsilon$ to $v$,
ensures that $\mathcal F_0(v^\epsilon)$ converges strongly in distribution
with respect to $\mu$ to $\mathcal F_0(v)$.
It remains to prove that $\mathcal F_\epsilon(v^\epsilon)-\mathcal F_0(v^\epsilon)$ converges in probability to 0
as $\epsilon\rightarrow 0$.
To this end, we write
\begin{align*}
&\left|(\mathcal F_\epsilon(v^\epsilon(x,\cdot)))_t-(\mathcal F_0(v^\epsilon(x,\cdot)))_t\right|\\
&=\left| \int_0^tD\overline{f}(w_s(x))\left(\tau\circ T^{\lfloor s/\epsilon\rfloor}(\mathcal F_\epsilon(v^\epsilon(x,\cdot)))_s-\overline\tau
(\mathcal F_0(v^\epsilon(x,\cdot)))_s\right)\, ds
\right|\\
&\le \left| \int_0^t\tau\circ T^{\lfloor s/\epsilon\rfloor} D\overline{f}(w_s(x))\left((\mathcal F_\epsilon(v^\epsilon(x,\cdot)))_s-
(\mathcal F_0(v^\epsilon(x,\cdot)))_s\right)\, ds\right|\\
&+\left| \int_0^t(\tau\circ T^{\lfloor s/\epsilon\rfloor}-\bar\tau)D\overline{f}(w_s(x))
(\mathcal F_0(v^\epsilon(x,\cdot)))_s\, ds
\right|\\
&\le A_t^\epsilon(x,\cdot)+\int_0^t\Vert\tau\Vert_\infty
\Vert D\overline f\Vert_\infty \left|(\mathcal F_\epsilon(v^\epsilon(x,\cdot)))_s-
(\mathcal F_0(v^\epsilon(x,\cdot)))_s\right|\, ds\, ,
\end{align*}
with 
\[
A_t^\epsilon(x,\cdot):=\left| \int_0^t (\tau\circ T^{\lfloor s/\epsilon\rfloor}-\bar\tau)D\overline{f}(w_s(x))
(\mathcal F_0(v^\epsilon(x,\cdot)))_s\, ds
\right|\, .
\]
It then follows from the Gr\"onwall lemma that
\begin{equation}\label{Feps-F0}
\sup_{t\in[0,S']}\left|(\mathcal F_\epsilon(v^\epsilon(x,\cdot)))_t-(\mathcal F_0(v^\epsilon(x,\cdot)))_t\right|\le
\sup_{u\in[0,S']}|A_u^\epsilon(x)|e^{S'\Vert\tau\Vert_\infty\Vert D\overline f\Vert_\infty}\, .
\end{equation}
Since $\mathcal F_0$ is Lipschitz, noting $L_0$ for its Lipschitz constant 
\[
\boldsymbol{\omega}(\mathcal F_0(v^\epsilon(x,\cdot))\le L_0\, \boldsymbol{\omega}(v^\epsilon(x,\cdot))\, .
\]
Furthermore
\begin{align*}
\sup_{t\in[0;S']}|A_t^\epsilon|&=\sup_{k=0}^{S'/\eta}|A_{k\eta}^\epsilon| + 2\eta \Vert D\overline f\Vert_\infty \Vert\tau\Vert_\infty \sup_{[0;S']}|\mathcal F_0(v^\epsilon)|\\
&\leq \sum_{k=0}^{S'/\eta}\left|\int_{k\eta}^{(k+1)\eta}(\tau\circ T^{\lfloor s/\epsilon\rfloor}-\overline\tau)
D\overline f(w_s) (\mathcal F_0(v^\epsilon))_{k\eta}\, ds\right|\\
&+ 2 \Vert D\overline f\Vert_\infty \Vert\tau\Vert_\infty \left(\eta\sup_{t\in[0;S']}|(\mathcal F_0(v^\epsilon))_t|+ S' \boldsymbol{\omega}_{[0;S']}(\mathcal F_0(v^\epsilon(x,\cdot)),\eta)\right)\, .
\end{align*}
Let $x\in\mathbb R^d$ and $\eta_0>0$. Assumption~(b) of Theorem~\ref{thm:mainflow} ensures the  convergence in distribution of $(v^\epsilon(x,\cdot))_\epsilon$ and so of $(\mathcal F_0(v^\epsilon(x,\cdot)))_\epsilon$, and from 
Fact~\ref{continuitymodulus} that
we can fix $\eta>0$ and $\epsilon_1>0$ so that, for every $\epsilon\in(0,\epsilon_1)$,
\[
\overline\mu \left(
2 \Vert D\overline f\Vert_\infty \Vert\tau\Vert_\infty \left(\eta\sup_{[0;S']}|\mathcal F_0(v^\epsilon(x,\cdot))|+ S'\boldsymbol{\omega}_{[0;S']}(\mathcal F_0(v^\epsilon(x,\cdot)),\eta)\right)>\frac{\eta_0}2
\right)<\frac{\eta_0}2\, .
\]
Then, we fix $\epsilon_0\in(0,\epsilon_1)$ small enough so that,
for all $\epsilon\in(0;\epsilon_0]$,
\[
\overline\mu\left(
\sum_{k=0}^{S'/\eta}\left|\int_{k\eta}^{(k+1)\eta} (\tau\circ T^{\lfloor s/\epsilon\rfloor}-\overline\tau)
D\overline f(w_s(x)) (\mathcal F_0(v^\epsilon(x,\cdot)))_{k\eta}\, ds\right|>\frac{\eta_0}2
\right)<\frac{\eta_0}2\, ,
\]
using the convergence in distribution to 0, with respect to $\overline\mu$ as $\epsilon\rightarrow 0$, of $(\dot z_{k\eta}^\epsilon)_{\epsilon>0}$, for all $k\in\mathbb N$ (this comes from Assumption~(d) of Theorem~\ref{thm:mainflow}).
Therefore 
\[
\forall \epsilon\in(0;\epsilon_0],\quad 
\overline\mu\left(\sup_{t\in[0;S']}|A_t^\epsilon(x,\cdot)|>\eta_0 \right)<\eta_0\, ,
\]
and so 
$\sup_{t\in[0;S']}|A_t^\epsilon(x,\cdot)|$
converges in probability (with respect to $\overline\mu$) to 0. This, combined with~\eqref{Feps-F0} implies that
$\mathcal F_\epsilon(v^\epsilon(x,\cdot))-\mathcal F_0(v^\epsilon(x,\cdot))$ converges in probability to 0
as $\epsilon\rightarrow 0$, ending the proof of the proposition, since $\mathcal F_0(v^\epsilon(x,\cdot))$
converges in distribution with respect to $\overline\mu$ to $\mathcal F_0(v(x,\cdot))$.
 \end{proof}
\begin{lem}\label{lemmajoreteps}
For every $x\in\mathbb R^d$, 
the sequence $(
\sup_{t \in [0,S']}|
(\mathfrak a_\epsilon/\epsilon)
\tilde e_t^\epsilon(x,.)-\tilde y_t^\epsilon(x,.)|)_{\varepsilon>0}
$
converges in probability (with respect to $\overline\mu$) to 0,
as $\varepsilon\rightarrow 0$.
\end{lem}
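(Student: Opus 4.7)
My plan is to analyze the normalized error $u^\epsilon_t := (\mathfrak a_\epsilon/\epsilon)\tilde e^\epsilon_t$ and show that it asymptotically satisfies the same integral equation as $\tilde y^\epsilon_t$, from which Gronwall will conclude. First, I would multiply~\eqref{eqbirkper} by $\mathfrak a_\epsilon/\epsilon$ and use the uniform Lipschitz bounds on $F(\cdot,\omega)$ and $\bar f$ from Assumption~(a); Gronwall then produces
\[
\sup_{t\in[0,S']}|u^\epsilon_t|\le\bigl(\sup|v^\epsilon|+(\mathfrak a_\epsilon/\epsilon)\sup|c^\epsilon|\bigr)\,\exp\bigl(S'(\|D_1F\|_\infty+\|\tau\|_\infty\|D\bar f\|_\infty)\bigr)\, ,
\]
which is tight by Assumption~(b) and Lemma~\ref{controleGeps}. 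Since $u^\epsilon-v^\epsilon$ is the sum of $(\mathfrak a_\epsilon/\epsilon)c^\epsilon$ (uniformly $o_P(1)$) and of two time-integrals whose integrands are bounded by a constant times $\sup|u^\epsilon|$, one has $\boldsymbol\omega_{[0,S']}(u^\epsilon,\eta)\le \boldsymbol\omega_{[0,S']}(v^\epsilon,\eta)+2(\mathfrak a_\epsilon/\epsilon)\sup|c^\epsilon|+C\eta\sup|u^\epsilon|$, so $(u^\epsilon)_\epsilon$ is tight in $\mathcal C([0,S'],\mathbb R^d)$.

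Next, I would Taylor-expand in the first variable, writing $F(\tilde x^\epsilon_s)-F(\tilde w^\epsilon_s)=D_1F(\tilde w^\epsilon_s,T^{\lfloor s/\epsilon\rfloor})\tilde e^\epsilon_s+R_1$ with $|R_1|\le\tfrac12\|D_1^2F\|_\infty|\tilde e^\epsilon_s|^2$, and similarly for $\bar f$ using $\|D^2\bar f\|_\infty$. Once multiplied by $\mathfrak a_\epsilon/\epsilon$, these quadratic remainders contribute a term of order $(\epsilon/\mathfrak a_\epsilon)\sup|u^\epsilon|^2=o_P(1)$. I would then replace $D_1F(\tilde w^\epsilon_s,\cdot)$ by $D_1F(w_s,\cdot)$ and $D\bar f(\tilde w^\epsilon_s)$ by $D\bar f(w_s)$; the discrepancies are controlled by $C\sup|\tilde w^\epsilon-w|\sup|u^\epsilon|=o_P(1)$ thanks to Lemma~\ref{wtilde-w} and the tightness of $u^\epsilon$. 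The equation for $u^\epsilon_t$ then reduces, uniformly on $[0,S']$, to
\[
u^\epsilon_t=v^\epsilon_t+o_P(1)+\int_0^t D_1F(w_s,T^{\lfloor s/\epsilon\rfloor})u^\epsilon_s\, ds+\int_0^t\tau\circ T^{\lfloor s/\epsilon\rfloor}D\bar f(w_s)u^\epsilon_s\, ds\, .
\]

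The hard part will be to prove that the first integral above is $o_P(1)$ uniformly on $[0,S']$. Here I would fix a partition step $\eta>0$, approximate $u^\epsilon_s$ by $u^\epsilon_{k\eta}$ on each subinterval $[k\eta,(k+1)\eta)$ at a cost of $\|D_1F\|_\infty S'\boldsymbol\omega_{[0,S']}(u^\epsilon,\eta)$, and use the very definition of $\dot v^\epsilon$ from Assumption~(b) to rewrite
\[
\int_{k\eta}^{(k+1)\eta}D_1F(w_s,T^{\lfloor s/\epsilon\rfloor})\, ds=(\epsilon/\mathfrak a_\epsilon)\bigl[\dot v^\epsilon_{(k+1)\eta}-\dot v^\epsilon_{k\eta}\bigr]\, .
\]
The discretized sum is then dominated by $(\epsilon/\mathfrak a_\epsilon)\sup|u^\epsilon|\cdot 2\lceil S'/\eta\rceil\sup|\dot v^\epsilon|$, which tends to $0$ in probability as $\epsilon\to 0$ with $\eta$ fixed (since $\epsilon/\mathfrak a_\epsilon\to 0$ while $\sup|u^\epsilon|$ and $\sup|\dot v^\epsilon|$ are tight by Assumption~(b)); sending $\eta\to 0$ afterwards kills the approximation error through the tightness of the modulus of continuity of $u^\epsilon$ established above. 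Subtracting the resulting equation for $u^\epsilon_t$ from the defining equation of $\tilde y^\epsilon_t$ leaves $u^\epsilon_t-\tilde y^\epsilon_t=o_P(1)+\int_0^t\tau\circ T^{\lfloor s/\epsilon\rfloor}D\bar f(w_s)(u^\epsilon_s-\tilde y^\epsilon_s)\, ds$, and a final application of Gronwall yields $\sup_{[0,S']}|u^\epsilon-\tilde y^\epsilon|\le o_P(1)\cdot\exp(\|\tau\|_\infty\|D\bar f\|_\infty S')$. The critical point is that Assumption~(b) provides the tightness of $\dot v^\epsilon$ and not merely the convergence of $v^\epsilon$: this is exactly what allows the fast oscillations of $\int D_1F$ to cancel against the slowly-enough-varying $u^\epsilon$.
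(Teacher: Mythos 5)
Your argument is correct and lands essentially where the paper's does, but the routing is slightly different, and the difference is worth pointing out. The paper introduces the \emph{two} residuals
\[
a^\epsilon_t:=(\mathfrak a_\epsilon/\epsilon)\tilde e_t^\epsilon-v_t^\epsilon-(\mathfrak a_\epsilon/\epsilon)\int_0^t\bigl(D_1F(w_s,T^{\lfloor s/\epsilon\rfloor})+\tau\circ T^{\lfloor s/\epsilon\rfloor}D\overline f(w_s)\bigr)\tilde e_s^\epsilon\,ds
\]
and
\[
b^\epsilon_t:=\tilde y_t^\epsilon-v_t^\epsilon-\int_0^t\bigl(D_1F(w_s,T^{\lfloor s/\epsilon\rfloor})+\tau\circ T^{\lfloor s/\epsilon\rfloor}D\overline f(w_s)\bigr)\tilde y_s^\epsilon\,ds=-\int_0^tD_1F(w_s,T^{\lfloor s/\epsilon\rfloor})\tilde y_s^\epsilon\,ds\,,
\]
shows each is $o_P(1)$, and then runs Gr\"onwall on the difference $u^\epsilon-\tilde y^\epsilon=a^\epsilon-b^\epsilon+\int_0^t(D_1F+\tau D\overline f)(u^\epsilon_s-\tilde y^\epsilon_s)\,ds$ with the \emph{full} bounded kernel. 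Crucially, the discretization against $\dot v^\epsilon$ (the step that uses $\epsilon/\mathfrak a_\epsilon\to 0$) is applied in $b^\epsilon$ to $\tilde y^\epsilon$, whose tightness in $\mathcal C([0,S'])$ is already available from Proposition~\ref{propconvyeps}; so the paper never needs a modulus-of-continuity estimate for $u^\epsilon$, only the a priori $\sup$-bound~\eqref{majosupeeps}.

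You instead apply that same discretization-against-$\dot v^\epsilon$ trick to $u^\epsilon$ itself, absorbing the whole term $\int_0^t D_1F(w_s,T^{\lfloor s/\epsilon\rfloor})u^\epsilon_s\,ds$ into the $o_P(1)$, and then compare to the defining equation of $\tilde y^\epsilon$ with the \emph{reduced} kernel $\tau D\overline f$ only. This works, but you have to pay for it by first establishing tightness of $(u^\epsilon)_\epsilon$ in $\mathcal C([0,S'],\mathbb R^d)$ (both the sup bound and the modulus of continuity): your estimate $\boldsymbol\omega_{[0,S']}(u^\epsilon,\eta)\le\boldsymbol\omega_{[0,S']}(v^\epsilon,\eta)+2(\mathfrak a_\epsilon/\epsilon)\sup|c^\epsilon|+C\eta\sup|u^\epsilon|$ is correct and does the job, using Assumption~(b), Lemma~\ref{controleGeps}, and the Gr\"onwall $\sup$-bound. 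The Taylor remainders and the $\tilde w^\epsilon-w$ replacements are controlled exactly as in the paper, via Lemma~\ref{wtilde-w} and $\epsilon/\mathfrak a_\epsilon\to 0$. Net: your version is a touch more self-contained (it does not invoke Proposition~\ref{propconvyeps} for tightness), at the cost of an extra modulus-of-continuity estimate for $u^\epsilon$ that the paper bypasses. Both are valid; the underlying mechanism, rewriting $\int_{k\eta}^{(k+1)\eta}D_1F(w_s,T^{\lfloor s/\epsilon\rfloor})\,ds=(\epsilon/\mathfrak a_\epsilon)[\dot v^\epsilon_{(k+1)\eta}-\dot v^\epsilon_{k\eta}]$ and letting $\epsilon/\mathfrak a_\epsilon\to 0$ win against a tight multiplicand, is identical.
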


\begin{proof}
As in the proof for the discrete time dynamics (see \cite[Lemma 7.2]{MP24discrete}), we introduce the following quantities
\begin{align*}
a^\epsilon_t(x,\omega)&:=
(\mathfrak a_\epsilon/\epsilon)\tilde e_t^\epsilon(x,\omega)- v_t^\epsilon(x,\omega)\\
&-
(\mathfrak a_\epsilon/\epsilon)\int_0^t\left(D_1F(w_s(x),T^{\lfloor \frac s \epsilon\rfloor}(\omega))+\tau\circ T^{\lfloor s/\epsilon\rfloor}(\omega)D\overline f(w_s(x))\right)\tilde e_s^\epsilon(x,\omega)ds.\\
\end{align*}
Using the definition of $\tilde e_t^\epsilon$ (see equation \eqref{eqbirkper}) and the Taylor expansion, we obtain,
\begin{align}
|a^\epsilon_t
|&\le 
(\mathfrak a_\epsilon/\epsilon)\left|c^\epsilon_t
\right|
+
(\mathfrak a_\epsilon/\epsilon)\left|\int_0^t\left(F\left(\tilde x_s^\epsilon
,T^{\lfloor \frac s \epsilon \rfloor}
\right)-F\left(\tilde w_s^\epsilon
,T^{\lfloor \frac s \epsilon \rfloor}
\right) \right.\right.\nonumber\\
&+\tau\circ T^{\lfloor s/\epsilon\rfloor}
\left(\overline f(\tilde x_s^\epsilon
)-\overline f(\tilde w_s^\epsilon
)\right)\nonumber\\
&\left. -\left(D_1F(w_s
,T^{\lfloor \frac s \epsilon\rfloor}
)+\tau\circ T^{\lfloor s/\epsilon\rfloor}
D\overline f(w_s
)\right)\tilde e_s^\epsilon
\, ds\right|\nonumber\\
&\leq 
(\mathfrak a_\epsilon/\epsilon)|c^\epsilon_t
|\label{eqrougeajout}\\
&+ 
(\mathfrak a_\epsilon/\epsilon)\int_0^T\left|\|D_1^2F\|_{\infty}+\|\tau\|_\infty\|D_1^2\overline f\|_\infty)\right|\left(\tilde e_s^\epsilon
\right)^2\, ds\label{eqconva0}\\
&+
(\mathfrak a_\epsilon/\epsilon)\left|\int_0^t \tau\circ T^{\lfloor s/\epsilon\rfloor}
\left(D\overline f(\tilde w_s^\epsilon)-D\overline f(w_s)\right)\tilde e_s^\epsilon ds\right|\label{eqconva}\\
&+
(\mathfrak a_\epsilon/\epsilon)\left|\int_0^t \left(D_1 F(\tilde w_s^\epsilon,T^{\lfloor \frac s \epsilon \rfloor}
)-D_1F(w_s,T^{\lfloor \frac s \epsilon \rfloor}
)\right)\tilde e_s^\epsilon ds\right|\, .\label{eqholder2}
\end{align}
Lemma~\ref{controleGeps} ensures that the supremum over $t\in[0;S']$ of \eqref{eqrougeajout} converges in probability with respect to $\overline\mu$ to $0$.
Terms \eqref{eqconva} and \eqref{eqholder2} can be treated together as follows. For any $t\in[0;S']$, 
\begin{align}
\eqref{eqconva}+\eqref{eqholder2}
&\leq S'\left(\|\tau\|_\infty\|D^2\overline f\|_\infty
   +\Vert D_1^2F\Vert_\infty\right)\sup_{s\in[0;S']}|\tilde w_s^\epsilon -w_s|
\sup_{t \in [0,S']}\left|\frac{\tilde e_t^\epsilon(x,.)}{\epsilon
/\mathfrak a_\epsilon}\right|\, .\label{AAAA1}
\end{align}
But, it follows from~\eqref{eqbirkper} that
\[
(\mathfrak a_\epsilon/\epsilon)|\widetilde e_t^\epsilon|\le \left|v_t^\epsilon+
(\mathfrak a_\epsilon/\epsilon)c^\epsilon_t(x,\omega)\right|+
(\mathfrak a_\epsilon/\epsilon)\int_0^t \left(\Vert D_1F\Vert_\infty+\Vert\tau\Vert_\infty\Vert D\overline f\Vert_\infty\right)|\widetilde e_s^\epsilon|\, ds\, ,
\]
which combined with the Gr\"onwall lemma leads to
\begin{equation}\label{majosupeeps}
\sup_{t\in[0;S']}
(\mathfrak a_\epsilon/\epsilon)|\widetilde e_t^\epsilon|\le \sup_{t\in[0;S']}(|v_t^\epsilon|+
(\mathfrak a_\epsilon/\epsilon)
|c^\epsilon_t(x,\omega)|)e^{
S'\left(\Vert D_1F\Vert_\infty+\Vert\tau\Vert_\infty\Vert D\overline f\Vert_\infty\right)}\, .
\end{equation}
Thus, according to Lemmas~\ref{controleGeps} and~\ref{wtilde-w}, and using the fact that $\epsilon=\mathcal O(\mathfrak a_\epsilon)$ and to the convergence of $v^\epsilon$, it follows from~\eqref{AAAA1} and~\eqref{majosupeeps} that the supremum over $t\in[0;S']$ of the sum of the terms~\eqref{eqconva} and~\eqref{eqholder2} converges in distribution to $0$.\\
Inequality~\eqref{majosupeeps} also implies that
the supremum over $t\in[0;S']$ of the term~\eqref{eqconva0} also goes to $0$.
We conclude that
\begin{align}\label{bornep}
\sup_{t \in [0,S']}&\left|a_t^\epsilon(x,
\cdot
)\right|\rightarrow 0,\quad\mbox{in probability with respect to }\overline\mu\, .
\end{align}

Define
\begin{align*}
&b_t^\epsilon(x,\omega):=\tilde y_t^\epsilon(x,\omega)- v_t^\epsilon(x,\omega)-\int_0^t \left(D_1 F(w_s(x),T^{\lfloor \frac s \epsilon \rfloor}(\omega))+\tau\circ T^{\lfloor s/\epsilon\rfloor}(\omega)D\overline f(w_s(x))\right)\widetilde y_s^\epsilon(x,\omega)\, ds\nonumber\\
&=-\int_0^t D_1 F(w_s(x),T^{\lfloor \frac s \epsilon \rfloor}(\omega))\widetilde y_s^\epsilon(x,\omega)\, ds\nonumber\, .
\end{align*}
Thus
\begin{align*}
&\sup_{t\in[0;S']}|b_t^\epsilon
|\le \sum_{k=0}^{S'/\eta}\left|\int_{k\eta}^{(k+1)\eta} D_1 F(w_s
,T^{\lfloor \frac s \epsilon \rfloor}
)\widetilde y_s^\epsilon
\, ds\right|+\eta\Vert D_1F\Vert_\infty\sup_{
[0;S']}|\widetilde y
^\epsilon
|\\
&\le \sum_{k=0}^{S'/\eta}\left|y_{k\eta}^\epsilon\int_{k\eta}^{(k+1)\eta} D_1 F(w_s
,T^{\lfloor \frac s \epsilon \rfloor}
)\, ds\right|+\Vert D_1F\Vert_\infty
\left(
\eta\sup_{s\in[0;S']}|\widetilde y_s^\epsilon
|+S'\boldsymbol{\omega}_{[0;S']}(\widetilde y^\epsilon
,\eta)
\right)\, .
\end{align*}
Thus, 
\begin{align*}
&\sup_{t\in[0;S']}|b_t^\epsilon
\le \sum_{k=0}^{S'/\eta}\left|y_{k\eta}^\epsilon(\epsilon/\mathfrak a_\epsilon)(\dot v_{(k+1)\eta}-\dot v_{k\eta})\right|+\Vert D_1F\Vert_\infty
\left(
\eta\sup_{s\in[0;S']}|\widetilde y_s^\epsilon
|+S'\boldsymbol{\omega}_{[0;S']}(\widetilde y^\epsilon
,\eta)
\right)\, ,
\end{align*}
where we used the definition of $\dot v$ given in Assumption~(b) of Theorem~\ref{thm:mainflow}.\\
Let $\eta_0>0$. Using the convergence in distribution of $(\widetilde y^\epsilon)_\epsilon$ from Proposition \ref{propconvyeps}
combined with Fact~\ref{continuitymodulus}, we fix $\eta>0$ and $\epsilon_1>0$ such that, for all $\epsilon\in(0;\epsilon_1]$,
\[
\overline\mu\left(
\Vert D_1F\Vert_\infty\left(\eta\sup_{s\in[0;S']}|\widetilde y_s^\epsilon(x,\cdot)|+S'\boldsymbol{\omega}_{[0;S']}(\widetilde y^\epsilon(x,\cdot),\eta)\right)>\frac{\eta_0}2
\right)<\frac{\eta_0}2\, .
\]
Finally, 
using the fact that $\epsilon=o(\mathfrak a_\epsilon)$, and that 
the continuous processes $(\widetilde y^\epsilon_s(x,\cdot))_{s\in[0;S']}$
and $(\dot v_t^\epsilon(x,\cdot)
)_{t\in[0;S']}$ are both 
tight, we fix 
$\epsilon_0\in(0,\epsilon_1)$ such that, for all $\epsilon\in(0;\epsilon_0]$, 
\[
\overline\mu\left(
\sum_{k=0}^{S'/\eta}\left|
\tilde y_{k\eta}^\epsilon(x,\cdot)
(\epsilon/\mathfrak a_\epsilon)(\dot v_{(k+1)\eta}-\dot v_{k\eta})\right|>\frac{\eta_0}2\right)<\frac{\eta_0}2\, .
\]
We conclude that
\begin{align}\label{borneppourb}
\sup_{t \in [0,S']}&\left|b_t^\epsilon(x,
\cdot
)\right|\rightarrow 0,\quad\mbox{in probability with respect to }\overline\mu\, .
\end{align}
Since 
\begin{align*}
&
(\mathfrak a_\epsilon/\epsilon)\tilde e_t^\epsilon(x,\omega)-\widetilde y_t^\epsilon(x,\omega)=a_t^\epsilon(x,\omega)-b_t^\epsilon(x,\omega)\\
&+\int_0^tD_1F(w_s(x),T^{\lfloor \frac s \epsilon \rfloor}(\omega)+\tau\circ T^{\lfloor s/\epsilon\rfloor}D\overline f(w_s(x)))(\epsilon^{-\frac 3 4}e_s^\epsilon(x,\omega)-\widetilde y_s^\epsilon(x,\omega))ds,
\end{align*}
we obtain using Grönwall lemma
\begin{align*}
\left|
(\mathfrak a_\epsilon/\epsilon)\tilde e_t^\epsilon(x,\omega)-y_t^\epsilon(x,\omega)\right| \leq \sup_{u\in [0,S']}\left|a_u^\epsilon(x,\omega)-b_u^\epsilon(x,\omega)\right|e^{S'\|D_1F\|_{\infty}+\|\tau\|_\infty\|D\overline f\|_\infty}.
\end{align*}
The result follows from this inequality combined with~\eqref{bornep} and~\eqref{borneppourb}.
\end{proof}
\begin{prop}
For every $x\in\mathbb R^d$, as $\varepsilon\rightarrow 0$
the process $((\mathfrak a_\epsilon/\epsilon)\tilde e_t^\epsilon(x,\cdot))_{t\in[0;S']}$ converges in distribution with respect to $\overline\mu$ in $\mathcal C([0,S'])$, to $g(y_t(x,\cdot))_{t\in[0;S']}$
defined in Proposition~\ref{propconvyeps}.
\end{prop}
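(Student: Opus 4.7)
The plan is to combine the two previous results, Proposition~\ref{propconvyeps} and Lemma~\ref{lemmajoreteps}, through a straightforward Slutsky-type argument. Proposition~\ref{propconvyeps} gives the convergence in distribution of the auxiliary process $(\tilde y^\epsilon_t(x,\cdot))_{t\in[0,S']}$ to the limit process $(y_t(x,\cdot))_{t\in[0,S']}$ with respect to $\overline\mu$ on $\mathcal C([0,S'])$ equipped with the uniform metric. Lemma~\ref{lemmajoreteps} shows that $\sup_{t\in[0,S']}|(\mathfrak a_\epsilon/\epsilon)\tilde e^\epsilon_t(x,\cdot) - \tilde y^\epsilon_t(x,\cdot)|$ converges to $0$ in $\overline\mu$-probability. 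Since the uniform distance between the two processes vanishes in probability and one of them converges in distribution, the classical Slutsky lemma for random variables valued in a metric space (see \cite[Theorem~3.1]{billingsley2}) yields that $((\mathfrak a_\epsilon/\epsilon)\tilde e^\epsilon_t(x,\cdot))_{t\in[0,S']}$ also converges in distribution to $(y_t(x,\cdot))_{t\in[0,S']}$ in $(\mathcal C([0,S']),\|\cdot\|_\infty)$.

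More precisely, I would argue as follows. Fix any bounded Lipschitz function $\Phi:\mathcal C([0,S'])\to\mathbb R$ with Lipschitz constant $L$. Then
\[
\bigl|\mathbb E_{\overline\mu}\Phi((\mathfrak a_\epsilon/\epsilon)\tilde e^\epsilon(x,\cdot)) - \mathbb E_{\overline\mu}\Phi(\tilde y^\epsilon(x,\cdot))\bigr|
\le L\,\mathbb E_{\overline\mu}\bigl[\min(1,\sup_{t\in[0,S']}|(\mathfrak a_\epsilon/\epsilon)\tilde e^\epsilon_t(x,\cdot)-\tilde y^\epsilon_t(x,\cdot)|)\bigr] + 2\|\Phi\|_\infty\,\eta,
\]
for any $\eta>0$ and $\epsilon$ small enough, using the bounded convergence theorem applied to the uniform gap which tends to $0$ in probability by Lemma~\ref{lemmajoreteps}. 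By Proposition~\ref{propconvyeps}, $\mathbb E_{\overline\mu}\Phi(\tilde y^\epsilon(x,\cdot))$ converges to $\mathbb E\Phi(y(x,\cdot))$. The portmanteau theorem for Polish-valued random variables then implies the announced convergence in distribution.

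To complete the proof of Theorem~\ref{thm:mainflow}, it remains to transfer this limit to the original error process $E^\epsilon_t(x,\cdot):=X^\epsilon_t(x,\cdot)-W_t(x)$. For this one invokes the uniform approximations~\eqref{eqxtild} and~\eqref{eqwtilde}, which ensure
\[
\sup_{t\in[0,S]}\bigl|E^\epsilon_t(x,\omega,\ell,s) - \tilde e^\epsilon_{\epsilon n_{t/\epsilon}(\omega)}(x,\omega,\ell)\bigr|\le 2C\epsilon,
\]
and then performs the random time change $t\mapsto \epsilon n_{t/\epsilon}$, which converges in probability to $t/\overline\tau$ by Assumption~(c) of Theorem~\ref{thm:mainflow}. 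Combining this with continuity of the limit process $y$ and once more a Slutsky-type argument (for composition with a continuous random time change, see again \cite[Section~14]{billingsley2}) yields the convergence of $((\mathfrak a_\epsilon/\epsilon)E^\epsilon_t(x,\cdot))_{t\in[0,S]}$ to $(y_{t/\overline\tau}(x,\cdot))_{t\in[0,S]}$, which is the announced limit $Y_t(x)$. The main technical obstacle is not in the Slutsky step itself but in the preceding passage from $\overline\mu$ to arbitrary absolutely continuous probability measures with respect to $\nu$ (strong convergence in distribution); this relies on the fact that $\overline\mu$ was arbitrary absolutely continuous with respect to $\mu$ throughout the argument, together with the invariance properties underlying the lift from $M$ to $\mathcal M$.
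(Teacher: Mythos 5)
Your argument is correct and takes the same route as the paper: the proposition is obtained by combining Proposition~\ref{propconvyeps} (convergence in distribution of $\tilde y^\epsilon$ to $y$) with Lemma~\ref{lemmajoreteps} (uniform closeness of $(\mathfrak a_\epsilon/\epsilon)\tilde e^\epsilon$ and $\tilde y^\epsilon$ in probability) via Slutsky's lemma. The additional material you include on transferring to the original error $E^\epsilon_t$ via the random time change and Zweim\"uller's theorem goes beyond the present statement; it belongs to the subsequent steps of the proof of Theorem~\ref{thm:mainflow}, which the paper handles in the propositions that follow.
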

\begin{proof}
This follows directly from Proposition~\ref{propconvyeps}, combined with Lemma~\ref{lemmajoreteps}.
\end{proof}
\begin{lem}\label{lemmup}
Fix $x\in\mathbb R^d$. 
The sequence $(((\mathfrak a_\epsilon/\epsilon)\tilde e_{\epsilon n_{t/\epsilon}}^\epsilon(x,\cdot))_{t\geq 0})_{\epsilon >0}$ in the space $D([0,S])$ converges strongly in distribution with respect to $\mu$
towards $(y_{t/\overline \tau}(x,\cdot))_{t\geq 0}$.
\end{lem}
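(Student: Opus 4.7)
The plan is to obtain Lemma~\ref{lemmup} from the previous Proposition by a random change of time, using Assumption~(c) of Theorem~\ref{thm:mainflow} which gives uniform control on $\epsilon n_{t/\epsilon}$. Fix an arbitrary probability measure $\overline\mu$ absolutely continuous with respect to $\mu$. I will abbreviate $\tilde e^\epsilon_s := (\mathfrak a_\epsilon/\epsilon)\tilde e^\epsilon_s(x,\cdot)$ and $\theta^\epsilon(t) := \epsilon n_{t/\epsilon}$.

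Recall $S' = S/\inf\tau$. The previous Proposition yields that $(\tilde e^\epsilon_s)_{s\in[0,S']}$ converges in distribution (w.r.t. $\overline\mu$) to the continuous process $(y_s(x,\cdot))_{s\in[0,S']}$ for the uniform metric. Assumption~(c) of Theorem~\ref{thm:mainflow} yields that $(\theta^\epsilon(t))_{t\in[0,S]}$ converges in $\overline\mu$-probability, uniformly on $[0,S]$, to the deterministic continuous function $t\mapsto t/\overline\tau$. Since $\overline\tau\geq \inf\tau$, the range of $t\mapsto t/\overline\tau$ on $[0,S]$ is contained in $[0,S/\overline\tau]\subset[0,S']$, so on a set of arbitrarily high probability the range of $\theta^\epsilon$ stays well inside $[0,S']$.

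The core of the argument is then a standard composition/random time change: apply \cite[Theorem 3.9 and Section 14]{billingsley2} (already invoked in the proof of Corollary~\ref{corbillard}), which gives the joint convergence of the pair $(\tilde e^\epsilon, \theta^\epsilon)$ to $(y, \theta)$ and, by continuity of the composition map on paths whose time-change limit is a continuous deterministic function, the convergence in distribution under $\overline\mu$ of $(\tilde e^\epsilon_{\theta^\epsilon(t)})_{t\in[0,S]}$ to $(y_{t/\overline\tau})_{t\in[0,S]}$ in $(\mathcal C([0,S]),\|\cdot\|_\infty)$. I would make this rigorous via a Skorokhod representation on a common probability space, where $\tilde e^\epsilon\to y$ almost surely for the uniform norm on $[0,S']$ and $\theta^\epsilon\to \theta$ almost surely uniformly on $[0,S]$; the triangle inequality
\[
\sup_{t\in[0,S]}\bigl|\tilde e^\epsilon_{\theta^\epsilon(t)}-y_{t/\overline\tau}\bigr| \leq \sup_{s\in[0,S']}|\tilde e^\epsilon_s-y_s| + \boldsymbol{\omega}_{[0,S']}\bigl(y,\sup_{t\in[0,S]}|\theta^\epsilon(t)-t/\overline\tau|\bigr)
\]
then vanishes almost surely using continuity of $y$ and the uniform convergence of $\theta^\epsilon$.

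Finally, strong convergence in distribution with respect to $\mu$ is obtained by noting that $\overline\mu$ was an arbitrary probability measure absolutely continuous with respect to $\mu$: the previous Proposition, Assumption~(c), and therefore the argument above are all valid for every such $\overline\mu$, which is precisely the required definition. I do not expect any real obstacle here: the only subtle point is checking that $\theta^\epsilon$ remains inside $[0,S']$ with high probability (handled above using $\overline\tau\geq \inf\tau$) so that the convergence of $\tilde e^\epsilon$ on $[0,S']$ can be transferred to the composed process on $[0,S]$.
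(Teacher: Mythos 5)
Your first step (the random time change to go from $(\tilde e_s^\epsilon)_{s\in[0,S']}$ to $(\tilde e^\epsilon_{\epsilon n_{t/\epsilon}})_{t\in[0,S]}$ via \cite[Theorem~3.9, Section~14]{billingsley2}) matches the paper exactly. The gap is in how you upgrade to \emph{strong} convergence in distribution with respect to $\mu$. You write that the previous Proposition, Assumption~(c), and therefore the time-change argument ``are all valid for every such $\overline\mu$,'' but this is not given: Theorem~\ref{thm:mainflow} fixes a single probability measure $\overline\mu\ll\mu$ at the outset, and Assumptions~(b), (c), (d) — and hence Proposition~\ref{propconvyeps}, Lemma~\ref{lemmajoreteps}, and the unnamed Proposition just before Lemma~\ref{lemmup} — are all stated and proved only with respect to that one $\overline\mu$. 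You cannot simply re-run the argument for another absolutely continuous initial measure, because you have no distributional limit theorems for $v^\epsilon$, $\dot v^\epsilon$, $z^\epsilon$, $\dot z^\epsilon$ under that other measure.

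The paper's solution, which you are missing, is Zweim\"uller's theorem \cite[Theorem~1.1]{zweimuller}: convergence in distribution under one absolutely continuous probability measure can be upgraded to strong convergence (i.e.\ convergence under every absolutely continuous probability measure) for infinite measure preserving systems, provided one verifies an asymptotic $T$-invariance condition. Concretely, the paper checks that
\[
(\mathfrak a_\epsilon/\epsilon)\sup_{t\in[0,S]}\bigl|\tilde e_{\epsilon n_{t/\epsilon}(\cdot)}^\epsilon(x,\cdot)-\tilde e_{\epsilon n_{t/\epsilon}(\cdot)}^\epsilon(x,T(\cdot))\bigr|\longrightarrow 0
\]
pointwise, which is established by a Gr\"onwall estimate using the semigroup identity $\tilde x_t^\epsilon(x,\omega)=\tilde x_{t-\epsilon}^\epsilon(\tilde x_\epsilon^\epsilon(x,\omega),T\omega)$ and the boundedness of $F$, $\tau$, $\overline f$ to show the difference is $\mathcal O(\epsilon)$, after which $\mathfrak a_\epsilon\to 0$ kills the prefactor. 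This verification is the actual content of the strong-convergence part of the lemma, and it is entirely absent from your proposal.
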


\begin{proof}
We first prove the convergence of $((\mathfrak a_\epsilon/\epsilon)(\tilde e_{\epsilon n_{t/\epsilon}}^\epsilon(x,\cdot))_{t\geq 0})_{\epsilon >0}$ 
with respect to $\overline\mu$.
Assumption (c) of Theorem~\ref{thm:mainflow} ensures that $(\epsilon n_{\frac{t}{\epsilon}})_{t\in (0,S]}$ converges
in probability, with respect to $\mup$,
to $(\frac{t}{\overline \tau})_{t\in (0,S]}$. Since $(\tilde e_t^\epsilon(x,\cdot))_{t\geq 0}$ converges in distribution for $\mup$ in $D[0,\frac{S}{\overline \tau}]$ when $\epsilon$ tends to $0$. \cite[Theorem 3.9, Section 14]{billingsley2} applied to the couple $(((\mathfrak a_\epsilon/\epsilon)\tilde e_t^\epsilon(x,\cdot))_{t\geq 0}, \epsilon n_{\frac{t}{\epsilon}})$ ensures the convergence of $(((\mathfrak a_\epsilon/\epsilon)\tilde e_{\epsilon n_{t/\epsilon}}^\epsilon(x,\cdot))_{t> 0})_{\epsilon >0}$ to $(y_{t/\overline \tau}(x,\cdot))_{t> 0}$.\\

In order to apply Zweim\"uller's theorem \cite[Theorem 1.1]{zweimuller}, we check that $(\mathfrak a_\epsilon/\epsilon)\sup_{t\in[0,S]}\|\tilde e_{\epsilon n_{t/\epsilon}(\cdot)}^\epsilon(x,\cdot)-\tilde e_{\epsilon n_{t/\epsilon}(\cdot)}^\epsilon(x,T(\cdot))\|_\infty$ converges pointwise
to 0. To this end, it is enough to check that
\begin{align}\label{eqetzweim}
(\mathfrak a_\epsilon/\epsilon)\sup_{t\in[0,S']}|\tilde e_t^\epsilon(x,\omega)-\tilde e_t^\epsilon(x,T(\omega))|\xrightarrow[\epsilon \rightarrow 0]{} 0.
\end{align}

Notice that for any $(t,\omega)\in [0,S']\times M$, 
\begin{align}
|\tilde e_t^\epsilon(x,\omega)-\tilde e_t^\epsilon(x,T(\omega))|&\leq |\tilde x_t^\epsilon(x,\omega)-\tilde x_t^\epsilon(x,T(\omega))|\label{eqxgron}\\
&+|\tilde w_t^\epsilon(x,\omega)-\tilde w_t^\epsilon(x,T(\omega))|\label{eqwgron}. 
\end{align}
The terms \eqref{eqxgron} and \eqref{eqwgron} are treated the same way through the Gr\"onwall inequality, thus we only expose the case of \eqref{eqxgron} :\\
Notice that $\tilde x_t^\epsilon(x,\omega)=\tilde x_{t-\epsilon}^\epsilon(\tilde x_{\epsilon}^\epsilon(x,\omega),T(\omega))$. Thus \eqref{eqxgron} can be bounded by
\begin{align}
|\tilde x_t^\epsilon(\omega)-\tilde x_t^\epsilon(x,T(\omega))|&\leq |\tilde x_{t-\epsilon}^\epsilon(\tilde x_{\epsilon}^\epsilon(x,\omega),T(\omega))-\tilde x_{t-\epsilon}^\epsilon(x,T(\omega))|\label{eqzzz}\\
&+|\tilde x_t^\epsilon(x,T(\omega))-\tilde x_{t-\epsilon}^\epsilon(x,T(\omega))|\label{eqzz2}.
\end{align} 
Applying the Gr\"onwall lemma on \eqref{eqzzz}, 
\begin{align*}
\|\tilde x_{t-\epsilon}^\epsilon(\tilde x_{\epsilon}^\epsilon(x,\omega),T(\omega))-\tilde x_{t-\epsilon}^\epsilon(x,T(\omega))\|_\infty\leq K|x-\tilde x_{\epsilon}^\epsilon(x,\omega)|
=\mathcal O(\varepsilon).
\end{align*}
The equation \eqref{eqzz2} is $\mathcal O(\varepsilon)$ since $\left|\frac{d\widetilde x_t^\epsilon}{dt}\right|\le \Vert F\Vert_\infty+\Vert\tau\Vert_\infty\Vert \overline f\Vert_\infty$.
Since $\lim_{\epsilon\rightarrow 0}\mathfrak a_\varepsilon=0$, we conclude that \eqref{eqetzweim} is satisfied and Zweim\"uller's Theorem \cite[Theorem 1.1]{zweimuller} applies ensuring the strong convergence in distribution with respect to $\mu$ of $((\tilde e_{\epsilon n_{t/\epsilon}}^\epsilon(x,\cdot))_{t\geq 0})_{\epsilon >0}$ towards $(y_{t/\overline \tau}(x,\cdot))_{t\geq 0}$.
\end{proof}

\begin{prop}
For all $x\in\mathbb R^d$, the family of processes $
((\mathfrak a_\epsilon/\epsilon)E_t^\epsilon(x,\cdot))_{t\in [0,S]})_\epsilon$ strongly converges in distribution
with respect to $\nu$
to $(Y_t(x,\cdot):=y_{t/\overline\tau}(x,\cdot))_{t\in[0;S]}$, which is given by
\begin{align*}
Y_t(x,\cdot)&:=
 y_{t/\overline\tau}(x,\cdot)= v_{t/\overline\tau} (x,\cdot)+\int_0^{t/\overline\tau} \overline\tau  D \overline f (w_s(x))_s y_s(x,\cdot) \, ds\\
 &=V_{t}(x,\cdot) +\int_0^{t}   D \overline f (W_s(x))_s Y_{s}(x,\cdot)\,  ds\, ,
\end{align*}
with $V_t:=v_{t/\overline\tau}$.
\end{prop}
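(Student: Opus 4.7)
The plan is to reduce the final proposition to Lemma~\ref{lemmup}. This requires two ingredients: first, replacing $E_t^\epsilon$ by its discrete-time approximation $\tilde e_{\epsilon n_{t/\epsilon}}^\epsilon$ by means of the uniform bounds~\eqref{eqxtild} and~\eqref{eqwtilde}; and second, transferring the strong convergence in distribution from the measure $\mu$ on $M$ (given by Lemma~\ref{lemmup}) to the measure $\nu$ on the suspension space $\mathcal M$.

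For the first reduction, I would apply the triangle inequality together with~\eqref{eqxtild} and~\eqref{eqwtilde} to obtain, for every $x \in \mathbb R^d$,
\[
\sup_{t\in[0,S]}\,\sup_{s\in[0,\tau(\omega))} \left|\frac{\mathfrak a_\epsilon}{\epsilon}E_t^\epsilon(x,\omega,l,s) - \frac{\mathfrak a_\epsilon}{\epsilon}\,\tilde e_{\epsilon n_{t/\epsilon}(\omega)}^\epsilon(x,\omega,l)\right| \le 2C\mathfrak a_\epsilon,
\]
which vanishes as $\epsilon\to 0$ since $\mathfrak a_\epsilon\to 0$. By Slutsky's lemma, it therefore suffices to establish strong convergence in distribution of the simpler process $t\mapsto (\mathfrak a_\epsilon/\epsilon)\,\tilde e_{\epsilon n_{t/\epsilon}(\omega)}^\epsilon(x,\omega,l)$ with respect to $\nu$ in $(\mathcal C([0,S]),\|\cdot\|_\infty)$.

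For the second reduction, the key observation is that this approximating process depends only on the base coordinate $(\omega,l)\in M$ and not on the fibre coordinate $s\in[0,\tau(\omega))$. Let $\pi:\mathcal M\to M$ denote the canonical projection $(\omega,l,s)\mapsto(\omega,l)$. By~\eqref{boundtau}, the pushforward measure $\pi_*\nu$ equals $\tau\cdot\mu$, which is equivalent to $\mu$ with Radon--Nikodym derivative bounded from above and away from zero. Consequently, for any probability measure $P\ll\nu$ on $\mathcal M$, the pushforward $\pi_*P$ is a probability measure absolutely continuous with respect to $\mu$, and for every bounded continuous functional $\Phi$ on $\mathcal C([0,S])$,
\[
\int_{\mathcal M}\Phi\!\left(\frac{\mathfrak a_\epsilon}{\epsilon}\tilde e_{\epsilon n_{\cdot/\epsilon}}^\epsilon(x,\cdot)\right) dP = \int_{M}\Phi\!\left(\frac{\mathfrak a_\epsilon}{\epsilon}\tilde e_{\epsilon n_{\cdot/\epsilon}}^\epsilon(x,\cdot)\right) d(\pi_*P).
\]
Applying Lemma~\ref{lemmup} to the absolutely continuous probability measure $\pi_*P$ shows that the right-hand side converges to $\mathbb E[\Phi((y_{t/\overline\tau}(x,\cdot))_{t\in[0,S]})]$, which is exactly the desired strong convergence in distribution with respect to $\nu$.

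Finally, the second form of $Y_t$ follows by a direct change of variable $u=\overline\tau s$ in the integral equation defining $y_{t/\overline\tau}$, together with the relation $W_s=w_{s/\overline\tau}$, giving $Y_t = V_t + \int_0^t D\overline f(W_u(x)) Y_u\,du$ with $V_t=v_{t/\overline\tau}$. I expect no major obstacle here, as all the substantive analytic work (tightness of $(v^\epsilon)$ and $(\dot v^\epsilon)$, control of the remainders $a^\epsilon$ and $b^\epsilon$, and identification of the limit in the fixed-point equation) has been carried out in Proposition~\ref{propconvyeps} and Lemmas~\ref{lemmajoreteps} and~\ref{lemmup}; the only delicate point is the disintegration argument that extends the convergence from $\mu$ to $\nu$, which relies crucially on the $(\omega,l)$-measurability of the approximating process and on the boundedness of the roof function $\tau$.
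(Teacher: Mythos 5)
Your proof is correct and follows essentially the same route as the paper: approximate $(\mathfrak a_\epsilon/\epsilon)E^\epsilon$ by $(\mathfrak a_\epsilon/\epsilon)\tilde e^\epsilon_{\epsilon n_{t/\epsilon}}$ uniformly (combining~\eqref{eqxtild} and~\eqref{eqwtilde} and the fact that $\mathfrak a_\epsilon\to 0$), observe that the approximant factors through the projection $\pi:\mathcal M\to M$, and transfer the strong convergence from $\mu$ to $\nu$ by pushing forward an arbitrary $P\ll\nu$ and applying Lemma~\ref{lemmup}. The only minor refinement you add is making the disintegration $\pi_*\nu=\tau\cdot\mu$ explicit, whereas the paper merely invokes that $\Pi_*\mathbb P\ll\mu$, and you carry out the change of variables for the second form of $Y_t$; neither is a substantive difference.
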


\begin{proof}
Let $x\in\mathbb R^d$ and $\mathbb{P}$ be a probability measure over $\mathcal{M}$ absolutely continuous with respect to $\nu$. Let us write $\Pi:\mathcal M\rightarrow M$ for the canonical projection given by $\Pi(\omega,s)=\omega$. Since $\Pi_*\mathbb P$ is
absolutely continuous with respect to $\mu$, 
it follows from Lemma~\ref{lemmup} that
$((\mathfrak a_\epsilon/\epsilon)\tilde e^\epsilon_{\epsilon n_{t/\epsilon}(\Pi(\cdot))}(x,\Pi(\cdot))_{t\in[0,S]})_{\epsilon}$ converges in distribution with respect to $\mathbb{P}$ to $(Y_{t}(x,\cdot))_{t\in[0,S]}$. 
Furthermore,~\eqref{eqxtild} and~\eqref{eqwtilde} imply 
 that $\sup_{t\in [0;S],\omega\in\mathcal M}\left|E_t^\epsilon(x,\omega)-\tilde e^\epsilon_{\epsilon n_{t/\epsilon}(\Pi(\omega))}(x,\Pi(\omega))\right|=\mathcal O(\epsilon)$. Since $\lim_{\epsilon\rightarrow 0}\mathfrak a_\epsilon=0$, we conclude that
$((\mathfrak a_\epsilon/\epsilon)(E^\epsilon(x,\Pi(\cdot)))_{t\in[0,S]})_{\epsilon}$ converges in distribution with respect to $\mathbb{P}$ to $(Y_{t}(x,\cdot))_{t\in[0,S]}$.  
\end{proof}

\subsection*{Aknowledgement} This article partly results from my PhD thesis under the supervision of Françoise Pène who initiated this work and provided precious advice and editorial support all along.

\bibliography{biblio_con}

\begin{thebibliography}{10}

\bibitem{anosovslowfast}
D.~V. Anosov.
\newblock Averaging in systems of ordinary differential equations with rapidly
  oscillating solutions.
\newblock {\em Izv. Akad. Nauk SSSR Ser. Mat.}, 24:721--742, 1960.

\bibitem{Anosov67}
D.~V. Anosov.
\newblock Geodesic flows on closed {R}iemannian manifolds of negative
  curvature.
\newblock {\em Trudy Mat. Inst. Steklov.}, 90:209, 1967.

\bibitem{arnoldequadiff}
V.~Arnold.
\newblock {\em Chapitres supplémentaires de la théorie des équations
  différentielles ordinaires}.
\newblock Mir; Éditions du Globe, 1980.

\bibitem{billingsley2}
P.~Billingsley.
\newblock {\em Convergence of probability measures 2nd edition}.
\newblock Wiley Series in Probability and Statistics: Probability and
  Statistics. A Wiley-Interscience Publication, 1999.

\bibitem{sinaiclt}
L.~A. Bunimovich and Ya.~G. Sina\u{\i}.
\newblock Statistical properties of {L}orentz gas with periodic configuration
  of scatterers.
\newblock {\em Comm. Math. Phys.}, 78(4):479--497, 1980/81.

\bibitem{buni_sinai_chernov}
L.~A. Bunimovich, Ya.~G. Sina\u{\i}, and N.~I. Chernov.
\newblock Markov partitions for two-dimensional hyperbolic billiards.
\newblock {\em Uspekhi Mat. Nauk}, 45(3(273)):97--134, 221, 1990.

\bibitem{DSV08}
D.~Dolgopyat, D.~Sz\'asz, and T.~Varj\'u.
\newblock Recurrence properties of lorentz gas.
\newblock {\em Duke Mathematical Journal}, 142:241--281, 2008.

\bibitem{kasminski}
R.~Z. Hasc'~minski\u{\i}.
\newblock Stochastic processes defined by differential equations with a small
  parameter.
\newblock {\em Teor. Verojatnost. i Primenen}, 11:240--259, 1966.

\bibitem{korepanovslowfast}
A.~Korepanov I.~Chevyrev, P. K.~Friz and I.~Melbourne.
\newblock Superdiffusive limits for deterministic fast-slow dynamical systems.
\newblock {\em Probab. Theory Related Fields}, 178(3-4):735--770, 2020.

\bibitem{kifer}
Y.~Kifer.
\newblock Limit theorems in averaging for dynamical systems.
\newblock {\em Ergodic Theory Dynam. Systems}, 15(6):1143--1172, 1995.

\bibitem{Lorentz}
H.~A. Lorentz.
\newblock The motion of electrons in metallic bodies.
\newblock {\em Koninklijke Nederlandse Akademie van Weten schappen (KNAW),
  proceeding of the section of sciences}, 7(2):438--593, 1905.

\bibitem{penergodic}
F.~P\`ene.
\newblock Applications des propri\'{e}t\'{e}s stochastiques du billard
  dispersif.
\newblock {\em C. R. Acad. Sci. Paris S\'{e}r. I Math.}, 330(12):1103--1106,
  2000.

\bibitem{peneESAIMPS}
F.~P{\`e}ne.
\newblock Averaging method for differential equations perturbed by dynamical
  systems.
\newblock {\em ESAIM, Probab. Stat.}, 6:33--88, 2002.

\bibitem{FP23}
F.~P\`ene.
\newblock Limit theorems for birkhoff sums and local times of the periodic
  lorentz gas with infinite horizon.
\newblock page 24 pages, 2023.

\bibitem{PT20}
F.~P\`ene and D.~Thomine.
\newblock Probabilistic potential theory and induction of dynamical systems.
\newblock {\em Ann. Inst. Henri Poincar\'{e} Probab. Stat.}, 57(3):1736--1767,
  2021.

\bibitem{MP22}
M.~Phalempin.
\newblock Limit theorems for self-intersecting trajectories in z-extensions,
  2023.

\bibitem{MP24discrete}
M.~Phalempin.
\newblock Averaging theorems for slow fast systems in $\mathbb{Z}$-extensions
  (discrete time), 2024.

\bibitem{MPphd}
Maxence Phalempin.
\newblock {\em {Th{\'e}or{\`e}mes Limites en mesure infinie :
  auto-intersections et flots perturb{\'e}s moyenn{\'e}s}}.
\newblock Theses, {Universit{\'e} de Bretagne occidentale - Brest}, August
  2022.

\bibitem{phdpene}
F.~Pène.
\newblock {\em Applications des propriétés stochastiques des systèmes
  dynamiques de type hyperbolique : ergodicité du billard dispersif dans le
  plan, moyennisation d'équations différentielles perturbées par un flot
  ergodique}.
\newblock PhD thesis, Université de Rennes I, 2000.

\bibitem{ratner}
M.~Ratner.
\newblock Markov partitions for {A}nosov flows on {$n$}-dimensional manifolds.
\newblock {\em Israel J. Math.}, 15:92--114, 1973.

\bibitem{Schmidt}
K.~Schmidt.
\newblock On recurrence.
\newblock {\em Z. Wahrsch. verw. Gebiete}, 68:75--95, 1984.

\bibitem{Sim89ergobilliard}
N\'{a}ndor Sim\'{a}nyi.
\newblock Towards a proof of recurrence for the {L}orentz process.
\newblock In {\em Dynamical systems and ergodic theory ({W}arsaw, 1986)},
  volume~23 of {\em Banach Center Publ.}, pages 265--276. PWN, Warsaw, 1989.

\bibitem{Sinai}
Ja.~G. Sina\u{\i}.
\newblock Dynamical systems with elastic reflections. {E}rgodic properties of
  dispersing billiards.
\newblock {\em Uspehi Mat. Nauk}, 25(2 (152)):141--192, 1970.

\bibitem{thomine1}
D.~Thomine.
\newblock A generalized central limit theorem in infinite ergodic theory.
\newblock {\em Probab. Theory Relat. Fields}, 158(3-4):597--636, 2014.

\bibitem{thomine2}
D.~Thomine.
\newblock Variations on a central limit theorem in infinite ergodic theory.
\newblock {\em Ergodic Theory Dyn. Syst.}, 35(5):1610–1657, 2015.

\bibitem{Young}
L.S. Young.
\newblock Statistical properties of dynamical systems with some hyperbolicity.
\newblock {\em Ann. of Math. (2)}, 147(3):585--650, 1998.

\bibitem{zweimuller}
R.~Zweim\"{u}ller.
\newblock Mixing limit theorems for ergodic transformations.
\newblock {\em J. Theoret. Probab.}, 20(4):1059--1071, 2007.

\end{thebibliography}
\bibliographystyle{plain}
\end{document}